\newcommand\barrow{\textstyle\mathop{\rightarrow}_{}^{\hspace{-8pt}\bullet}}
\newcommand\arrowb{\textstyle\mathop{\rightarrow}_{\hspace{-8pt}\bullet}^{}}
\newcommand\arrowc{\textstyle\mathop{\rightarrow}_{\hspace{-8pt}\circ}^{}}
\newcommand\barrowc{\textstyle\mathop{\rightarrow}_{\hspace{-8pt}\circ}^{\hspace{-8pt}\bullet}}
\newcommand\carrowb{\textstyle\mathop{\rightarrow}_{\hspace{-8pt}\bullet}^{\hspace{-8pt}\circ}}
\newtheorem{thm}{Theorem}[section]
\newaliascnt{prp}{thm}
\newtheorem{prp}[prp]{Proposition}
\newaliascnt{cor}{thm}
\newtheorem{cor}[cor]{Corollary}
\newaliascnt{lem}{thm}
\theoremstyle{definition}
\newaliascnt{dfn}{thm}
\newtheorem{dfn}[dfn]{Definition}
\newaliascnt{rmk}{thm}
\newtheorem{rmk}[rmk]{Remark}
\newaliascnt{qst}{thm}
\newaliascnt{xpl}{thm}
\numberwithin{equation}{section}
\author{Tristan Bice}
\email{tristan.bice@gmail.com}
\thanks{The author is supported by the GA\v{C}R project EXPRO 20-31529X and RVO: 67985840.}
\address{Institute of Mathematics of the Czech Academy of Sciences, Prague, Czech Republic}
\keywords{domain, distance, hemimetric, quasimetric, order, topology, continuous poset, abstract basis, Smyth complete, Yoneda complete}
\subjclass[2010]{06B23, 06B35, 06F30, 18A35, 54D35, 54E50, 54E55}
\title{Distance Domains: Continuity}
\begin{document}

\begin{abstract}
We take the abstract basis approach to classical domain theory and extend it to quantitative domains.  In doing so, we provide dual characterisations of distance domains (some new even in the classical case) as well as unifying and extending previous formal ball dualities, namely the Kostanek-Waszkiewicz and Romaguero-Valero theorems.  In passing, we also show that hemimetric spaces admit a hemimetric Smyth completion precisely when they are Noetherian in a natural quantitative sense.
\end{abstract}

\maketitle

\section*{Motivation}

Classical domain theory (see \cite{GierzHofmannKeimelLawsonMisloveScott2003}) traces its origins to Scott's foundational work on lambda calculus semantics in the late 60's.  Since then, applications have been found in various fields of e.g. theoretical computer science, topology and algebra.  Beginning around the late 90's, efforts have been made to develop a quantitative extension of domain theory more suitable to metric-like structures arising in analysis (see \cite{BonsangueBreugelRutten1998}, \cite{Wagner1997} and \cite{KunziSchellekens2002}).  This is where the present paper comes in, continuing the work we began in \cite{Bice2019a}, which itself is a further development of \cite{Bice2017}.  As mentioned in the introduction to \cite{Bice2019a}, our motivation comes primarily from potential applications in Banach space and C*-algebra theory, where classical domains have also found important applications in recent years \textendash\, see \cite{Keimel2016}.

The novelty of our approach comes from considering general non-symmetric distances, functions merely satisfying the triangle inequality.  In contrast, up until now the focus has been almost exclusively on more restrictive hemimetrics.  While hemimetrics provide quantitative analogs of preorders, to truly do quantitative domain theory we also need quantitative analogs of more general transitive relations, like the all important way-below relation.  In fact in \cite[\S9]{KostanekWaszkiewicz2011}, a kind of way-below distance was defined from a hemimetric, although it was considered as something of a special case.  To get the most out of quantitative domain theory, we should be able to go the other way around, starting with some non-symmetric distance from which we then define an appropriate hemimetric.  This is the approach we focus on, thus providing a quantitative version of the `abstract basis' approach to classical domains, as seen in \cite{Keimel2016}, for example.

Another key difference in our work comes from the use of topologies arising from combinations of balls and holes.  Ball topologies have certainly been considered before, but hole topologies have been almost completely ignored (although they are mentioned briefly in \cite[Exercise 6.2.11]{Goubault2013}).  However, hole topologies are key to defining appropriate topological analogs of suprema and maxima, not to mention the fact they have also appeared in various guises as certain weak topologies on spaces of linear operators, functions and subsets.

\section*{Outline}

As mentioned above, we laid the groundwork for the present paper in \cite{Bice2019a} and will make extensive use of the notation, terminology and theory presented there.  The first section is devoted to a review of the relevant parts of \cite{Bice2019a}, although we would also encourage the reader to familiarise themselves with \cite{Bice2019a} to get a full understanding of the present paper.

Several generalizations of continuity (in the order theoretic sense) have been considered in the literature.  Our approach in \autoref{Continuity} is to simply switch the quantifiers in completeness.  We then show in \autoref{dbhcont} and \autoref{maxctsequiv} how $\mathbf{d}^\bullet_\circ$-continuity and $\mathbf{d}$-$\max$-continuity can be characterized by interpolation conditions generalizing abstract bases.

Next we introduce distance analogs of the way-below relation in \autoref{WBD}.  After discussing their basic properties, we give dual characterizations of distance domains in \autoref{Tdomaineqs} and \autoref{Rdomaineqs}.  This allows us to largely bypass the way-below construction in favour of its inverse, the lower hemimetric construction.  This duality may also be of some interest even in the classical case.  Indeed, domains are usually defined as certain kinds of posets, but here we see that they can instead be defined as certain `$\max$-complete' abstract bases.

To complete $\mathbf{d}$-$\max$-predomains to $\mathbf{d}$-$\max$-domains, we introduce Hausdorff distances in \autoref{HD}, paying particular attention to the reverse Hausdorff distance and its relation to the usual Hausdorff distance.  The completion is then obtained in \autoref{predomaincompletion}, and its universality is proved in \autoref{predomainuniversality}.  In \autoref{pdcomp} we show that $\mathbf{d}$-$\max$-predomains are precisely the $\mathbf{d}$-$\max$-bases of $\mathbf{d}$-$\max$-domains.

In order to extend this completion from the relational to the topological setting, we introduce formal balls $X_+$ in \autoref{FB}.  As a precursor we show in \autoref{contdomballs} that $\mathbf{d}^\bullet_\circ$-completeness and $\mathbf{d}^\bullet_\circ$-continuity in $X$ are equivalent to their order theoretic counterparts in $X_+$.  This yields a dual formulation of a theorem of Kostanek-Waszkiewicz which also extends the Romaguera-Valero theorem \textendash\, see \autoref{KW} and the comments after it.  Lastly, we show how to complete Smyth predomains to domains in \autoref{toppredomaincompletion}, noting in \autoref{ESmyth} that the Smyth completion coincides with the Yoneda completion iff $X$ is $\mathbf{d}$-Noetherian.

\section*{Acknowledgements}

The author would like to thank Martino Lupini for many fruitful discussions on distance domains, and for his kind hospitality while visiting Victoria University of Wellington in New Zealand in November 2019.

\section{Preliminaries}

First we summarise the most important notation and conventions from \cite{Bice2019a}.

It will be convenient to consider the category $\mathbf{GRel}$ whose objects are sets and whose morphisms are `generalised relations', namely binary functions with values in $[0,\infty]$, i.e. $\mathrm{Hom}(X,Y)=[0,\infty]^{X\times Y}$.  We extend the standard infix notation for classical relations to generalised relations, i.e. $x\mathbf{a}y=\mathbf{a}(x,y)$.  Composition in $\mathbf{GRel}$ is defined via infima, specifically, for any $\mathbf{a}\in[0,\infty]^{X\times Z}$ and $\mathbf{b}\in[0,\infty]^{Z\times Y}$,
\[x(\mathbf{a}\circ\mathbf{b})y=\inf_{z\in Z}(x\mathbf{a}z+z\mathbf{b}y).\]

A generalised relation $\mathbf{a}\in[0,\infty]^{X\times Y}$ defines a classical relation $\leq^\mathbf{a}\ \subseteq X\times Y$ by
\[x\leq^\mathbf{a}y\qquad\Leftrightarrow\qquad x\mathbf{a}y=0.\]
Conversely, every classical relation $<\ \subseteq X\times Y$ will be identified with the generalised relation $<\ \in[0,\infty]^{X\times Y}$ defined by its characteristic function given by
\[<(x,y)=\begin{cases}0&\text{if }x<y\\ \infty&\text{otherwise}.\end{cases}\]
So under this identification, any $\mathbf{a}\in[0,\infty]^{Z\times X}$ yields $\mathbf{a}\ \circ<\ \in[0,\infty]^{Z\times Y}$ given by
\[x(\mathbf{a}\ \circ<)y=\inf_{z<y}x\mathbf{a}z.\]
In particular, we can take $<\ =\ \leq^\mathbf{b}$ and consider the function $\mathbf{a}\ \circ\leq^\mathbf{b}$.  We will also have occasion to consider the slightly smaller function $\mathbf{a}\circ\Phi^\mathbf{b}$ defined by
\[\mathbf{a}\circ\Phi^\mathbf{b}=\sup_{n\in\mathbb{N}}(\mathbf{a}\circ n\mathbf{b})=\sup_{\epsilon>0}(\mathbf{a}\ \circ<^\mathbf{b}_\epsilon),\]
where $x<^\mathbf{b}_\epsilon y$ means $x\mathbf{b}y<\epsilon$.

Morphisms are ordered pointwise be default, i.e. for any $\mathbf{a},\mathbf{b}\in[0,\infty]^{X\times Y}$,
\[\mathbf{a}\leq\mathbf{b}\qquad\Leftrightarrow\qquad\forall x\in X\ \forall y\in Y\ (x\mathbf{a}y\leq x\mathbf{b}y).\]
We will also have occasion to consider the weaker uniform preorder $\precapprox$, where $\mathbf{a}\precapprox\mathbf{b}$ means that, for all $Z\subseteq X\times Y$,
\[\tag{Uniform Preorder}\inf_{(x,y)\in Z}x\mathbf{b}y=0\qquad\Rightarrow\qquad\inf_{(x,y)\in Z}x\mathbf{a}y=0.\]
Equivalently, defining $\tfrac{\mathbf{a}}{\mathbf{b}}\in[0,\infty]^{[0,\infty]}$ by $\tfrac{\mathbf{a}}{\mathbf{b}}(r)=\sup_{x\mathbf{b}y\leq r}x\mathbf{a}y$ (so $\tfrac{\mathbf{a}}{\mathbf{b}}$ is the smallest monotone function satisfying $\tfrac{\mathbf{a}}{\mathbf{b}}(x\mathbf{b}y)\geq x\mathbf{a}y$) we can define/characterise $\precapprox$ by
\[\mathbf{a}\precapprox\mathbf{b}\qquad\Leftrightarrow\qquad\lim_{r\rightarrow0}\tfrac{\mathbf{a}}{\mathbf{b}}(r)=0.\]

We call $\mathbf{d}\in[0,\infty]^{X\times X}$ a \emph{distance} if $\mathbf{d}$ satisfies the triangle inequality
\[\tag{$\triangle$}\label{triangle}\mathbf{d}\leq\mathbf{d}\circ\mathbf{d}.\]
Given $<\ \subseteq X\times X$ (again identified with its characteristic function) \eqref{triangle} becomes $<\circ<\ \subseteq\ <$, which is just transitivity, i.e. distances generalise transitive relations.  We call a distance $\mathbf{d}$ a \emph{hemimetric} if $\leq^\mathbf{d}$ is reflexive and hence a preorder, while we call $\mathbf{d}$ a \emph{quasimetric} if $\leq^\mathbf{d}$ is also antisymmetric and hence a partial order.  For any $r\in\mathbb{R}$, let $r_+=r\vee0$.  From any $\mathbf{d}\in[0,\infty]^{X\times Y}$, we can define the upper and lower hemimetrics $\overline{\mathbf{d}}\in[0,\infty]^{X\times X}$ and $\underline{\mathbf{d}}\in[0,\infty]^{Y\times Y}$ by
\begin{align}
\tag{Upper Hemimetric}x\overline{\mathbf{d}}z&=\sup_{y\in Y}(x\mathbf{d}y-z\mathbf{d}y)_+.\\
\tag{Lower Hemimetric}z\underline{\mathbf{d}}y&=\sup_{x\in X}(x\mathbf{d}y-x\mathbf{d}z)_+.
\end{align}

\begin{center}
\textbf{From now on, we assume $\mathbf{d}$ and $\mathbf{e}$ are distances on a set $X$.}
\end{center}

As in classical domain theory, directed subsets and their minimal upper bounds play a fundamental role.  Specifically, we call $Y\subseteq X$ \emph{$\mathbf{d}$-directed} if
\[\tag{$\mathbf{d}$-directed}\inf_{y\in Y}\sup_{z\in F}z\mathbf{d}y=0,\]
for all $F\in\mathcal{F}(Y)=\{G\subseteq Y:G\text{ is finite}\}$.  Note that $\mathbf{d}$-directed subsets are necessarily non-empty, as we take $\inf\emptyset=\infty$.  Define functions $y\mathbf{d}$ and $\mathbf{d}z$ by
\[y\mathbf{d}(z)=y\mathbf{d}z=\mathbf{d}z(y).\]
For any $Z\subseteq Y$, we also define functions $Z\mathbf{d}$ and $\mathbf{d}Z$ by
\[Z\mathbf{d}=\sup_{z\in Z}z\mathbf{d}\qquad\text{and}\qquad\mathbf{d}Z=\inf_{z\in Z}\mathbf{d}z.\]
Then $\mathbf{d}$-directedness can be expressed as $(F\mathbf{d})Y=0$, for all $F\in\mathcal{F}(Y)$.

We will also have occasion to deal with more general \emph{$\mathbf{d}$-final} $Y\subseteq X$, meaning that $x\mathbf{d}Y=0$, for all $x\in Y$.  We say that $Y$ is \emph{$\mathbf{d}$-initial} if $Y$ is $\mathbf{d}^\mathrm{op}$-final (in domain theory, final and initial subsets would often be called `round').  If we let $\mathbf{0}$ denote the zero hemimetric and consider $\mathcal{F}\mathbf{d}\in[0,\infty]^{\mathcal{F}(X)\times X}$ defined by $Y(\mathcal{F}\mathbf{d})x=Y\mathbf{d}x$ then the entirety of $X$ being $\mathbf{d}$-directed/final/initial can be expressed succinctly via composition in $\mathbf{GRel}$, specifically
\begin{align*}
\mathcal{F}\mathbf{d}\circ\mathbf{0}=\mathcal{F}\mathbf{0}\qquad&\Leftrightarrow\qquad X\text{ is $\mathbf{d}$-directed}.\\
\mathbf{d}\circ\mathbf{0}=\mathbf{0}\qquad&\Leftrightarrow\qquad X\text{ is $\mathbf{d}$-final}.\\
\mathbf{0}\circ\mathbf{d}=\mathbf{0}\qquad&\Leftrightarrow\qquad X\text{ is $\mathbf{d}$-initial}.
\end{align*}

If $x$ is an upper $\leq^\mathbf{d}$-bound of $Y\subseteq X$, which we write as $Y\leq^\mathbf{d}x$, then we call $x$ a \emph{$\mathbf{d}$-supremum} if $x\mathbf{d}\leq Y\mathbf{d}$ and a \emph{$\mathbf{d}$-maximum} if $\mathbf{d}Y\leq\mathbf{d}x$, i.e.
\begin{align*}
x=\text{$\mathbf{d}$-$\sup Y\ $}\qquad&\Leftrightarrow\qquad Y\leq^\mathbf{d}x\quad\text{and}\quad Y\mathbf{d}\geq x\mathbf{d}.\\
x=\text{$\mathbf{d}$-$\max Y$}\qquad&\Leftrightarrow\qquad Y\leq^\mathbf{d}x\quad\text{and}\quad\mathbf{d}Y\leq\mathbf{d}x.
\end{align*}
Again these generalise the usual notions for partial order relations when they are identified with their characteristic functions in $\mathbf{GRel}$.

Alternatively, we get a subtly different version of quantitative domain theory by considering nets instead of subsets and limits instead of upper bounds.  The analog of $\mathbf{d}$-directed subsets are the $\mathbf{d}$-(pre-)Cauchy nets defined by
\begin{align}
\label{pre-Cauchy}\lim_\gamma\limsup_\delta x_\gamma\mathbf{d}x_\delta=0\qquad&\Leftrightarrow\qquad(x_\lambda)\text{ is \emph{$\mathbf{d}$-pre-Cauchy}}.\\
\label{Cauchy}\lim_\gamma\sup_{\gamma\prec\delta} x_\gamma\mathbf{d}x_\delta=0\qquad&\Leftrightarrow\qquad(x_\lambda)\text{ is \emph{$\mathbf{d}$-Cauchy}}.
\end{align}
The analogs of suprema and maxima are limits in the Yoneda and Smyth topologies respectively.  To define these, we first need to generalise a couple of standard topologies defined from partial order relations.

The \emph{Alexandroff topology}, denoted by $\mathbf{d}^\bullet$, is generated by the upper balls
\[c^\bullet_r=\{x\in X:c\mathbf{d}x<r\}.\]
The \emph{lower topology}, denoted by $\mathbf{d}_\circ$, is generated by the lower holes
\[c_\circ^r=\{x\in X:c\mathbf{d}x>r\}.\]
The \emph{Smyth topology}, denoted by $\mathbf{d}^\bullet_\circ$, is the join of the Alexandroff and lower topologies, i.e. generated by both upper balls and lower holes.  Equivalently, the Smyth topology is the weakest topology making the functions $(c\mathbf{d})_{c\in X}$ continuous.

\begin{rmk}
The name comes from the fact a quasimetric space is Smyth complete in the sense of \cite[Definition 7.2.1]{Goubault2013} iff every $\mathbf{d}$-Cauchy net has a limit in the Smyth topology \textendash\, while \cite[Definition 7.2.1]{Goubault2013} uses the symmetric ball/Alexandroff topology $\mathbf{d}^\bullet_\bullet=\mathbf{d}^{\vee\bullet}$ (where $\mathbf{d}^\vee=\mathbf{d}\vee\mathbf{d}^\mathrm{op}$), $\mathbf{d}^\bullet_\bullet$-limits and $\mathbf{d}^\bullet_\circ$-limits are the same for $\mathbf{d}$-Cauchy nets in hemimetric spaces, by \cite[(8.8), (8.9), (8.10) and (8.15)]{Bice2019a}.
\end{rmk}

The \emph{upper topology} $\mathbf{d}^\circ=(\mathbf{d}^\mathrm{op})_\circ$ is generated by the upper holes
\[c^\circ_r=\{x\in X:x\mathbf{d}c>r\}.\]
The \emph{Yoneda topology} $\mathbf{d}^\circ_\circ=\mathbf{d}^\circ\vee\mathbf{d}_\circ$ is generated by both upper and lower holes.

\begin{rmk}
Again, the name here comes from the fact a quasimetric space is Yoneda complete, in the sense of \cite[Definition 7.4.1]{Goubault2013}, iff every $\mathbf{d}$-Cauchy net has a limit in the Yoneda topology.  Again, while \cite[Definition 7.4.1]{Goubault2013} uses so called $\mathbf{d}$-limits, these are the same as $\mathbf{d}^\circ_\circ$-limits for $\mathbf{d}$-Cauchy nets in hemimetric spaces, by \cite[(8.11) and (8.16)]{Bice2019a}.
\end{rmk}

We denote convergence in $\mathbf{d}^\bullet$, $\mathbf{d}_\circ$, $\mathbf{d}^\bullet_\circ$, etc. by $\barrow$, $\arrowc$, $\barrowc$, etc..  As with subsets, for any net $(x_\lambda)$, we define functions $(x_\lambda)\mathbf{d}$ and $\mathbf{d}(x_\lambda)$ by
\[(x_\lambda)\mathbf{d}=\limsup_\lambda x_\lambda\mathbf{d}\qquad\Leftrightarrow\qquad\mathbf{d}(x_\lambda)=\liminf_\lambda\mathbf{d}x_\lambda.\]
These functions can be used to characterise convergence, e.g.
\begin{align*}
x_\lambda\arrowb x\qquad&\Leftrightarrow\qquad(x_\lambda)\mathbf{d}\leq x\mathbf{d}.\\
x_\lambda\arrowc x\qquad&\Leftrightarrow\qquad\mathbf{d}(x_\lambda)\geq\mathbf{d}x.
\end{align*}
Note that when $(x_\lambda)$ is $\mathbf{d}$-Cauchy, $\limsup$ and $\liminf$ can be replaced with $\lim$.

It will also be convenient to define what it means for a subset to be below a net and vice versa.  Specifically, for any $(x_\lambda)\subseteq X$ and $Y\subseteq X$, let
\begin{align*}
(x_\lambda)\leq^\mathbf{d}Y\qquad&\Leftrightarrow\qquad x_\lambda\mathbf{d}Y\rightarrow0.\\
Y\leq^\mathbf{d}(x_\lambda)\qquad&\Leftrightarrow\qquad\,y\mathbf{d}x_\lambda\rightarrow0,\text{ for all }y\in Y.\\
Y\equiv^\mathbf{d}(x_\lambda)\qquad&\Leftrightarrow\qquad Y\leq^\mathbf{d}(x_\lambda)\leq^\mathbf{d}Y.
\end{align*}

\section{Continuity}\label{Continuity}

Our first goal is to define and examine two general quantitative notions of continuity (one using subsets and the other using nets) which extend the classical order theoretic notions of a continuous poset and an abstract basis.

To motivate these, first recall that a poset $(X,\leq)$ is \emph{continuous} (see \cite[Definition I-1.6]{GierzHofmannKeimelLawsonMisloveScott2003} or \cite[Definition 5.1.5]{Goubault2013}) if it satisfies either of the following equivalent conditions relative to the way-below relation $\ll$ defined from $\leq$.
\begin{align*}
(X,\leq)\text{ is a continuous poset}\quad\Leftrightarrow\quad&\forall x\in X\ \exists\text{ $\ll$-directed }Y\subseteq X\ (x=\text{$\leq$-$\sup Y$})\\
\Leftrightarrow\quad&\forall x\in X\ \exists\text{ $\ll$-increasing }(x_\lambda)\ (x_\lambda\xrightarrow{\leq^\circ_\circ}x).
\end{align*}

If we instead start with a transitive relation $\ll$ and replace $\leq$-suprema with $\ll$-maxima and the $\leq$-Yoneda topology by the $\ll$-Smyth topology, we get abstract bases instead (see \cite[Definition III-4.15]{GierzHofmannKeimelLawsonMisloveScott2003} or \cite[Definition 5.1.32]{Goubault2013} for the more standard interpolation definition of abstract bases, discussed further below).
\begin{align*}
(X,\ll)\text{ is an abstract basis}\quad\Leftrightarrow\quad&\forall x\in X\ \exists\text{ $\ll$-directed }Y\subseteq X\ (x=\text{$\ll$-$\max Y$})\\
\Leftrightarrow\quad&\forall x\in X\ \exists\text{ $\ll$-increasing }(x_\lambda)\ (x_\lambda\xrightarrow{\ll^\bullet_\circ}x).
\end{align*}

Accordingly, we are led to the following general quantitative notions of continuity.

\begin{dfn}\label{ctsdef}
For any topology $\mathcal{T}$ on $X$ and relation $\mathcal{R}\subseteq X\times\mathcal{P}(X)$, define
\begin{align*}
X\text{ is \emph{$\mathbf{d}$-$\mathcal{R}$-continuous}}\quad&\Leftrightarrow\quad\forall x\in X\ \exists\text{$\mathbf{d}$-directed }Y\subseteq X\ (x\mathcal{R}Y).\\
X\text{ is \emph{$\mathbf{d}$-$\mathcal{T}\!$-continuous}}\quad&\Leftrightarrow\quad\forall x\in X\ \exists\text{$\mathbf{d}$-Cauchy }(x_\lambda)\subseteq X\ (x_\lambda\xrightarrow{\mathcal{T}}x).
\end{align*}
\end{dfn}

We drop $\mathbf{d}$ when it is clear from the context.

Note these notions are trivial for hemimetric $\mathbf{d}$.  Indeed, if $x\mathbf{d}x=0$ then any constant $x$-valued net is $\mathbf{d}$-Cauchy, with limit $x$ for any topology $\mathcal{T}$.  It follows that $\mathbf{d}^\circ_\circ$-continuity, i.e. saying that each $x\in X$ is a $\mathbf{d}^\circ_\circ$-limit of a $\mathbf{d}$-Cauchy net, is trivial, as this forces $\mathbf{d}$ to be a hemimetric, by \cite[(8.16)]{Bice2019a}.  Likewise, if $\mathcal{R}$ is $\mathbf{d}$-$\sup$ or $\mathbf{d}$-$\max$ then $x\mathcal{R}\{x\}$ whenever $x\mathbf{d}x=0$.  Again it follows that $\mathbf{d}$-$\sup$-continuity, i.e. saying that each $x\in X$ is a $\mathbf{d}$-supremum of some $\mathbf{d}$-directed subset, is trivial, as this forces $\mathbf{d}$ to be a hemimetric, by \cite[(10.3)]{Bice2019a}.

Thus we primarily interested in $\mathbf{d}^\bullet_\circ$-continuity and $\mathbf{d}$-$\max$-continuity.  Indeed, classical domains can also be characterized by $\ll^\bullet_\circ$-continuity/$\ll$-$\max$-continuity rather than the more standard $\ll$-$\leq^\circ_\circ$-continuity/$\ll$-($\leq$-$\sup$)-continuity mentioned above for continuous posets (see \autoref{Tdomaineqs}/\autoref{Rdomaineqs} below).

First we wish to show how continuity can be characterised by certain interpolation conditions in $\mathbf{GRel}$.  The motivation here comes from the fact that the standard definition of abstract basis does not involve maxima, as we mentioned above, but rather the interpolation condition
\begin{equation}\label{ABInterpolation}
F\ll x\qquad\Rightarrow\qquad\exists y\in X\ (F\ll y\ll x),
\end{equation}
for all $F\in\mathcal{F}(X)$.  To generalise this, let us define $\mathcal{F}\mathbf{d}\in[0,\infty]^{\mathcal{F}(X)\times X}$ by
\[F(\mathcal{F}\mathbf{d})x=F\mathbf{d}x=\sup_{y\in F}y\mathbf{d}x.\]
So \eqref{ABInterpolation} can be expressed as $(\mathcal{F}\!\ll)\subseteq(\mathcal{F}\!\ll)\,\circ\ll$ (identifying relations with characteristic functions as usual).  This interpolation condition could be interpreted in various ways for more general distances.  The first condition that no doubt springs to mind is $\mathcal{F}\mathbf{d}\circ\mathbf{d}\leq\mathcal{F}\mathbf{d}$, but this is too weak to characterise either version of continuity we have defined.  The condition characterising $\mathbf{d}$-$\max$-continuity is rather $\mathcal{F}\mathbf{d}\ \circ\leq^\mathbf{d}\ \leq\mathcal{F}\mathbf{d}$, while $\mathbf{d}^\bullet_\circ$-continuity is characterised by the slightly weaker condition $\mathcal{F}\mathbf{d}\circ\Phi^\mathbf{d}\leq\mathcal{F}\mathbf{d}$, as we now proceed to show.

Recall that Smyth convergence can be characterised as follows.
\[x_\lambda\barrowc x\qquad\Leftrightarrow\qquad\lim_\lambda\mathbf{d}x_\lambda=\mathbf{d}x\qquad\Leftrightarrow\qquad\forall y\in X\ (y\mathbf{d}x_\lambda\rightarrow y\mathbf{d}x).\]

\begin{thm}\label{dbhcont}
The following are equivalent.
\begin{enumerate}
\item\label{dbhcont1} $X$ is $\mathbf{d}^\bullet_\circ$-continuous.
\item\label{dbhcont2} $\mathcal{F}\mathbf{d}\circ\Phi^\mathbf{d}\leq\mathcal{F}\mathbf{d}$.
\item\label{dbhcont2.5} $\mathcal{F}\mathbf{d}\circ\mathbf{d}\precapprox\mathcal{F}\mathbf{d}$ and $\mathbf{d}\circ\Phi^\mathbf{d}\leq\mathbf{d}$.
\item\label{dbhcont3} For any $\mathbf{\underline{d}}$-Cauchy $(x_\lambda)\subseteq X$, we have $\mathbf{d}$-Cauchy $(y_\gamma)\subseteq X$ with
\[(x_\lambda)\mathbf{\underline{d}}=(y_\gamma)\mathbf{\underline{d}}\qquad\text{and}\qquad\mathbf{d}(x_\lambda)=\mathbf{d}(y_\gamma).\]
\end{enumerate}
\end{thm}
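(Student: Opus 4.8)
The plan is to run the cycle $(\ref{dbhcont1})\Rightarrow(\ref{dbhcont2})\Rightarrow(\ref{dbhcont2.5})\Rightarrow(\ref{dbhcont3})\Rightarrow(\ref{dbhcont1})$. Three of the arrows are short. For $(\ref{dbhcont1})\Rightarrow(\ref{dbhcont2})$, fix $x$ and a $\mathbf{d}$-Cauchy net $(x_\lambda)$ with $x_\lambda\barrowc x$; then $y\mathbf{d}x_\lambda\to y\mathbf{d}x$ for every $y$, and since $x_\lambda\mathbf{d}x=\lim_\delta x_\lambda\mathbf{d}x_\delta$ is bounded by the Cauchy tails $\sup_{\lambda\prec\delta}x_\lambda\mathbf{d}x_\delta\to0$, also $x_\lambda\mathbf{d}x\to0$; so for $F\in\mathcal{F}(X)$ and $\epsilon,\delta>0$ a late enough $x_\lambda$ witnesses $\inf_{z\mathbf{d}x<\epsilon}F\mathbf{d}z\leq F\mathbf{d}x+\delta$, giving $\mathcal{F}\mathbf{d}\circ\Phi^\mathbf{d}\leq\mathcal{F}\mathbf{d}$. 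For $(\ref{dbhcont2})\Rightarrow(\ref{dbhcont2.5})$, note $\mathbf{a}\circ\mathbf{d}\leq\mathbf{a}\circ\Phi^\mathbf{d}$ for every $\mathbf{a}$ (as $n\mathbf{d}\geq\mathbf{d}$), so $(\ref{dbhcont2})$ yields $\mathcal{F}\mathbf{d}\circ\mathbf{d}\leq\mathcal{F}\mathbf{d}$, hence $\precapprox$, while restricting $\mathcal{F}\mathbf{d}$ to singletons turns $(\ref{dbhcont2})$ into $\mathbf{d}\circ\Phi^\mathbf{d}\leq\mathbf{d}$. For $(\ref{dbhcont3})\Rightarrow(\ref{dbhcont1})$, the constant net at $x$ is $\underline{\mathbf{d}}$-Cauchy (as $x\underline{\mathbf{d}}x=0$), so $(\ref{dbhcont3})$ supplies a $\mathbf{d}$-Cauchy $(y_\gamma)$ with $\mathbf{d}(y_\gamma)=\mathbf{d}x$, i.e.\ $\lim_\gamma\mathbf{d}y_\gamma=\mathbf{d}x$, i.e.\ $y_\gamma\barrowc x$.

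The substance is $(\ref{dbhcont2.5})\Rightarrow(\ref{dbhcont3})$, and I would first bootstrap $(\ref{dbhcont2.5})$ into a uniform-over-$F$ interpolation: for all $F\in\mathcal{F}(X)$, $x\in X$ and $\epsilon,\delta>0$ there is $z$ with $z\mathbf{d}x<\epsilon$ and $y\mathbf{d}z<y\mathbf{d}x+\delta$ for \emph{each} $y\in F$. Using $\mathbf{d}\circ\Phi^\mathbf{d}\leq\mathbf{d}$, pick for each $y\in F$ and $m\in\mathbb{N}$ a point $w^m_y$ with $w^m_y\mathbf{d}x<1/m$ and $y\mathbf{d}w^m_y<y\mathbf{d}x+1/m$; the sets $G_m=\{w^m_y:y\in F\}$ satisfy $G_m\mathbf{d}x\to0$, so $\mathcal{F}\mathbf{d}\circ\mathbf{d}\precapprox\mathcal{F}\mathbf{d}$ (in the form $\lim_{r\to0}\tfrac{\mathcal{F}\mathbf{d}\circ\mathbf{d}}{\mathcal{F}\mathbf{d}}(r)=0$) produces, for suitably large $m$, a single $z$ with $G_m\mathbf{d}z$ and $z\mathbf{d}x$ both as small as desired, whence $y\mathbf{d}z\leq y\mathbf{d}w^m_y+w^m_y\mathbf{d}z$ finishes it.

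Now take $\underline{\mathbf{d}}$-Cauchy $(x_\lambda)$ and set $s_\lambda=\sup_{\lambda\prec\mu}x_\lambda\underline{\mathbf{d}}x_\mu\to0$. From $w\mathbf{d}x_\mu\leq w\mathbf{d}x_\lambda+s_\lambda$ for $\lambda\prec\mu$ one sees each $w\mathbf{d}x_\lambda$ converges, to $L(w):=\liminf_\lambda w\mathbf{d}x_\lambda$, with $w\mathbf{d}x_\lambda\geq L(w)-s_\lambda$ uniformly in $w$; a short calculation then gives $\mathbf{d}(x_\lambda)=L$ and $(x_\lambda)\underline{\mathbf{d}}(z)=\sup_w(w\mathbf{d}z-L(w))_+$, and the same two identities hold for \emph{any} $\mathbf{d}$-Cauchy net with $\lim_\gamma\mathbf{d}y_\gamma=L$. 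So it suffices to build a $\mathbf{d}$-Cauchy net whose pointwise limit of distances is $L$, which I would do by recursion over the coordinatewise-ordered $\mathcal{F}(X)\times\mathbb{N}$: at $(F,n)$ let $F^\ast=F\cup\{z_{G,m}:(G,m)\prec(F,n)\}$ (a finite set), choose $\lambda_{F,n}$ dominating all earlier $\lambda_{G,m}$ and late enough that $s_{\lambda_{F,n}}<1/n$ and $w\mathbf{d}x_{\lambda_{F,n}}<L(w)+1/n$ for $w\in F$, and let $z_{F,n}$ be the point from the uniform interpolation applied to $F^\ast$, $x_{\lambda_{F,n}}$, $\epsilon=\delta=1/n$. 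Then $z_{F,n}\mathbf{d}x_{\lambda_{F,n}}<1/n$ and $w\mathbf{d}z_{F,n}<w\mathbf{d}x_{\lambda_{F,n}}+1/n$ for $w\in F^\ast$; combined with $x_{\lambda_{F,n}}\underline{\mathbf{d}}x_{\lambda_{F',n'}}\leq s_{\lambda_{F,n}}$ and $z_{F,n}\in(F')^\ast$ whenever $(F,n)\prec(F',n')$, this bounds $z_{F,n}\mathbf{d}z_{F',n'}$ by roughly $1/n$, so $(z_{F,n})$ is $\mathbf{d}$-Cauchy, and since $w\in F$ eventually it gives $w\mathbf{d}z_{F,n}\to L(w)$.

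The delicate point is exactly this last construction. The interpolation property only consumes finite sets, whereas $\mathbf{d}$-Cauchyness of the output net forces each earlier term to reappear inside the finite parameter set used at every later stage; the remedy is the bookkeeping above --- enlarging $F$ to $F^\ast$ and recursing on $(n,|F|)$, so that only finitely many earlier choices are ever relevant and no limit stages arise --- and the real work is checking that one and the same net simultaneously comes out $\mathbf{d}$-Cauchy and carries the prescribed limit data $(x_\lambda)\underline{\mathbf{d}}$ and $\mathbf{d}(x_\lambda)$.
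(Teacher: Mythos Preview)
Your cycle $(\ref{dbhcont1})\Rightarrow(\ref{dbhcont2})\Rightarrow(\ref{dbhcont2.5})\Rightarrow(\ref{dbhcont3})\Rightarrow(\ref{dbhcont1})$ and the three short arrows match the paper exactly, as does your bootstrap of $(\ref{dbhcont2.5})$ into the uniform-over-$F$ interpolation claim: the paper states and proves the same claim first, with only cosmetic differences in how the auxiliary points $y'$ (your $w_y^m$) are chosen before $\mathcal{F}\mathbf{d}\circ\mathbf{d}\precapprox\mathcal{F}\mathbf{d}$ is invoked to produce the common interpolant $z$.

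Where you genuinely diverge is in the net construction for $(\ref{dbhcont2.5})\Rightarrow(\ref{dbhcont3})$. The paper uses the \emph{non-recursive} index set $\Gamma=\mathcal{F}(X)\times\Lambda\times(0,\infty)$, applying the interpolation claim at $(F,\lambda,\epsilon)$ to the fixed $F$ and $x_\lambda$ with error $\epsilon$; it then shows the resulting net is only $\mathbf{d}$-\emph{pre}-Cauchy (by restricting the inner $\limsup$ to the cofinal set where $y_{(F,\lambda,\epsilon)}\in G$) and invokes \cite[Theorem~7.3(1)]{Bice2019a} to extract a $\mathbf{d}$-Cauchy subnet. You instead recurse over $\mathcal{F}(X)\times\mathbb{N}$, feeding every earlier choice $z_{G,m}$ back into $F^\ast$ so that the estimate $z_{F,n}\mathbf{d}z_{F',n'}\lesssim 1/n$ holds automatically for all strictly later $(F',n')$, aiming to get $\mathbf{d}$-Cauchy directly. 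The recursion is legitimate because each $(F,n)$ has only finitely many predecessors in $\mathcal{F}(X)\times\mathbb{N}$, and your estimate is correct for strict $(F,n)\prec(F',n')$. Your route is thus more self-contained (no appeal to the pre-Cauchy $\Rightarrow$ Cauchy-subnet lemma) at the price of heavier bookkeeping; the paper's route trades the feedback mechanism for one external citation. Both reach the same conclusion, and the derivation of $(x_\lambda)\underline{\mathbf{d}}=(y_\gamma)\underline{\mathbf{d}}$ from $\mathbf{d}(x_\lambda)=\mathbf{d}(y_\gamma)$ via \cite[(7.3)]{Bice2019a} is identical in both.
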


\begin{proof}\
\begin{itemize}
\item[\eqref{dbhcont3}$\Rightarrow$\eqref{dbhcont1}]  Take $(x_\lambda)$ to be a constant net.

\item[\eqref{dbhcont1}$\Rightarrow$\eqref{dbhcont2}]  If $x_\lambda\barrowc x$ then, for any $y\in X$, we have $y\mathbf{d}x_\lambda\rightarrow y\mathbf{d}x$.  If $(x_\lambda)$ is also $\mathbf{d}$-Cauchy then $x_\lambda\mathbf{d}x\rightarrow0$.  Thus, for any $F\in\mathcal{F}(X)$ and $\epsilon>0$, we have some $x_\lambda$ with $F\mathbf{d}x_\lambda<F\mathbf{d}x+\epsilon$ and $x_\lambda\mathbf{d}x<\epsilon$, i.e. $\mathcal{F}\mathbf{d}\circ\Phi^\mathbf{d}\leq\mathcal{F}\mathbf{d}$.

\item[\eqref{dbhcont2}$\Rightarrow$\eqref{dbhcont2.5}]  Assuming \eqref{dbhcont2}, we immediately have $\mathbf{d}\circ\Phi^\mathbf{d}\leq\mathbf{d}$.  Also $\mathcal{F}\mathbf{d}\circ\mathbf{d}\precapprox\mathcal{F}\mathbf{d}$, as
\[\mathcal{F}\mathbf{d}\circ\mathbf{d}\leq\sup_{n\in\mathbb{N}}\mathcal{F}\mathbf{d}\circ n\mathbf{d}=\mathcal{F}\mathbf{d}\circ\Phi^\mathbf{d}\leq\mathcal{F}\mathbf{d}.\]

\item[\eqref{dbhcont2.5}$\Rightarrow$\eqref{dbhcont3}]  Assume \eqref{dbhcont2.5} and take $\epsilon>0$, $F\in\mathcal{F}(X)$ and $x\in X$.  We claim that we have $z<^\mathbf{d}_\epsilon x$ with $y\mathbf{d}z<y\mathbf{d}x+\epsilon$, for all $y\in F$.  Indeed, for each $y\in F$, we have $y'\in X$ such that $y\mathbf{d}y'\leq y\mathbf{d}x+\frac{1}{2}\epsilon$ and $\frac{\mathcal{F}\mathbf{d}\circ\mathbf{d}}{\mathcal{F}\mathbf{d}}(y'\mathbf{d}x)<\frac{1}{2}\epsilon$.  Thus $F'(\mathcal{F}\mathbf{d}\circ\mathbf{d})x\leq\frac{\mathcal{F}\mathbf{d}\circ\mathbf{d}}{\mathcal{F}\mathbf{d}}(F'\mathbf{d}x)<\frac{1}{2}\epsilon$, where $F'=\{y':y\in F\}$, i.e. we have $z\in X$ with $F'\mathbf{d}z+z\mathbf{d}x<\frac{1}{2}\epsilon$, so $F'\mathbf{d}z<\frac{1}{2}\epsilon$ and $z<^\mathbf{d}_\epsilon x$.  By \eqref{triangle}, $y\mathbf{d}z\leq y\mathbf{d}y'+y'\mathbf{d}z\leq y\mathbf{d}x+\epsilon$, for all $y\in F$.

Now take $\mathbf{\underline{d}}$-Cauchy $(x_\lambda)_{\lambda\in\Lambda}\subseteq X$ and consider $\Gamma=\mathcal{F}(X)\times\Lambda\times(0,\infty)$ directed by $\subseteq\times\prec\times>$.  By the claim, for every $(F,\lambda,\epsilon)\in\Gamma$, we have $y_{(F,\lambda,\epsilon)}\in X$ with $y\mathbf{d}y_{(F,\lambda,\epsilon)}<y\mathbf{d}x_\lambda+\epsilon$, for all $y\in F$, and $y_{(F,\lambda,\epsilon)}\mathbf{d}x_\lambda<\epsilon$.  This implies $\mathbf{d}(y_\gamma)\leq\mathbf{d}(x_\lambda)$ and $\mathbf{d}(x_\lambda)\leq\mathbf{d}(y_\gamma)$ respectively, as $\mathbf{d}$ is a distance.  Thus, by \cite[(7.3)]{Bice2019a},
\[(x_\lambda)\mathbf{\underline{d}}y=X(\mathbf{d}y-\mathbf{d}(x_\lambda))_+=X(\mathbf{d}y-\mathbf{d}(y_\gamma))_+=(y_\gamma)\mathbf{\underline{d}}y.\]
To see that $(y_\gamma)$ is $\mathbf{d}$-pre-Cauchy, note that
\begin{align*}
&\ \limsup_{(F,\lambda,\epsilon)\in\Gamma}\limsup_{(G,\beta,\delta)\in\Gamma}y_{(F,\lambda,\epsilon)}\mathbf{d}y_{(G,\beta,\delta)}\\
=&\ \limsup_{(F,\lambda,\epsilon)\in\Gamma}\limsup_{\substack{(G,\beta,\delta)\in\Gamma\\ y_{(F,\lambda,\epsilon)}\in G}}y_{(F,\lambda,\epsilon)}\mathbf{d}y_{(G,\beta,\delta)}\\
\leq&\ \limsup_{(F,\lambda,\epsilon)\in\Gamma}\limsup_{\substack{(G,\beta,\delta)\in\Gamma\\ y_{(F,\lambda,\epsilon)}\in G}}y_{(F,\lambda,\epsilon)}\mathbf{d}x_\beta+\delta\\
=&\ \limsup_{(F,\lambda,\epsilon)\in\Gamma}\limsup_{\beta\in\Lambda}y_{(F,\lambda,\epsilon)}\mathbf{d}x_\beta\\
\leq&\ \limsup_{(F,\lambda,\epsilon)\in\Gamma}\limsup_{\beta\in\Lambda}(y_{(F,\lambda,\epsilon)}\mathbf{d}x_\lambda+x_\lambda\underline{\mathbf{d}}x_\beta)\\
\leq&\ \limsup_{(F,\lambda,\epsilon)\in\Gamma}(\epsilon+\limsup_{\beta\in\Lambda}x_\lambda\underline{\mathbf{d}}x_\beta)\\
=&\ \limsup_{\lambda\in\Lambda}\limsup_{\beta\in\Lambda}x_\lambda\underline{\mathbf{d}}x_\beta\\
=&\ 0,\qquad\text{as $(x_\lambda)$ is $\mathbf{\underline{d}}$(-pre)-Cauchy.}
\end{align*}
Thus $(y_\gamma)$ has a $\mathbf{d}$-Cauchy subnet, by \cite[Theorem 7.3 (1)]{Bice2019a}.\qedhere
\end{itemize}
\end{proof}

Next we characterize $\mathbf{d}$-$\max$-continuity.

\begin{thm}\label{maxctsequiv}
The following are equivalent.
\begin{enumerate}
\item\label{maxcts} $X$ is $\mathbf{d}$-$\max$-continuous.
\item\label{Fd<d<Fd} $\mathcal{F}\mathbf{d}\circ\mathbin{\leq^\mathbf{d}}\leq\mathcal{F}\mathbf{d}$.
\item\label{Fd<d<approxFd} $\mathbf{d}\circ\mathbin{\leq^\mathbf{d}}\leq\mathbf{d}$ and $\leq^{\mathcal{F}\mathbf{d}}\ \subseteq\Phi^{\mathcal{F}\mathbf{d}}\circ\mathbf{\leq^\mathbf{d}}$.
\item\label{maxlimcts} $\mathbf{d}\circ\mathbin{\leq^\mathbf{d}}\precapprox\mathbf{d}$ and $X$ is $\mathbf{d}^\bullet_\circ$-continuous.
\item\label{dbardirected} For any $\mathbf{\underline{d}}$-directed $Y\subseteq X$, we have $\mathbf{d}$-directed $Z\subseteq X$ with
\[Y\underline{\mathbf{d}}=Z\underline{\mathbf{d}}\qquad\text{and}\qquad\mathbf{d}Y=\mathbf{d}Z.\]
\end{enumerate}
\end{thm}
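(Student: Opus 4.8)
The plan is to prove \autoref{maxctsequiv} by establishing the cycle
\eqref{maxcts}$\Rightarrow$\eqref{Fd<d<Fd}$\Rightarrow$\eqref{Fd<d<approxFd}$\Rightarrow$\eqref{maxlimcts}$\Rightarrow$\eqref{dbardirected}$\Rightarrow$\eqref{maxcts},
mirroring the structure of the proof of \autoref{dbhcont} and leaning heavily on it since $\mathbf{d}$-$\max$-continuity should refine $\mathbf{d}^\bullet_\circ$-continuity. First I would do \eqref{maxcts}$\Rightarrow$\eqref{Fd<d<Fd}: given $x\in X$ and a $\mathbf{d}$-directed $Y$ with $x=\mathbf{d}$-$\max Y$, unpack the two defining conditions $Y\leq^\mathbf{d}x$ and $\mathbf{d}Y\leq\mathbf{d}x$. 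To see $\mathcal{F}\mathbf{d}\circ\leq^\mathbf{d}\,\leq\mathcal{F}\mathbf{d}$, fix $F\in\mathcal{F}(X)$ and $w$ with $x\leq^\mathbf{d}w$, i.e. $x\mathbf{d}w=0$; I need $F(\mathcal{F}\mathbf{d}\circ\leq^\mathbf{d})x=\inf_{z\leq^\mathbf{d}x}F\mathbf{d}z\le F\mathbf{d}x$ — wait, more carefully, the composite $\mathcal{F}\mathbf{d}\circ\leq^\mathbf{d}$ evaluated at $(F,x)$ is $\inf_{z:z\leq^\mathbf{d}x}F\mathbf{d}z$, so I must produce, for each $\epsilon>0$, some $z\leq^\mathbf{d}x$ with $F\mathbf{d}z<F\mathbf{d}x+\epsilon$. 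Since $Y$ is $\mathbf{d}$-directed and $\mathbf{d}Y\leq\mathbf{d}x$, for each $y\in F$ and each $\delta>0$ there is $z_y\in Y$ with $y\mathbf{d}z_y$ close to $y\mathbf{d}x$; directedness of $Y$ then lets me pick a single $z\in Y$ dominating finitely many such $z_y$ in the $\leq^\mathbf{d}$ sense up to $\epsilon$, and since $Y\leq^\mathbf{d}x$ gives $z\mathbf{d}x=0$ we get $z\leq^\mathbf{d}x$ with $F\mathbf{d}z\le F\mathbf{d}x+\epsilon$ by the triangle inequality. This is the step I expect to require the most care, since the $\mathbf{d}$-maximum condition only bounds $\mathbf{d}Y$ pointwise and one must combine it with directedness to get the finite-$F$ control.

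Next, \eqref{Fd<d<Fd}$\Rightarrow$\eqref{Fd<d<approxFd} is mostly formal: restricting to singletons $F=\{w\}$ gives $\mathbf{d}\circ\leq^\mathbf{d}\,\leq\mathbf{d}$; and $\leq^{\mathcal{F}\mathbf{d}}\subseteq\Phi^{\mathcal{F}\mathbf{d}}\circ\leq^\mathbf{d}$ should follow by reading off the $0$-level sets of \eqref{Fd<d<Fd} together with the definition of $\Phi$ as the $\sup_n$ of $n$-scalings — essentially the observation that $F\leq^{\mathcal{F}\mathbf{d}}x$ means $F\mathbf{d}x=0$, and \eqref{Fd<d<Fd} lets us interpolate an $\leq^\mathbf{d}$-predecessor of $x$ staying within any $\epsilon$ of $F$, which is exactly membership in $\Phi^{\mathcal{F}\mathbf{d}}\circ\leq^\mathbf{d}$. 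For \eqref{Fd<d<approxFd}$\Rightarrow$\eqref{maxlimcts}, the first conjunct $\mathbf{d}\circ\leq^\mathbf{d}\,\leq\mathbf{d}$ certainly implies $\mathbf{d}\circ\leq^\mathbf{d}\,\precapprox\mathbf{d}$; for $\mathbf{d}^\bullet_\circ$-continuity I would verify condition \eqref{dbhcont2} of \autoref{dbhcont}, namely $\mathcal{F}\mathbf{d}\circ\Phi^\mathbf{d}\leq\mathcal{F}\mathbf{d}$, by combining $\leq^{\mathcal{F}\mathbf{d}}\subseteq\Phi^{\mathcal{F}\mathbf{d}}\circ\leq^\mathbf{d}$ with $\mathbf{d}\circ\leq^\mathbf{d}\,\leq\mathbf{d}$: a chase through the definitions of $\Phi^\mathbf{d}$ and $\Phi^{\mathcal{F}\mathbf{d}}$ as the suprema of $n$-scalings, using that $\Phi$ absorbs the error introduced by $\leq^\mathbf{d}$-interpolation. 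Alternatively one could route through \eqref{dbhcont2.5}.

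For \eqref{maxlimcts}$\Rightarrow$\eqref{dbardirected}: given $\mathbf{\underline{d}}$-directed $Y$, I would first apply \autoref{dbhcont}\eqref{dbhcont3} — available since $X$ is $\mathbf{d}^\bullet_\circ$-continuous — not directly to $Y$ but to suitable $\mathbf{\underline{d}}$-Cauchy nets extracted from $Y$, or better, rerun the interpolation argument from the proof of \eqref{dbhcont2.5}$\Rightarrow$\eqref{dbhcont3} with the index set $\Gamma=\mathcal{F}(Y)\times(0,\infty)$ directed by $\supseteq\times>$ to build a net $(y_\gamma)$ with $y\mathbf{d}y_\gamma$ controlled for $y$ in the relevant finite set and $F'\mathbf{d}y_\gamma$ small; then set $Z=\{y_\gamma:\gamma\in\Gamma\}$. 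The extra input $\mathbf{d}\circ\leq^\mathbf{d}\,\precapprox\mathbf{d}$ (equivalently, via \eqref{Fd<d<Fd}, the interpolation giving $\leq^\mathbf{d}$-predecessors rather than merely $\Phi^\mathbf{d}$-close points) is what upgrades the net's approximation to genuine $\mathbf{d}$-directedness of $Z$ and the equalities $Y\underline{\mathbf{d}}=Z\underline{\mathbf{d}}$, $\mathbf{d}Y=\mathbf{d}Z$; the $\underline{\mathbf{d}}$ equalities come from \cite[(7.3)]{Bice2019a} as in the previous proof, writing both sides as $X(\mathbf{d}y-\mathbf{d}Y)_+$ and $X(\mathbf{d}y-\mathbf{d}Z)_+$. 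Finally \eqref{dbardirected}$\Rightarrow$\eqref{maxcts} is the payoff: fix $x\in X$ and apply \eqref{dbardirected} to the $\mathbf{\underline{d}}$-directed set $Y=\{x\}$ (a singleton is $\mathbf{\underline{d}}$-directed since $x\underline{\mathbf{d}}x=0$, $\underline{\mathbf{d}}$ being a hemimetric); this yields $\mathbf{d}$-directed $Z$ with $\{x\}\underline{\mathbf{d}}=Z\underline{\mathbf{d}}$ and $\mathbf{d}x=\mathbf{d}Z$, and I then check that $x=\mathbf{d}$-$\max Z$: the condition $\mathbf{d}Z\leq\mathbf{d}x$ is immediate from $\mathbf{d}Z=\mathbf{d}x$, while $Z\leq^\mathbf{d}x$, i.e. $z\mathbf{d}x=0$ for all $z\in Z$, should follow from $Z\underline{\mathbf{d}}=x\underline{\mathbf{d}}$ together with the relation between $\underline{\mathbf{d}}$ and $\mathbf{d}$ — concretely, $z\underline{\mathbf{d}}y=\sup_w(w\mathbf{d}y-w\mathbf{d}z)_+$, and taking $y=x$, $w=z$ forces $z\mathbf{d}x\le z\underline{\mathbf{d}}x=x\underline{\mathbf{d}}x=0$.
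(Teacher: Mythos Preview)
Your overall strategy is sound but two steps have genuine gaps, and your route through \eqref{maxlimcts}$\Rightarrow$\eqref{dbardirected} diverges from the paper in a way that matters.

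First, in \eqref{dbardirected}$\Rightarrow$\eqref{maxcts}, your argument for $Z\leq^\mathbf{d}x$ is flawed: you claim $z\mathbf{d}x\leq z\underline{\mathbf{d}}x$ by taking $w=z$ in the supremum defining $z\underline{\mathbf{d}}x$, but that only gives $z\underline{\mathbf{d}}x\geq(z\mathbf{d}x-z\mathbf{d}z)_+$, which yields $z\mathbf{d}x\leq z\underline{\mathbf{d}}x$ only when $z\mathbf{d}z=0$. Since $\mathbf{d}$ is not assumed to be a hemimetric (indeed the theorem is trivial in that case), this fails. The fix is immediate from the other hypothesis: evaluate $\mathbf{d}Z=\mathbf{d}x$ at $z$ to get $z\mathbf{d}x=z\mathbf{d}Z$, and $z\mathbf{d}Z=0$ because $Z$ is $\mathbf{d}$-directed and hence $\mathbf{d}$-final.

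Second, and more substantively, your \eqref{maxlimcts}$\Rightarrow$\eqref{dbardirected} is too vague to stand. You propose to set $Z=\{y_\gamma:\gamma\in\Gamma\}$ for a net built as in \autoref{dbhcont}, but passing from a $\mathbf{d}$-(pre-)Cauchy \emph{net} to a $\mathbf{d}$-directed \emph{set} is exactly where the difficulty lies: a finite subset $F\subseteq Z$ may consist of elements $y_{\gamma_1},\ldots,y_{\gamma_n}$ with mutually incomparable indices, and nothing in the Cauchy condition forces the existence of $z\in Z$ with $F\mathbf{d}z$ small. The paper does \emph{not} attempt this implication directly. Instead it proves \eqref{maxlimcts}$\Rightarrow$\eqref{Fd<d<Fd} (a short two-line argument combining $\mathcal{F}\mathbf{d}\circ\Phi^\mathbf{d}\leq\mathcal{F}\mathbf{d}$ with $\mathbf{d}\circ{\leq^\mathbf{d}}\precapprox\mathbf{d}$ to push the interpolating point down to an honest $\leq^\mathbf{d}$-predecessor) and then \eqref{Fd<d<approxFd}$\Rightarrow$\eqref{dbardirected} with the explicit, static construction
\[
Z=\bigcup_{y\in Y}(\leq^\mathbf{d}y).
\]
The two conjuncts of \eqref{Fd<d<approxFd} say precisely that each $(\leq^\mathbf{d}y)$ is $\mathbf{d}$-directed with $\mathbf{d}$-maximum $y$; this immediately gives $\mathbf{d}Z=\mathbf{d}Y$, and $\mathbf{d}$-directedness of $Z$ then follows by routing any finite $F\subseteq Z$ through a finite $F'\subseteq Y$ and using $\underline{\mathbf{d}}$-directedness of $Y$. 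So the paper's implication graph is \eqref{dbardirected}$\Rightarrow$\eqref{maxcts}$\Rightarrow$\eqref{Fd<d<Fd}$\Rightarrow$\eqref{Fd<d<approxFd}$\Rightarrow$\eqref{dbardirected} together with \eqref{Fd<d<Fd}$\Leftrightarrow$\eqref{maxlimcts}, not a single linear cycle. Your \eqref{maxcts}$\Rightarrow$\eqref{Fd<d<Fd} and \eqref{Fd<d<Fd}$\Rightarrow$\eqref{Fd<d<approxFd} are fine and match the paper; the rest should be rerouted as above.
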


\begin{proof}\
\begin{itemize}
\item[\eqref{dbardirected}$\Rightarrow$\eqref{maxcts}]  Take $Y=\{x\}$, for any $x\in X$.

\item[\eqref{maxcts}$\Rightarrow$\eqref{Fd<d<Fd}]  By \eqref{maxcts}, for any $x\in X$, we have $\mathbf{d}$-directed $Y\subseteq X$ with $x=\mathbf{d}$-$\max Y$.  By \cite[(9.1)]{Bice2019a}, for any $F\in\mathcal{F}(X)$, $F(\mathcal{F}\mathbf{d}\circ\mathbin{\leq^\mathbf{d}})x\leq(F\mathbf{d})Y=F(\mathbf{d}Y)=F\mathbf{d}x$.

\item[\eqref{Fd<d<Fd}$\Rightarrow$\eqref{Fd<d<approxFd}]  By \eqref{dbhcont2}, we immediately have $\mathbf{d}\circ\mathbin{\leq^\mathbf{d}}\leq\mathbf{d}$ and $\mathcal{F}\mathbf{d}\circ\mathbin{\leq^\mathbf{d}}\leq\ \leq^{\mathcal{F}\mathbf{d}}$ and hence $\Phi^{\mathcal{F}\mathbf{d}}\circ\mathbin{\leq^\mathbf{d}}=\sup_{n\in\mathbb{N}}n(\mathcal{F}\mathbf{d}\circ\mathbin{\leq^\mathbf{d}})\leq\ \leq^{\mathcal{F}\mathbf{d}}$, in other words $\leq^{\mathcal{F}\mathbf{d}}\ \subseteq\Phi^{\mathcal{F}\mathbf{d}}\circ\mathbf{\leq^\mathbf{d}}$.

\item[\eqref{Fd<d<Fd}$\Rightarrow$\eqref{maxlimcts}]  Note $\mathcal{F}\mathbf{d}\circ\mathbin{\leq^\mathbf{d}}\leq\mathcal{F}\mathbf{d}$ implies $\mathbf{d}\circ\mathbin{\leq^\mathbf{d}}\leq\mathbf{d}$ and hence $\mathbf{d}\circ\mathbin{\leq^\mathbf{d}}\precapprox\mathbf{d}$.   Also $\mathcal{F}\mathbf{d}\circ\mathbin{\leq^\mathbf{d}}\leq\mathcal{F}\mathbf{d}$ implies $\mathcal{F}\mathbf{d}\circ\Phi^\mathbf{d}\leq\mathcal{F}\mathbf{d}$, which means $X$ is $\mathbf{d}^\bullet_\circ$-continuous, by \autoref{dbhcont} \eqref{dbhcont2}.

\item[\eqref{maxlimcts}$\Rightarrow$\eqref{Fd<d<Fd}]  Assuming $\mathbf{d}\circ\mathbin{\leq^\mathbf{d}}\precapprox\mathbf{d}$ and \autoref{dbhcont} \eqref{dbhcont2}, for any $F\in\mathcal{F}(X)$, $x\in X$ and $\epsilon>0$, we have $z\in X$ with $F\mathbf{d}z\leq F\mathbf{d}x+\epsilon$ and $\frac{\mathbf{d}\circ\mathbin{\leq^\mathbf{d}}}{\mathbf{d}}(z\mathbf{d}x)<\epsilon$.  Thus we have $y\leq^\mathbf{d}x$ with $z\mathbf{d}y<\epsilon$ and hence $F\mathbf{d}y\leq F\mathbf{d}z+z\mathbf{d}y\leq F\mathbf{d}x+2\epsilon$.

\item[\eqref{Fd<d<approxFd}$\Rightarrow$\eqref{dbardirected}]  Assume \eqref{Fd<d<approxFd} and let $Z=\bigcup_{x\in Y}(\leq^\mathbf{d}x)$, so $\mathbf{d}Y\leq\mathbf{d}Z$.  Note that
\begin{align*}
\mathbf{d}\circ\mathbin{\leq^\mathbf{d}}\leq\mathbf{d}\qquad&\Leftrightarrow\qquad\text{$x=\mathbf{d}$-$\max\,(\leq^\mathbf{d}x)$, for all }x\in X.\\
\leq^{\mathcal{F}\mathbf{d}}\ \subseteq\Phi^{\mathcal{F}\mathbf{d}}\circ\mathbf{\leq^\mathbf{d}}\qquad&\Leftrightarrow\qquad(\leq^\mathbf{d}x)\text{ is $\mathbf{d}$-directed, for all }x\in X.
\end{align*}
So $\mathbf{d}Z=\inf_{x\in Y}\mathbf{d}(\leq^\mathbf{d}x)\leq\inf_{x\in Y}\mathbf{d}x=\mathbf{d}Y$.  Thus, as in the proof of \cite[(10.4)]{Bice2019a}, $Y\underline{\mathbf{d}}w=X(\mathbf{d}w-\mathbf{d}Y)_+=X(\mathbf{d}w-\mathbf{d}Z)_+=Z\underline{\mathbf{d}}w$.

For any $F\in\mathcal{F}(Z)$, we have $F'\in\mathcal{F}(Y)$ with $F\subseteq\bigcup_{x\in F'}(\leq^\mathbf{d}x)$.  Thus
\begin{align*}
(F\mathbf{d})Z&=\inf_{x\in Y}(F\mathbf{d})(\leq^\mathbf{d}x)=\inf_{x\in Y}F(\mathbf{d}(\leq^\mathbf{d}x))\leq\inf_{x\in Y}F\mathbf{d}x\leq\inf_{x\in Y}F(\mathbf{d}\circ\underline{\mathbf{d}})x\\
&\leq\inf_{x\in Y}\sup_{z\in F}\inf_{y\in F'}(z\mathbf{d}y+y\underline{\mathbf{d}}x)\leq\inf_{x\in Y}\sup_{z\in F}(z\mathbf{d}F'+F'\underline{\mathbf{d}}x)=(F'\underline{\mathbf{d}})Y=0,
\end{align*}
as $Y$ is $\underline{\mathbf{d}}$-directed, showing that $Z$ is $\mathbf{d}$-directed.\qedhere
\end{itemize}
\end{proof}

In particular, taking $\mathbf{d}=\ \ll$ in \autoref{dbhcont} and \autoref{maxctsequiv}, for some transitive relation $\ll\ \subseteq X\times X$, we see that our notions of continuity do indeed agree with the usual interpolation condition defining abstract bases, i.e.
\[X\text{ is $\ll^\bullet_\circ$-continuous}\quad\Leftrightarrow\quad X\text{ is $\ll$-$\max$-continuous}\quad\Leftrightarrow\quad\mathcal{F}\!\ll\ \,\subseteq\,\mathcal{F}\!\ll\circ\ll.\]

In \autoref{maxctsequiv}, we saw that $\mathbf{d}$-$\max$-continuity implies $\mathbf{d}^\bullet_\circ$-continuity.  Conversely, we can derive $\mathbf{d}$-$\max$-continuity (and slightly stronger continuity notions) from $\mathbf{d}^\bullet_\circ$-continuity under certain interpolation conditions, just like in \cite[Corollary 11.8]{Bice2019a}.  Below in \eqref{ctscor1}, $<^\mathbf{d}$ is the strict order defined in \cite[\S5]{Bice2019a} by
\begin{equation}\label{StrictOrder}
x<^\mathbf{d}y\qquad\Leftrightarrow\qquad(x\leq^\mathbf{d})\text{ is a $\underline{\mathbf{d}}^\bullet$-neighbourhood of }y
\end{equation}
and $\mathbf{d}\mathcal{P}\in[0,\infty]^{X\times\mathcal{P}(X)}$ is defined by $x(\mathcal{P}\mathbf{d})Y=x\mathbf{d}Y=\inf_{y\in Y}x\mathbf{d}y$.

\begin{cor}\label{ctscor} Assume $X$ is $\mathbf{d}^\bullet_\circ$-continuous.
\begin{enumerate}
\item\label{ctscor1} $X$ is $<^\mathbf{d}$-$\mathbf{d}$-$\max$-continuous if $\underline{\mathbf{d}}\circ\mathbin{\leq^{\mathbf{d}\mathcal{P}}}\precapprox\mathbf{d}\mathcal{P}$.

\item\label{ctscor2} $X$ is $(\mathbf{d}$-$)\mathbf{d}$-$\max$-continuous if $\mathbf{d}\circ\mathbin{\leq^\mathbf{d}}\precapprox\,\mathbf{d}$.

\item\label{ctscor3} $X$ is $(\mathbf{d}$-$)\mathbf{d}$-$\max$-continuous if $\mathbf{e}\circ\Phi^{\overline{\mathbf{d}}}\,\precapprox\,\mathbf{d}$, $\underline{\mathbf{d}}\vee\overline{\mathbf{d}}^\mathrm{op}\precapprox\mathbf{e}$, $X$ is $\mathbf{e}_\circ$-complete.

\item\label{ctscor4} $X$ is $\leq^\mathbf{d}$-$\mathbf{d}$-$\max$-continuous if $\mathbf{e}\circ\Phi^\mathbf{d}\,\precapprox\,\mathbf{d}$, $\underline{\mathbf{d}}\vee\overline{\mathbf{d}}^\mathrm{op}\precapprox\mathbf{e}$, $X$ is $\mathbf{e}_\circ$-complete\\
\null\hfill and $\overline{\mathbf{d}}^\bullet_\bullet$-separable.
\end{enumerate}
\end{cor}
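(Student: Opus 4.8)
The plan is to handle all four parts through the interpolation characterisations of \autoref{dbhcont} and \autoref{maxctsequiv}. By \autoref{maxctsequiv}, $\mathbf{d}$-$\max$-continuity is exactly $\mathbf{d}^\bullet_\circ$-continuity (which is standing) together with $\mathbf{d}\circ\mathbin{\leq^\mathbf{d}}\precapprox\mathbf{d}$; and tracing the cycle \eqref{maxlimcts}$\Rightarrow$\eqref{Fd<d<Fd}$\Rightarrow$\eqref{Fd<d<approxFd}$\Rightarrow$\eqref{dbardirected} with the singleton $Y=\{x\}$ — note $\underline{\mathbf{d}}$ is always a hemimetric, so $\{x\}$ is $\underline{\mathbf{d}}$-directed — shows the witnessing $\mathbf{d}$-directed set may be taken to be $(\leq^\mathbf{d}x)$. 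So in each part the task reduces to exhibiting, for fixed $x\in X$, a $\mathbf{d}$-directed $Y$ with $x=\mathbf{d}$-$\max Y$ carrying the extra roundedness/strictness demanded by the relation in question. The engine throughout is \autoref{dbhcont} \eqref{dbhcont2}, i.e. $\mathcal{F}\mathbf{d}\circ\Phi^\mathbf{d}\leq\mathcal{F}\mathbf{d}$: for finite $F$ and $\epsilon>0$ it produces an interpolant $z$ with $F\mathbf{d}z<F\mathbf{d}x+\epsilon$ and $z\mathbf{d}x<\epsilon$ — $\mathbf{d}$-close to the finite data but only $\epsilon$-below $x$ — and each hypothesis is precisely what lets one push such interpolants all the way down to, or below, $x$.

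For \eqref{ctscor2} there is nothing more to do: $\mathbf{d}\circ\mathbin{\leq^\mathbf{d}}\precapprox\mathbf{d}$ is the remaining half of \autoref{maxctsequiv} \eqref{maxlimcts}, and the observation above about $(\leq^\mathbf{d}x)$ supplies the parenthetical refinement. For \eqref{ctscor1} I would rerun the argument one level up: since $<^\mathbf{d}$ is defined in \eqref{StrictOrder} through $\underline{\mathbf{d}}^\bullet$-neighbourhoods, the hypothesis $\underline{\mathbf{d}}\circ\mathbin{\leq^{\mathbf{d}\mathcal{P}}}\precapprox\mathbf{d}\mathcal{P}$ is the analogue of $\mathbf{d}\circ\mathbin{\leq^\mathbf{d}}\precapprox\mathbf{d}$ appropriate to those neighbourhoods, and is exactly what is needed to place the interpolants $<^\mathbf{d}$-below $x$ while $x$ remains the $\mathbf{d}$-$\max$.

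For \eqref{ctscor3} and \eqref{ctscor4} the inequality $\mathbf{d}\circ\mathbin{\leq^\mathbf{d}}\precapprox\mathbf{d}$ (in the latter, its $\leq^\mathbf{d}$-rounded refinement) must instead be \emph{manufactured} from completeness, in the spirit of \cite[Corollary 11.8]{Bice2019a}. Given a $\mathbf{d}$-Cauchy net $(x_\lambda)$ with $x_\lambda\barrowc x$ (which exists by $\mathbf{d}^\bullet_\circ$-continuity), the domination $\underline{\mathbf{d}}\vee\overline{\mathbf{d}}^\mathrm{op}\precapprox\mathbf{e}$ forces a suitable subnet to be $\mathbf{e}$-Cauchy, so $\mathbf{e}_\circ$-completeness returns an $\mathbf{e}_\circ$-limit point $y$; the domination $\mathbf{e}\circ\Phi^{\overline{\mathbf{d}}}\precapprox\mathbf{d}$ (resp.\ $\mathbf{e}\circ\Phi^\mathbf{d}\precapprox\mathbf{d}$) then converts $\mathbf{e}$-smallness of distances through $y$ into genuine $\mathbf{d}$-smallness, pinning $y\mathbf{d}x=0$ while keeping $y$ $\mathbf{d}$-close to prescribed finite data. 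Assembling these $y$'s over a directed index set of finite subsets, indices and tolerances — as in the \eqref{dbhcont2.5}$\Rightarrow$\eqref{dbhcont3} step of \autoref{dbhcont} — produces the desired $\mathbf{d}$-directed subset of $(\leq^\mathbf{d}x)$. The $\overline{\mathbf{d}}^\bullet_\bullet$-separability in \eqref{ctscor4} is needed because only $\mathbf{e}\circ\Phi^\mathbf{d}\precapprox\mathbf{d}$ is available there rather than $\mathbf{e}\circ\Phi^{\overline{\mathbf{d}}}\precapprox\mathbf{d}$: a countable $\overline{\mathbf{d}}^\bullet_\bullet$-dense set lets one choose interpolants along a single sequence and diagonalise, so as to control $\mathbf{d}$ directly and not merely $\overline{\mathbf{d}}$.

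The main obstacle I anticipate is confined to \eqref{ctscor3}--\eqref{ctscor4}: coordinating the three demands that (i) passing to an $\mathbf{e}$-Cauchy subnet does not spoil $\mathbf{d}^\bullet_\circ$-convergence to $x$, (ii) the $\mathbf{e}_\circ$-limit points satisfy $y\mathbf{d}x=0$ rather than just $y\mathbf{d}x<\epsilon$, and (iii) the resulting family is $\mathbf{d}$-directed. Individually each is routine, but turning the uniform preorders "$\precapprox$" into workable $\epsilon$-$\delta$ bounds via the gauges $\tfrac{\mathbf{a}}{\mathbf{b}}$ and keeping (i)--(iii) simultaneously in force — with the additional separability-driven diagonalisation for \eqref{ctscor4} — is where the care will be required.
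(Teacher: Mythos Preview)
Your proposal is correct and takes essentially the same approach as the paper. The paper's own proof simply cites \cite[Theorems 11.3, 11.6 and 11.7]{Bice2019a} for parts \eqref{ctscor1}, \eqref{ctscor3}, \eqref{ctscor4} respectively and identifies \eqref{ctscor2} with \autoref{maxctsequiv}~\eqref{maxlimcts}; your sketch is an unpacking of precisely that machinery (you even invoke the adjacent \cite[Corollary 11.8]{Bice2019a}), so the two are in agreement.
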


\begin{proof}
Proving these results relies on using the interpolation conditions to define directed $Y\equiv^\mathbf{d}(x_\lambda)$ from $\mathbf{d}$-Cauchy $(x_\lambda)$.  Specifically, \eqref{ctscor1}, \eqref{ctscor3} and \eqref{ctscor4} follow from \cite{Bice2019a}[Theorems 11.3, 11.6 and 11.7] respectively, while \eqref{ctscor2} is just \autoref{maxctsequiv} \eqref{maxlimcts}, stated here again for comparison.
\end{proof}

If we require the $\mathbf{d}$-Cauchy or $\mathbf{d}$-directed subsets in \autoref{ctsdef} to lie in some subset $B$ of $X$, we get a generalised notion of a basis \textendash\, see \cite{Goubault2013} Definition 5.1.22.

\begin{dfn}\label{basisdef}
For any $B\subseteq X$, topology $\mathcal{T}$ on $X$ and $\mathcal{R}\subseteq X\times\mathcal{P}(X)$, define
\begin{align*}
B\text{ is a \emph{$\mathbf{d}$-$\mathcal{T}\!$-basis}}\quad&\Leftrightarrow\quad\forall x\in X\ \exists\text{$\mathbf{d}$-Cauchy }(x_\lambda)\subseteq B\ (x_\lambda\xrightarrow{\mathcal{T}}x).\\
B\text{ is a \emph{$\mathbf{d}$-$\mathcal{R}$-basis}}\quad&\Leftrightarrow\quad\forall x\in X\ \exists\text{$\mathbf{d}$-directed }Y\subseteq B\ (x\mathcal{R}Y).
\end{align*}
\end{dfn}

Bases can be characterised as in \autoref{dbhcont} and \autoref{maxctsequiv}, just with interpolation in $B$ rather than $X$.  If we already know that $X$ itself is continuous, then we can characterise bases with even weaker conditions.

Note $\mathbf{d}\circ B\circ\mathbf{d}$ below is like $\mathbf{d}\circ\mathbf{d}$, just with interpolation in $B$ instead of $X$, i.e.
\[x(\mathbf{d}\circ B\circ\mathbf{d})y=\inf_{b\in B}(x\mathbf{d}b+b\mathbf{d}y).\]
Equivalently, $\mathbf{d}\circ B\circ\mathbf{d}$ is the same as $\mathbf{d}\ \circ=_B\circ\ \mathbf{d}$, where $=_B$ is the relation $x=y\in B$ identified with its characteristic function.

\begin{prp}
If $X$ is $\mathbf{d}^\bullet_\circ$-continuous then, for any $B\subseteq X$,
\[B\text{ is a $\mathbf{d}^\bullet_\circ$-basis}\qquad\Leftrightarrow\qquad\mathbf{d}\circ B\circ\mathbf{d}\precapprox\mathbf{d}\qquad\Leftrightarrow\qquad B\text{ is $\mathbf{d}^\bullet_\bullet$-dense}.\]
\end{prp}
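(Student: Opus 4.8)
Write $\mathrm{(i)}$, $\mathrm{(ii)}$, $\mathrm{(iii)}$ for the three conditions in the order listed. The plan is to prove $\mathrm{(i)}\Rightarrow\mathrm{(ii)}$ and $\mathrm{(i)}\Rightarrow\mathrm{(iii)}$, which are immediate, and then $\mathrm{(ii)}\Rightarrow\mathrm{(i)}$ and $\mathrm{(iii)}\Rightarrow\mathrm{(i)}$, where the real work lies. The two tools I would lean on are: the characterisation of Smyth convergence recorded just before \autoref{dbhcont} — so that a $\mathbf{d}$-Cauchy $(b_\lambda)$ with $b_\lambda\barrowc x$ satisfies $c\mathbf{d}b_\lambda\to c\mathbf{d}x$ for all $c$ and, as in the proof of \autoref{dbhcont}, also $b_\lambda\mathbf{d}x\to0$ — and the interpolation form $\mathcal{F}\mathbf{d}\circ\Phi^\mathbf{d}\leq\mathcal{F}\mathbf{d}$ of $\mathbf{d}^\bullet_\circ$-continuity from \autoref{dbhcont}, in particular its consequence $\mathbf{d}\circ\Phi^\mathbf{d}\leq\mathbf{d}$. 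I would also unwind $\mathrm{(iii)}$ pointwise: $B$ is $\mathbf{d}^\bullet_\bullet$-dense iff every nonempty basic $\mathbf{d}^\bullet_\bullet$-open set meets $B$, i.e. for every $x\in X$, finite $C,D\subseteq X$ and $\epsilon>0$ with $c\mathbf{d}x<\epsilon$ for $c\in C$ and $x\mathbf{d}d<\epsilon$ for $d\in D$, there is $b\in B$ with $c\mathbf{d}b<\epsilon$ for $c\in C$ and $b\mathbf{d}d<\epsilon$ for $d\in D$. For $\mathrm{(i)}\Rightarrow\mathrm{(ii)}$: picking $\mathbf{d}$-Cauchy $(b_\lambda)\subseteq B$ with $b_\lambda\barrowc x$, we get $y(\mathbf{d}\circ B\circ\mathbf{d})x\leq y\mathbf{d}b_\lambda+b_\lambda\mathbf{d}x\to y\mathbf{d}x$ for every $y$, so even $\mathbf{d}\circ B\circ\mathbf{d}\leq\mathbf{d}$. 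For $\mathrm{(i)}\Rightarrow\mathrm{(iii)}$: the same net satisfies $c\mathbf{d}b_\lambda\to c\mathbf{d}x$ and $b_\lambda\mathbf{d}d\leq b_\lambda\mathbf{d}x+x\mathbf{d}d\to x\mathbf{d}d$, so it eventually enters any basic $\mathbf{d}^\bullet_\bullet$-neighbourhood of $x$, placing $x$ in the $\mathbf{d}^\bullet_\bullet$-closure of $B$.

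For the converses I would isolate the single interpolation property
\[(\ast)\qquad \forall x\in X\ \forall F\in\mathcal{F}(X)\ \forall\epsilon>0\ \exists b\in B\quad\big(b\mathbf{d}x<\epsilon\ \text{ and }\ c\mathbf{d}b<c\mathbf{d}x+\epsilon\ \ \forall c\in F\big)\]
and prove that $\mathrm{(ii)}$ and $\mathrm{(iii)}$ each imply $(\ast)$ (using $\mathbf{d}^\bullet_\circ$-continuity of $X$), and that $(\ast)$ implies $\mathrm{(i)}$. To prove $(\ast)$, first invoke $\mathbf{d}^\bullet_\circ$-continuity to get a $\mathbf{d}$-Cauchy net Smyth-converging to $x$, hence a single $y\in X$ with $c\mathbf{d}y<c\mathbf{d}x+\tfrac12\epsilon$ for all $c\in F$ and with $y\mathbf{d}x$ as small as desired. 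Under $\mathrm{(ii)}$, since $y\mathbf{d}x$ is small the uniform preorder $\mathbf{d}\circ B\circ\mathbf{d}\precapprox\mathbf{d}$ hands us $b\in B$ with $y\mathbf{d}b+b\mathbf{d}x<\tfrac12\epsilon$, and $c\mathbf{d}b\leq c\mathbf{d}y+y\mathbf{d}b$ finishes. Under $\mathrm{(iii)}$, I would instead observe that $\{z:y\mathbf{d}z<\tfrac12\epsilon\}\cap\{z:z\mathbf{d}x<\tfrac12\epsilon\}$ is a $\mathbf{d}^\bullet_\bullet$-open set, and that it is nonempty because $\mathbf{d}\circ\Phi^\mathbf{d}\leq\mathbf{d}$ gives $y(\mathbf{d}\circ n\mathbf{d})x\leq y\mathbf{d}x$, so for large $n$ there is $z$ with $y\mathbf{d}z$ small and $z\mathbf{d}x$ yet smaller; density then puts some $b\in B$ into this set, and again $c\mathbf{d}b\leq c\mathbf{d}y+y\mathbf{d}b$ does it.

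It remains to deduce $\mathrm{(i)}$ from $(\ast)$, which I would do by the net construction from the proof of \autoref{dbhcont}. Index by $\Gamma=\mathcal{F}(X)\times(0,\infty)$ directed by $\subseteq\times>$; for $\gamma=(F,\epsilon)$ pick $b_\gamma\in B$ as in $(\ast)$. Then $b_\gamma\mathbf{d}x\to0$, and for each $c$ we get $\limsup_\gamma c\mathbf{d}b_\gamma\leq c\mathbf{d}x$ from the second clause of $(\ast)$ and $\liminf_\gamma c\mathbf{d}b_\gamma\geq c\mathbf{d}x$ from $c\mathbf{d}x\leq c\mathbf{d}b_\gamma+b_\gamma\mathbf{d}x$; hence $c\mathbf{d}b_\gamma\to c\mathbf{d}x$ for all $c$, i.e. $b_\gamma\barrowc x$. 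To see $(b_\gamma)$ is $\mathbf{d}$-pre-Cauchy, compute $\limsup_\gamma\limsup_\delta b_\gamma\mathbf{d}b_\delta$: for fixed $\gamma$ one may pass to the cofinal tail of $\delta=(G,\eta)$ with $b_\gamma\in G$, where $b_\gamma\mathbf{d}b_\delta<b_\gamma\mathbf{d}x+\eta$, so the inner $\limsup$ is $\leq b_\gamma\mathbf{d}x$ and the double $\limsup$ is $\leq\limsup_\gamma b_\gamma\mathbf{d}x=0$. By \cite[Theorem 7.3 (1)]{Bice2019a} it then has a $\mathbf{d}$-Cauchy subnet, which lies in $B$ and still $\barrowc$-converges to $x$, giving $\mathrm{(i)}$. (If a three-term cycle is preferred, the same "choose $y$ near $x$, make a nonempty $\mathbf{d}^\bullet_\bullet$-open set via $\mathbf{d}\circ\Phi^\mathbf{d}\leq\mathbf{d}$, apply density" argument also gives $\mathrm{(iii)}\Rightarrow\mathrm{(ii)}$ directly.)

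The hard part is the implication $\mathrm{(iii)}\Rightarrow(\ast)$, and more precisely getting a \emph{nonempty} $\mathbf{d}^\bullet_\bullet$-open set to apply density to: because $\mathbf{d}$ is merely a distance, $x\mathbf{d}x$ (and $y\mathbf{d}y$) may be large, so a small $\mathbf{d}^\bullet_\bullet$-ball centred at $x$ or $y$ can be empty and density says nothing about it. The point is that $\mathbf{d}^\bullet_\circ$-continuity in interpolation form, $\mathbf{d}\circ\Phi^\mathbf{d}\leq\mathbf{d}$, manufactures a point inside the intersection of an upper ball around $y$ with a lower ball around $x$, certifying that this particular $\mathbf{d}^\bullet_\bullet$-open set is nonempty; density then does the rest. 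The remaining bookkeeping — the exact description of $\mathbf{d}^\bullet_\bullet$ as generated by the balls $\{z:c\mathbf{d}z<r\}$ and $\{z:z\mathbf{d}c<r\}$, and the pre-Cauchy verification for $(b_\gamma)$ — is routine and handled as above.
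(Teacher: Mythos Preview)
Your proof is correct, but it takes a noticeably longer route than the paper's, and the difference is worth pointing out.

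The paper organises the cycle as $\mathrm{(i)}\Leftrightarrow\mathrm{(ii)}$ and $\mathrm{(i)}\Rightarrow\mathrm{(iii)}\Rightarrow\mathrm{(ii)}$. The step you work hardest on, $\mathrm{(iii)}\Rightarrow\mathrm{(i)}$ via $(\ast)$, is replaced in the paper by a one-line $\mathrm{(iii)}\Rightarrow\mathrm{(ii)}$: if $B$ is $\mathbf{d}^\bullet_\bullet$-dense then $\mathbf{d}\circ B\circ\mathbf{d}=\mathbf{d}\circ\mathbf{d}$, and $\mathbf{d}\circ\mathbf{d}\leq\mathbf{d}\circ\Phi^{\mathbf{d}}\leq\mathbf{d}$ by $\mathbf{d}^\bullet_\circ$-continuity. (The equality $\mathbf{d}\circ B\circ\mathbf{d}=\mathbf{d}\circ\mathbf{d}$ is immediate: any $z$ realising $x\mathbf{d}z+z\mathbf{d}y<r$ sits in a nonempty $\mathbf{d}^\bullet_\bullet$-open set which density pushes into $B$.) So your ``manufacture a nonempty $\mathbf{d}^\bullet_\bullet$-open set from $\mathbf{d}\circ\Phi^{\mathbf{d}}\leq\mathbf{d}$'' idea is exactly right, but it can be packaged much more cheaply.

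For $\mathrm{(ii)}\Rightarrow\mathrm{(i)}$ the paper also avoids building a net. It works at the $\mathcal{F}\mathbf{d}$-level: from $\mathbf{d}\circ B\circ\mathbf{d}\precapprox\mathbf{d}$ one gets $<^{\mathbf{d}}_\delta\ \subseteq\ <^{\mathbf{d}}_\epsilon\circ B\circ<^{\mathbf{d}}_\epsilon$, whence $\mathcal{F}\mathbf{d}\geq\mathcal{F}\mathbf{d}\circ\Phi^{\mathbf{d}}\geq\mathcal{F}\mathbf{d}\circ<^{\mathbf{d}}_\delta\geq\mathcal{F}\mathbf{d}\circ<^{\mathbf{d}}_\epsilon\circ B\circ<^{\mathbf{d}}_\epsilon$, and letting $\epsilon\to0$ gives $\mathcal{F}\mathbf{d}\circ B\circ\Phi^{\mathbf{d}}\leq\mathcal{F}\mathbf{d}$, which is the basis-level interpolation form of \autoref{dbhcont}. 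Your $(\ast)$ is exactly this inequality unpacked, and your net construction is essentially a reproof of the (unstated but remarked-upon) basis analogue of \autoref{dbhcont}. That makes your argument more self-contained, since the paper quietly invokes that analogue; but it also means you are redoing work that the paper regards as already done.
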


\begin{proof}
If $B$ is a is $\mathbf{d}^\bullet_\circ$-basis then, as in \autoref{dbhcont}, we have $\mathcal{F}\mathbf{d}\circ B\circ\Phi^\mathbf{d}\leq\mathcal{F}\mathbf{d}$ which certainly implies $\mathbf{d}\circ B\circ\mathbf{d}\precapprox\mathbf{d}$.  Conversely, if $\mathbf{d}\circ B\circ\mathbf{d}\precapprox\mathbf{d}$ then, for all $\epsilon>0$, we have $\delta>0$ such that $<^\mathbf{d}_\delta\ \subseteq\ <^\mathbf{d}_\epsilon\circ\,B\,\circ<^\mathbf{d}_\epsilon$ and hence 
\[\mathcal{F}\mathbf{d}\ \geq\ \mathcal{F}\mathbf{d}\circ\Phi^\mathbf{d}\ \geq\ \mathcal{F}\mathbf{d}\ \circ<^\mathbf{d}_\delta\ \geq\ \mathcal{F}\mathbf{d}\ \circ<^\mathbf{d}_\epsilon\circ\ B\ \circ<^\mathbf{d}_\epsilon.\]
As $\epsilon>0$ was arbitrary, $\mathcal{F}\mathbf{d}\geq\mathcal{F}\mathbf{d}\circ B\circ\Phi^\mathbf{d}$ so $B$ is a $\mathbf{d}^\bullet_\circ$-basis.

Assume again that $B$ is a $\mathbf{d}^\bullet_\circ$-basis and take non-empty $\mathbf{d}^\bullet_\bullet$-open $O\subseteq X$.  So for any $x\in O$, we have $y_1,\ldots,y_m,z_1,\ldots,z_n\in X$ and $r_1,\ldots,r_m,s_1,\ldots,s_n>0$ with
\[x\in N=\bigcap_{1\leq j\leq m}(y_j)_{r_j}^\bullet\cap\bigcap_{1\leq k\leq n}(z_k)_\bullet^{s_k}\subseteq O.\]
As in the proof of \autoref{dbhcont} \eqref{dbhcont2.5}$\Rightarrow$\eqref{dbhcont3} we then have $b\in B$ with $y_j\mathbf{d}b<r_j$, for $1\leq j\leq m$, and $b\mathbf{d}x<\min\limits_{1\leq k\leq n}s_k-x\mathbf{d}z_k$.  Thus $b\in N\subseteq O$, i.e. $B$ is $\mathbf{d}^\bullet_\bullet$-dense.

Conversely, if $B$ is $\mathbf{d}^\bullet_\bullet$-dense then $\mathbf{d}\circ B\circ\mathbf{d}=\mathbf{d}\circ\mathbf{d}\leq\mathbf{d}$.
\end{proof}

Let $\mathbf{d}_B$ denote the restriction of $\mathbf{d}$ to $B\times B$.

\begin{prp}\label{reflexbasis}
If $B\subseteq X$ is a $\mathbf{d}^\bullet_\circ$-basis then $\overline{\mathbf{d}|_B}=\overline{\mathbf{d}}|_B$ and $\underline{\mathbf{d}|_B}=\underline{\mathbf{d}}|_B$.
\end{prp}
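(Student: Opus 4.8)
The plan is to settle the trivial halves of both identities and then obtain the two reverse inequalities from a short triangle-inequality argument driven by the interpolation property coming from the basis hypothesis. Writing out the definitions, $b\overline{\mathbf{d}}b'=\sup_{x\in X}(b\mathbf{d}x-b'\mathbf{d}x)_+$ and $b\underline{\mathbf{d}}b'=\sup_{x\in X}(x\mathbf{d}b'-x\mathbf{d}b)_+$ for $b,b'\in B$, while $b\overline{\mathbf{d}|_B}b'$ and $b\underline{\mathbf{d}|_B}b'$ are the same suprema taken over $B$ instead of $X$; hence $\overline{\mathbf{d}|_B}\leq\overline{\mathbf{d}}|_B$ and $\underline{\mathbf{d}|_B}\leq\underline{\mathbf{d}}|_B$ at once. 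For the reverse inequalities the only input I would use is that, as a $\mathbf{d}^\bullet_\circ$-basis, $B$ satisfies $\mathcal{F}\mathbf{d}\circ B\circ\Phi^\mathbf{d}\leq\mathcal{F}\mathbf{d}$ (argued as in \autoref{dbhcont} but with the interpolation carried out inside $B$, exactly as in the proof of the previous proposition), and in particular $\mathbf{d}\circ B\circ\Phi^\mathbf{d}\leq\mathbf{d}$; unwinding this at a pair $(z,y)$, for every $z,y\in X$ and $\epsilon>0$ there is $c\in B$ with $c<^\mathbf{d}_\epsilon y$ and $z\mathbf{d}c<z\mathbf{d}y+\epsilon$.

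For $\overline{\mathbf{d}}$, fix $b,b'\in B$ and $x\in X$; it is enough to bound $(b\mathbf{d}x-b'\mathbf{d}x)_+$ by $b\overline{\mathbf{d}|_B}b'$, and the bound is trivial unless $b\mathbf{d}x>b'\mathbf{d}x$, so assume this (whence $b'\mathbf{d}x<\infty$). Given $\epsilon>0$, choose $c\in B$ as above for $z=b'$, $y=x$, so $c\mathbf{d}x<\epsilon$ and $b'\mathbf{d}c<b'\mathbf{d}x+\epsilon$. By \eqref{triangle}, $b\mathbf{d}x\leq b\mathbf{d}c+c\mathbf{d}x$, so $b\mathbf{d}c\geq b\mathbf{d}x-\epsilon$, and therefore
\[b\overline{\mathbf{d}|_B}b'\ \geq\ (b\mathbf{d}c-b'\mathbf{d}c)_+\ \geq\ b\mathbf{d}c-b'\mathbf{d}c\ \geq\ (b\mathbf{d}x-b'\mathbf{d}x)-2\epsilon.\]
Letting $\epsilon\to0$ and then taking the supremum over $x$ gives $\overline{\mathbf{d}}|_B\leq\overline{\mathbf{d}|_B}$, hence equality.

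For $\underline{\mathbf{d}}$, fix $b,b'\in B$ and $x\in X$; it is enough to bound $(x\mathbf{d}b'-x\mathbf{d}b)_+$ by $b\underline{\mathbf{d}|_B}b'$, and we may assume $x\mathbf{d}b'>x\mathbf{d}b$ (so $x\mathbf{d}b<\infty$). Given $\epsilon>0$, choose $c\in B$ as above for $z=x$, $y=b$, so $c\mathbf{d}b<\epsilon$ and $x\mathbf{d}c<x\mathbf{d}b+\epsilon$. By \eqref{triangle}, $x\mathbf{d}b'\leq x\mathbf{d}c+c\mathbf{d}b'\leq x\mathbf{d}b+\epsilon+c\mathbf{d}b'$, while $c\mathbf{d}b'\leq(c\mathbf{d}b'-c\mathbf{d}b)_++c\mathbf{d}b\leq b\underline{\mathbf{d}|_B}b'+\epsilon$, so $x\mathbf{d}b'\leq x\mathbf{d}b+b\underline{\mathbf{d}|_B}b'+2\epsilon$; letting $\epsilon\to0$ and taking the supremum over $x$ gives $\underline{\mathbf{d}}|_B\leq\underline{\mathbf{d}|_B}$. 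This $\underline{\mathbf{d}}$ inequality is the step I expect to be the real obstacle: imitating the $\overline{\mathbf{d}}$ proof with a $\mathbf{d}$-Cauchy net $(b_\lambda)\subseteq B$ converging to $x$ in $\mathbf{d}^\bullet_\circ$ does not work, since that convergence only delivers $b_\lambda\mathbf{d}x\to0$ rather than $x\mathbf{d}b_\lambda\to0$, and the triangle estimate for $x\mathbf{d}b'$ then retains an irreducible $x\mathbf{d}x$ error; going through $\mathbf{d}\circ B\circ\Phi^\mathbf{d}\leq\mathbf{d}$ — i.e. choosing $c$ with $c\mathbf{d}b$ small rather than with $x\mathbf{d}c$ small — is what removes it, the small quantity $c\mathbf{d}b$ being absorbed harmlessly into $(c\mathbf{d}b'-c\mathbf{d}b)_+\leq b\underline{\mathbf{d}|_B}b'$.
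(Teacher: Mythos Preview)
Your argument is correct. Both the $\overline{\mathbf{d}}$ and $\underline{\mathbf{d}}$ halves go through exactly as written: the interpolation $\mathbf{d}\circ B\circ\Phi^\mathbf{d}\leq\mathbf{d}$ does follow from the basis hypothesis, and your triangle estimates are clean (including the infinite-value edge cases, which your reduction to $b\mathbf{d}x>b'\mathbf{d}x$ and $x\mathbf{d}b'>x\mathbf{d}b$ handles).

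The paper takes a shorter, more structural route. From the preceding proposition it records $\mathbf{d}\circ B\circ\mathbf{d}\leq\mathbf{d}$, then observes that $\overline{\mathbf{d}}\leq\mathbf{d}$ and $\underline{\mathbf{d}}\leq\mathbf{d}$ (true for any distance) yield $\overline{\mathbf{d}}\circ B\circ\mathbf{d}\leq\mathbf{d}$ and $\mathbf{d}\circ B\circ\underline{\mathbf{d}}\leq\mathbf{d}$, and finally invokes \cite[Proposition~2.5]{Bice2019a} to conclude $\overline{\mathbf{d}|_B}=\overline{\mathbf{d}}|_B$ and $\underline{\mathbf{d}|_B}=\underline{\mathbf{d}}|_B$. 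So the paper packages your epsilon computation into a black-box lemma from the companion paper and only needs the weaker interpolation $\mathbf{d}\circ B\circ\mathbf{d}\leq\mathbf{d}$, whereas you use the sharper $\Phi^\mathbf{d}$ version to make $c\mathbf{d}y$ small outright. What your approach buys is self-containment: a reader without \cite{Bice2019a} at hand can follow every step. What the paper's approach buys is brevity and a cleaner logical dependency, since the same cited proposition presumably handles many such restriction statements uniformly.
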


\begin{proof}
As noted above, if $B\subseteq X$ is a $\mathbf{d}^\bullet_\circ$-basis then $\mathbf{d}\circ B\circ\mathbf{d}\leq\mathbf{d}$.  Thus $\mathbf{d}\circ B\circ\underline{\mathbf{d}}\leq\mathbf{d}$ and $\overline{\mathbf{d}}\circ B\circ\mathbf{d}\leq\mathbf{d}$ so \cite[Proposition 2.5]{Bice2019a} yields $\overline{\mathbf{d}|_B}=\overline{\mathbf{d}}|_B$ and $\underline{\mathbf{d}|_B}=\underline{\mathbf{d}}|_B$.
\end{proof}

If we join the $\mathbf{d}^\bullet$-topology with $(\leq^\mathbf{d})_\bullet$ instead of $\mathbf{d}_\bullet$, we get an analogous characterization of $\mathbf{d}$-$\max$-bases.  We omit the proof, which is much the same as above.

\begin{prp}
If $X$ is $\mathbf{d}$-$\max$-continuous then, for any $B\subseteq X$,
\[B\text{ is a $\mathbf{d}$-$\max$-basis}\quad\Leftrightarrow\quad\mathbf{d}\,\circ\,B\ \circ\leq^\mathbf{d}\ \precapprox\,\mathbf{d}\quad\Leftrightarrow\quad B\text{ is $(\mathbf{d}^\bullet\vee(\leq^\mathbf{d})_\bullet)$-dense}.\]
\end{prp}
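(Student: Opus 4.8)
The plan is to mirror the proof of the preceding proposition (the $\mathbf{d}^\bullet_\circ$-basis characterization), replacing the lower hole topology $\mathbf{d}_\circ$ by the Alexandroff topology of $\leq^\mathbf{d}$, i.e. replacing $\Phi^\mathbf{d}$ by $\leq^\mathbf{d}$ throughout. The logical skeleton is a cycle of implications: first ``$B$ is a $\mathbf{d}$-$\max$-basis'' $\Rightarrow$ ``$\mathbf{d}\circ B\circ\leq^\mathbf{d}\precapprox\mathbf{d}$'', then ``$\mathbf{d}\circ B\circ\leq^\mathbf{d}\precapprox\mathbf{d}$'' $\Rightarrow$ ``$B$ is $(\mathbf{d}^\bullet\vee(\leq^\mathbf{d})_\bullet)$-dense'', and finally density $\Rightarrow$ ``$B$ is a $\mathbf{d}$-$\max$-basis'', the last step being where the hypothesis that $X$ itself is $\mathbf{d}$-$\max$-continuous is used.

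For the first implication, I would argue exactly as in \autoref{maxctsequiv}: if $B$ is a $\mathbf{d}$-$\max$-basis then for each $x$ there is a $\mathbf{d}$-directed $Y\subseteq B$ with $x=\mathbf{d}$-$\max Y$, and then for any $F\in\mathcal{F}(X)$ one computes $F(\mathcal{F}\mathbf{d}\circ B\circ\leq^\mathbf{d})x\leq(F\mathbf{d})Y=F(\mathbf{d}Y)=F\mathbf{d}x$ using \cite[(9.1)]{Bice2019a} and $\mathbf{d}$-directedness of $Y$ inside $B$; this gives $\mathcal{F}\mathbf{d}\circ B\circ\leq^\mathbf{d}\leq\mathcal{F}\mathbf{d}$, which a fortiori yields $\mathbf{d}\circ B\circ\leq^\mathbf{d}\precapprox\mathbf{d}$. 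For the second implication, from $\mathbf{d}\circ B\circ\leq^\mathbf{d}\precapprox\mathbf{d}$ one gets for each $\epsilon>0$ a $\delta>0$ with $<^\mathbf{d}_\delta\ \subseteq\ <^\mathbf{d}_\epsilon\circ B\circ\leq^\mathbf{d}$; feeding a basic $(\mathbf{d}^\bullet\vee(\leq^\mathbf{d})_\bullet)$-neighbourhood $N=\bigcap_j(y_j)^\bullet_{r_j}\cap\bigcap_k(z_k)_\bullet$ of a point $x$ (where the $z_k$-factor is a $(\leq^\mathbf{d})_\bullet$-ball, so really $z_k\leq^\mathbf{d}x$) through this inclusion, exactly as in the $\mathbf{d}^\bullet_\circ$-basis proof, produces $b\in B$ with $y_j\mathbf{d}b<r_j$ for all $j$ and $z_k\leq^\mathbf{d}b$ for all $k$, so $b\in N$; hence $B$ meets every nonempty $(\mathbf{d}^\bullet\vee(\leq^\mathbf{d})_\bullet)$-open set.

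For the closing implication I would use $\mathbf{d}$-$\max$-continuity of $X$ via \autoref{maxctsequiv} \eqref{Fd<d<Fd}, giving $\mathcal{F}\mathbf{d}\circ\leq^\mathbf{d}\leq\mathcal{F}\mathbf{d}$: $(\mathbf{d}^\bullet\vee(\leq^\mathbf{d})_\bullet)$-density of $B$ says $\mathbf{d}\circ B\circ\leq^\mathbf{d}=\mathbf{d}\circ\leq^\mathbf{d}$ (interpolating through $B$ costs nothing when $B$ is dense in the relevant topology), and combining with $\mathcal{F}\mathbf{d}\circ\leq^\mathbf{d}\leq\mathcal{F}\mathbf{d}$ and the interpolation argument of \autoref{maxctsequiv} one upgrades $\mathbf{d}\circ B\circ\leq^\mathbf{d}=\mathbf{d}\circ\leq^\mathbf{d}$ to $\mathcal{F}\mathbf{d}\circ B\circ\leq^\mathbf{d}\leq\mathcal{F}\mathbf{d}$, the interpolation-in-$B$ condition characterizing $\mathbf{d}$-$\max$-bases (the analog of \autoref{maxctsequiv} \eqref{Fd<d<Fd} relativized to $B$). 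The main obstacle — and the one point where the argument is genuinely more than a transcription — is verifying that $(\mathbf{d}^\bullet\vee(\leq^\mathbf{d})_\bullet)$-density really does upgrade $\mathbf{d}\circ B\circ\leq^\mathbf{d}$ back to agree with $\mathbf{d}\circ\leq^\mathbf{d}$ (not merely $\mathbf{d}\circ\mathbf{d}$ as in the symmetric $\mathbf{d}^\bullet_\bullet$ case), i.e. that the asymmetry between the $\mathbf{d}^\bullet$-ball side and the $(\leq^\mathbf{d})_\bullet$-side is harmless; this is exactly the step the author flags as ``much the same'', and it hinges on the fact that $(\leq^\mathbf{d})_\bullet$-balls $z_\bullet=\{w:z\leq^\mathbf{d}w\}$ are honest neighbourhoods in this finer topology and that $\leq^\mathbf{d}$ composed with itself is still $\leq^\mathbf{d}$ by transitivity of $\leq^\mathbf{d}$.
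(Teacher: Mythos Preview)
Your approach is essentially what the paper intends (the paper omits this proof, saying only that it is ``much the same'' as the preceding proposition), but there is one genuine slip: you have the orientation of the $(\leq^\mathbf{d})_\bullet$-balls reversed. In the paper's conventions, $\mathbf{e}_\bullet$ is generated by lower balls $c_\bullet^r=\{w:w\mathbf{e}c<r\}$ (cf.\ the sets $(z_k)_\bullet^{s_k}$ in the preceding proof), so for the relation $\leq^\mathbf{d}$ the basic open $(z_k)_\bullet$ is the \emph{downward} set $\{w:w\leq^\mathbf{d}z_k\}$; thus $x\in(z_k)_\bullet$ means $x\leq^\mathbf{d}z_k$, not $z_k\leq^\mathbf{d}x$. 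This matters for the density step: the interpolant $b\in B$ produced by $\mathbf{d}\circ B\circ\leq^\mathbf{d}\precapprox\mathbf{d}$ (or its $\mathcal{F}\mathbf{d}$-upgrade) satisfies $b\leq^\mathbf{d}x$, and from $b\leq^\mathbf{d}x\leq^\mathbf{d}z_k$ transitivity yields $b\leq^\mathbf{d}z_k$, placing $b$ in each $(z_k)_\bullet$. With your reading you would instead need $z_k\leq^\mathbf{d}b$ from $z_k\leq^\mathbf{d}x$ and $b\leq^\mathbf{d}x$, which does not follow. Once the direction is corrected, your transitivity remark is exactly the point.

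A smaller structural issue: your ``middle $\Rightarrow$ dense'' step appeals to ``exactly as in the $\mathbf{d}^\bullet_\circ$-basis proof'', but that proof actually shows \emph{basis} $\Rightarrow$ dense, using the stronger $\mathcal{F}\mathbf{d}\circ B\circ\Phi^\mathbf{d}\leq\mathcal{F}\mathbf{d}$ to handle all $y_j$ simultaneously; the singleton middle condition alone does not obviously give a single $b$ working for a finite family $F$. The cleanest route (and the one the paper has in mind) mirrors the preceding proposition exactly: first basis $\Leftrightarrow$ middle, the backward direction using $\mathbf{d}$-$\max$-continuity $\mathcal{F}\mathbf{d}\circ\leq^\mathbf{d}\leq\mathcal{F}\mathbf{d}$ to upgrade $\mathbf{d}\circ B\circ\leq^\mathbf{d}\precapprox\mathbf{d}$ to $\mathcal{F}\mathbf{d}\circ B\circ\leq^\mathbf{d}\leq\mathcal{F}\mathbf{d}$; then basis $\Rightarrow$ dense via the argument above; and finally dense $\Rightarrow$ middle via $\mathbf{d}\circ B\circ\leq^\mathbf{d}=\mathbf{d}\circ\leq^\mathbf{d}\leq\mathbf{d}$, just as the preceding proof ends with $\mathbf{d}\circ B\circ\mathbf{d}=\mathbf{d}\circ\mathbf{d}\leq\mathbf{d}$.
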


\section{Way-Below Distances}\label{WBD}

Next we consider distance analogs of the way-below relation.

\begin{dfn}
For any topology $\mathcal{T}$ on $X$ and relation $\mathcal{R}\subseteq X\times\mathcal{P}(X)$, define
\begin{align}
\label{Tway}\mathcal{T}\mathbf{d}(x,y)&=\sup\{(x\mathbf{d}(z_\lambda)-y\mathbf{d}z)_+:(z_\lambda)\text{ is $\mathbf{d}$-Cauchy and }z_\lambda\xrightarrow{\mathcal{T}}z\}.\\
\label{Rway}\mathcal{R}\mathbf{d}(x,y)&=\sup\{(x\mathbf{d}Z-y\mathbf{d}z)_+:Z\text{ is $\mathbf{d}$-directed and $z\mathcal{R}Z$}\}.
\end{align}
\end{dfn}

Again, we abbreviate duplicate distance symbols, e.g. $\mathbf{d}^\circ_\circ\mathbf{d}$ and $\mathbf{d}$-$\sup\mathbf{d}$ are written as $^\circ_\circ\mathbf{d}$ and $\sup\mathbf{d}$, which are the cases of primary interest.  Indeed, $^\bullet_\circ\mathbf{d}$ and $\max\mathbf{d}$ coincide with $\overline{\mathbf{d}}$, as long as $X$ is continuous w.r.t. $\mathbf{d}^\bullet_\circ$ and $\mathbf{d}$-$\max$ respectively.

Way-below distances are essentially inverse to upper/lower hemimetrics, as we will see very shortly.  The first thing to note is that, while $\overline{\mathbf{d}}$ and $\underline{\mathbf{d}}$ turn a general distance $\mathbf{d}$ into a weaker hemimetric, $\mathcal{T}\mathbf{d}$ and $\mathcal{R}\mathbf{d}$ instead turn a hemimetric $\mathbf{d}$ into a stronger (usually non-hemimetric) distance.  First we consider $\mathcal{T}\mathbf{d}$.

\begin{prp}\label{WBprops}
If $\mathbf{d}$ is a hemimetric then $\mathcal{T}\mathbf{d}$ is a distance and, moreover,
\[\underline{\mathcal{T}\mathbf{d}}\vee\overline{\mathcal{T}\mathbf{d}}\leq\mathbf{d}\leq\mathcal{T}\mathbf{d}.\]
\end{prp}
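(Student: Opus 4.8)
The plan is to reduce the whole statement to the two composition inequalities
\[\mathcal{T}\mathbf{d}\leq\mathbf{d}\circ\mathcal{T}\mathbf{d}\qquad\text{and}\qquad\mathcal{T}\mathbf{d}\leq\mathcal{T}\mathbf{d}\circ\mathbf{d}\]
in $\mathbf{GRel}$, together with the elementary bound $\mathbf{d}\leq\mathcal{T}\mathbf{d}$.

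First I would dispose of $\mathbf{d}\leq\mathcal{T}\mathbf{d}$. Since $\mathbf{d}$ is a hemimetric, for any $y\in X$ the constant net $z_\lambda\equiv y$ is $\mathbf{d}$-Cauchy and $\mathcal{T}$-converges to $y$, so $(x\mathbf{d}(z_\lambda)-y\mathbf{d}y)_+=x\mathbf{d}y$ occurs among the terms of the supremum defining $\mathcal{T}\mathbf{d}(x,y)$; hence $x\mathcal{T}\mathbf{d}y\geq x\mathbf{d}y$. This is the only place the hemimetric hypothesis is used (it also shows the supremum is taken over a nonempty set).

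Next, for the two composition inequalities, fix $x,y,z\in X$ and let $(w_\lambda)$ be an arbitrary $\mathbf{d}$-Cauchy net with $w_\lambda\xrightarrow{\mathcal{T}}w$. For $\mathcal{T}\mathbf{d}\leq\mathbf{d}\circ\mathcal{T}\mathbf{d}$, the triangle inequality gives $x\mathbf{d}w_\lambda\leq x\mathbf{d}z+z\mathbf{d}w_\lambda$ for every $\lambda$, so $\liminf_\lambda x\mathbf{d}w_\lambda\leq x\mathbf{d}z+\liminf_\lambda z\mathbf{d}w_\lambda$; combining this with monotonicity of $(\,\cdot\,-y\mathbf{d}w)_+$, the elementary fact that $(a+r-s)_+\leq a+(r-s)_+$ for $a,r,s\in[0,\infty]$, and the definition of $\mathcal{T}\mathbf{d}(z,y)$ applied to the same net, one obtains
\[(\liminf_\lambda x\mathbf{d}w_\lambda-y\mathbf{d}w)_+\leq x\mathbf{d}z+(\liminf_\lambda z\mathbf{d}w_\lambda-y\mathbf{d}w)_+\leq x\mathbf{d}z+z\mathcal{T}\mathbf{d}y.\]
Taking the supremum over all such $(w_\lambda)$ yields $x\mathcal{T}\mathbf{d}y\leq x\mathbf{d}z+z\mathcal{T}\mathbf{d}y$, i.e.\ $\mathcal{T}\mathbf{d}\leq\mathbf{d}\circ\mathcal{T}\mathbf{d}$. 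Symmetrically, from $z\mathbf{d}w\leq z\mathbf{d}y+y\mathbf{d}w$ we get $y\mathbf{d}w\geq z\mathbf{d}w-z\mathbf{d}y$, whence
\[(\liminf_\lambda x\mathbf{d}w_\lambda-y\mathbf{d}w)_+\leq(\liminf_\lambda x\mathbf{d}w_\lambda-z\mathbf{d}w)_++z\mathbf{d}y\leq x\mathcal{T}\mathbf{d}z+z\mathbf{d}y,\]
and taking the supremum gives $\mathcal{T}\mathbf{d}\leq\mathcal{T}\mathbf{d}\circ\mathbf{d}$. (Whenever $x\mathbf{d}z$ or $z\mathbf{d}y$ equals $\infty$ the respective target inequality is trivial, so the $[0,\infty]$-valued arithmetic causes no difficulty.)

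Finally I would assemble the conclusion. As composition is monotone and $\mathbf{d}\leq\mathcal{T}\mathbf{d}$, we get $\mathcal{T}\mathbf{d}\leq\mathbf{d}\circ\mathcal{T}\mathbf{d}\leq\mathcal{T}\mathbf{d}\circ\mathcal{T}\mathbf{d}$, so $\mathcal{T}\mathbf{d}$ satisfies \eqref{triangle} and is a distance. For the hemimetric bounds, unwinding the definitions of the upper and lower hemimetrics shows that for any generalised relations $\mathbf{a},\mathbf{b}$ one has $\mathbf{a}\leq\mathbf{b}\circ\mathbf{a}\Rightarrow\overline{\mathbf{a}}\leq\mathbf{b}$ and $\mathbf{a}\leq\mathbf{a}\circ\mathbf{b}\Rightarrow\underline{\mathbf{a}}\leq\mathbf{b}$; applying this with $\mathbf{a}=\mathcal{T}\mathbf{d}$, $\mathbf{b}=\mathbf{d}$ converts the two composition inequalities into $\overline{\mathcal{T}\mathbf{d}}\leq\mathbf{d}$ and $\underline{\mathcal{T}\mathbf{d}}\leq\mathbf{d}$, giving $\underline{\mathcal{T}\mathbf{d}}\vee\overline{\mathcal{T}\mathbf{d}}\leq\mathbf{d}\leq\mathcal{T}\mathbf{d}$. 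The only step that is more than bookkeeping is pushing the triangle inequality through the $\liminf$ inside the supremum, but a fixed additive constant pulls out of a $\liminf$ and termwise inequalities survive it, so I do not anticipate a genuine obstacle.
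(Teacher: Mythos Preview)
Your proof is correct and follows essentially the same route as the paper: both arguments establish $\mathbf{d}\leq\mathcal{T}\mathbf{d}$ via the constant net, then prove the pointwise inequalities $x\mathcal{T}\mathbf{d}y\leq x\mathbf{d}z+z\mathcal{T}\mathbf{d}y$ and $x\mathcal{T}\mathbf{d}y\leq x\mathcal{T}\mathbf{d}z+z\mathbf{d}y$ by pushing the triangle inequality through the supremum, and read off $\overline{\mathcal{T}\mathbf{d}}\leq\mathbf{d}$, $\underline{\mathcal{T}\mathbf{d}}\leq\mathbf{d}$ and the distance property from these. You are simply more explicit about the $\liminf$ step and the general implication $\mathbf{a}\leq\mathbf{b}\circ\mathbf{a}\Rightarrow\overline{\mathbf{a}}\leq\mathbf{b}$, whereas the paper leaves these implicit and cites \cite[(2.3)]{Bice2019a} for the last deduction.
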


\begin{proof}
Taking $(z_\lambda)$ and $z$ to be $y$ in \eqref{Tway} shows that $\mathbf{d}\leq\mathcal{T}\mathbf{d}$.

As $\mathbf{d}$ is a distance, for all $w,x,y,z\in X$ and $(z_\lambda)\subseteq X$,
\begin{align*}
w\mathbf{d}z&\leq w\mathbf{d}y+y\mathbf{d}z\quad\text{so}\\
x\mathbf{d}z_\lambda-y\mathbf{d}z&\leq x\mathbf{d}z_\lambda-w\mathbf{d}z+w\mathbf{d}y\quad\text{and hence}\\
x\mathcal{T}\mathbf{d}y&\leq x\mathcal{T}\mathbf{d}w+w\mathbf{d}y.\\
\intertext{So $\mathcal{T}\mathbf{d}\leq\mathcal{T}\mathbf{d}\circ\mathbf{d}$, i.e. $\underline{\mathcal{T}\mathbf{d}}\leq\mathbf{d}$.  Likewise,}
x\mathbf{d}z_\lambda&\leq x\mathbf{d}w+w\mathbf{d}z_\lambda\quad\text{so}\\
x\mathbf{d}z_\lambda-y\mathbf{d}z&\leq x\mathbf{d}w+w\mathbf{d}z_\lambda-y\mathbf{d}z\quad\text{and hence}\\
x\mathcal{T}\mathbf{d}y&\leq x\mathbf{d}w+w\mathcal{T}\mathbf{d}y.
\end{align*}
So $\mathcal{T}\mathbf{d}\leq\mathbf{d}\circ\mathcal{T}\mathbf{d}$, i.e. $\overline{\mathcal{T}\mathbf{d}}\leq\mathbf{d}$.

It follows that $\underline{\mathcal{T}\mathbf{d}}\leq\mathcal{T}\mathbf{d}$ and $\overline{\mathcal{T}\mathbf{d}}\leq\mathcal{T}\mathbf{d}$, either one of which is equivalent to saying that $\mathcal{T}\mathbf{d}$ is a distance (see \cite[(2.3)]{Bice2019a}).
\end{proof}

\begin{rmk}
Even if $\mathbf{d}$ is not a hemimetric or even a distance, we can still prove that $\mathcal{T}\mathbf{d}$ is a distance as long as the topology is at least as fine as the lower topology, i.e. $\mathbf{d}_\circ\subseteq\mathcal{T}$.  Then $\mathbf{d}z\leq\mathbf{d}(z_\lambda)$ whenever $z_\lambda\xrightarrow{\mathcal{T}}z$ and hence
\begin{align*}
(x\mathbf{d}(z_\lambda)-y\mathbf{d}z)_+&\leq(x\mathbf{d}(z_\lambda)-w\mathbf{d}z)_++(w\mathbf{d}z-y\mathbf{d}z)_+\\
&\leq(x\mathbf{d}(z_\lambda)-w\mathbf{d}z)_++(w\mathbf{d}(z_\lambda)-y\mathbf{d}z)_+\quad\text{so}\\
x\mathcal{T}\mathbf{d}y&\leq x\mathcal{T}\mathbf{d}w+w\mathcal{T}\mathbf{d}y.
\end{align*}
\end{rmk}

Order theory is consistently biased towards preorders over non-reflexive transitive relations, and domain theory is no exception.  Indeed, an unbiased definition would say a domain is not a poset but rather a set together with two relations, $\leq$ and $\ll$, each definable from the other, satisfying certain completeness and continuity conditions, which can again be stated equivalently in terms of $\leq$ or $\ll$.  This duality extends to quantitative domains, as the following result shows.

Recall our standing assumption that $\mathbf{e}$ is a distance, just like $\mathbf{d}$.

\begin{thm}\label{Tdomaineqs}
The following are equivalent.
\begin{enumerate}
\item\label{eTdomain} $X$ is $\mathbf{e}^\circ_\circ$-complete, $\mathbf{d}$-$\mathbf{e}^\circ_\circ$-continuous and $\mathbf{d}={}^\circ_\circ\mathbf{e}$.
\item\label{dTdomain} $X$ is $\mathbf{d}^\bullet_\circ$-complete, $\mathbf{d}^\bullet_\circ$-continuous and $\mathbf{e}=\underline{\mathbf{d}}\geq\overline{\mathbf{d}}$.
\end{enumerate}
\end{thm}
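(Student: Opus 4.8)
The plan is to prove the two implications \eqref{dTdomain}$\Rightarrow$\eqref{eTdomain} and \eqref{eTdomain}$\Rightarrow$\eqref{dTdomain} separately, in each case splitting the target into transfer of completeness, transfer of continuity, and the reconstruction identity linking $\mathbf{d}$ and $\mathbf{e}$. Throughout I would lean on the net/subset dictionary of \cite{Bice2019a}, together with \autoref{dbhcont} (especially \eqref{dbhcont3}) and \autoref{WBprops}. The one systematic observation to keep in mind is that $\underline{\mathbf{d}}\leq\mathbf{d}$, so every $\mathbf{d}$-Cauchy net is automatically $\underline{\mathbf{d}}$-Cauchy, and that if $x_\lambda\barrowc x$ then $u\mathbf{d}x_\lambda\to u\mathbf{d}x$ for all $u$, whence the $\sup$--$\liminf$ inequality gives $\liminf_\lambda u\underline{\mathbf{d}}x_\lambda\geq u\underline{\mathbf{d}}x$ and $\liminf_\lambda x_\lambda\underline{\mathbf{d}}u\geq x\underline{\mathbf{d}}u$, i.e.\ a $\mathbf{d}^\bullet_\circ$-convergent $\mathbf{d}$-Cauchy net is $\underline{\mathbf{d}}^\circ_\circ$-convergent to the same point.

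For \eqref{dTdomain}$\Rightarrow$\eqref{eTdomain}: since $\mathbf{e}=\underline{\mathbf{d}}$, the observation above turns the $\mathbf{d}$-Cauchy approximants supplied by $\mathbf{d}^\bullet_\circ$-continuity into $\mathbf{e}^\circ_\circ$-convergent $\mathbf{d}$-Cauchy ones, which is $\mathbf{d}$-$\mathbf{e}^\circ_\circ$-continuity. For $\mathbf{e}^\circ_\circ$-completeness, given an $\mathbf{e}$-Cauchy net use \autoref{dbhcont} \eqref{dbhcont3} to replace it by a $\mathbf{d}$-Cauchy net with the same $\underline{\mathbf{d}}$- and $\mathbf{d}(\cdot)$-data, take a $\mathbf{d}^\bullet_\circ$-limit by $\mathbf{d}^\bullet_\circ$-completeness, observe (as above) that it is an $\underline{\mathbf{d}}^\circ_\circ$-limit, and transfer the limit back along the data equality via the characterisations of $\underline{\mathbf{d}}^\circ$- and $\underline{\mathbf{d}}_\circ$-convergence, using that Cauchyness collapses $\limsup$ to $\lim$.

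The reconstruction identity $\mathbf{d}={}^\circ_\circ\mathbf{e}={}^\circ_\circ\underline{\mathbf{d}}$ is the heart of the matter; note ${}^\circ_\circ\underline{\mathbf{d}}$ is already a distance by \autoref{WBprops}, so only the pointwise equality $x\mathbf{d}y=\sup\{(x\underline{\mathbf{d}}(z_\lambda)-y\underline{\mathbf{d}}z)_+:(z_\lambda)\ \underline{\mathbf{d}}\text{-Cauchy},\ z_\lambda\xrightarrow{\underline{\mathbf{d}}^\circ_\circ}z\}$ needs proof. For the lower bound I would exhibit a witness from the $\mathbf{d}^\bullet_\circ$-approximating net of $y$, refined through the interpolation $\mathcal{F}\mathbf{d}\circ\Phi^\mathbf{d}\leq\mathcal{F}\mathbf{d}$ of \autoref{dbhcont} \eqref{dbhcont2}; for the upper bound I would bound an arbitrary witness by iterating the triangle inequality, using $\overline{\mathbf{d}}\leq\underline{\mathbf{d}}$ (and $\underline{\mathbf{d}}\leq\mathbf{d}$) to absorb the $x\mathbf{d}x$-type terms that a non-hemimetric $\mathbf{d}$ leaves uncontrolled. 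I expect this step, and this absorbing manoeuvre in particular, to be the main obstacle, as it has no classical counterpart.

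For \eqref{eTdomain}$\Rightarrow$\eqref{dTdomain}: \autoref{WBprops}, applied to the hemimetric $\mathbf{e}$ with $\mathcal{T}=\mathbf{e}^\circ_\circ$, already yields that $\mathbf{d}={}^\circ_\circ\mathbf{e}$ is a distance with $\underline{\mathbf{d}}\vee\overline{\mathbf{d}}\leq\mathbf{e}\leq\mathbf{d}$; upgrading $\underline{\mathbf{d}}\leq\mathbf{e}$ to equality comes from $\mathbf{d}$-$\mathbf{e}^\circ_\circ$-continuity by evaluating $\underline{\mathbf{d}}(x,y)=\sup_u(u\mathbf{d}y-u\mathbf{d}x)_+$ against the $\mathbf{d}$-Cauchy approximant of $y$, and then $\overline{\mathbf{d}}\leq\mathbf{e}=\underline{\mathbf{d}}$ is the desired inequality. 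Finally $\mathbf{d}^\bullet_\circ$-continuity follows because, using $\mathbf{d}={}^\circ_\circ\mathbf{e}$ to compute $u\mathbf{d}x_\lambda$, the $\mathbf{e}^\circ_\circ$-convergent $\mathbf{d}$-Cauchy nets furnished by $\mathbf{d}$-$\mathbf{e}^\circ_\circ$-continuity are in fact $\mathbf{d}^\bullet_\circ$-convergent; and $\mathbf{d}^\bullet_\circ$-completeness follows because a $\mathbf{d}$-Cauchy net is $\mathbf{e}$-Cauchy, hence has an $\mathbf{e}^\circ_\circ$-limit by $\mathbf{e}^\circ_\circ$-completeness, which one then checks is a $\mathbf{d}^\bullet_\circ$-limit. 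As above, the reconstruction identity and the bookkeeping around passing between $\mathbf{d}$- and $\underline{\mathbf{d}}$-Cauchy nets and between $\mathbf{d}^\bullet_\circ$- and $\underline{\mathbf{d}}^\circ_\circ$-convergence is where the quantitative completion lemmas of \cite{Bice2019a} carry the weight.
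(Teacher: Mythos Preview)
Your overall architecture matches the paper's, and your use of \autoref{dbhcont}\eqref{dbhcont3} for the completeness transfer in \eqref{dTdomain}$\Rightarrow$\eqref{eTdomain} is exactly right. The genuine gap is the upper bound ${}^\circ_\circ\mathbf{e}\leq\mathbf{d}$ in that same direction. Your proposed ``iterate the triangle inequality, using $\overline{\mathbf{d}}\leq\underline{\mathbf{d}}$ to absorb $x\mathbf{d}x$-type terms'' does not supply the needed control: for an arbitrary $\mathbf{e}$-Cauchy $(z_\lambda)$ with $\mathbf{e}^\circ_\circ$-limit $z$ you must show $x\mathbf{e}(z_\lambda)\leq x\mathbf{d}z$, and neither $\mathbf{e}^\circ_\circ$-convergence nor $\overline{\mathbf{d}}\leq\underline{\mathbf{d}}$ gives an upper bound on $\mathbf{e}(z_\lambda)$ or $\mathbf{d}(z_\lambda)$ by itself. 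The paper's move is to reuse \autoref{dbhcont}\eqref{dbhcont3} a second time---not just for completeness transfer---to replace $(z_\lambda)$ by a $\mathbf{d}$-Cauchy $(z'_\gamma)$ with $\mathbf{d}(z_\lambda)=\mathbf{d}(z'_\gamma)$ and $(z_\lambda)\mathbf{e}=(z'_\gamma)\mathbf{e}$, and then to invoke $\mathbf{d}^\bullet_\circ$-\emph{completeness} together with \cite[(8.14)]{Bice2019a} to force $z'_\gamma\barrowc z$; this yields $\mathbf{e}(z_\lambda)\leq\mathbf{d}(z_\lambda)=\mathbf{d}(z'_\gamma)=\mathbf{d}z$, after which $(x\mathbf{e}(z_\lambda)-y\mathbf{e}z)_+\leq(x\mathbf{d}z-y\mathbf{e}z)_+\leq x\mathbf{d}y$ follows from $\mathbf{d}\leq\mathbf{d}\circ\mathbf{e}$. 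So completeness is essential to the reconstruction identity, not merely to completeness transfer; your sketch does not yet reflect this, and you correctly flag this step as the obstacle, but the mechanism you propose is not the one that works.

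A smaller slip in \eqref{eTdomain}$\Rightarrow$\eqref{dTdomain}: to upgrade $\underline{\mathbf{d}}\leq\mathbf{e}$ to equality the paper approximates $x$, not $y$, by a $\mathbf{d}$-Cauchy net with $\mathbf{e}^\circ_\circ$-limit $x$ and uses \cite[(7.4)]{Bice2019a} to convert $(x_\lambda)\mathbf{d}$ into $(x_\lambda)\underline{\mathbf{d}}$, obtaining $x\mathbf{e}y\leq(x_\lambda)\mathbf{e}y\leq(x_\lambda)\mathbf{d}y=(x_\lambda)\underline{\mathbf{d}}y\leq x\underline{\mathbf{d}}y$. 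Evaluating the supremum formula for $\underline{\mathbf{d}}(x,y)$ against an approximant of $y$, as you suggest, does not obviously produce the reverse inequality $\mathbf{e}\leq\underline{\mathbf{d}}$.
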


\begin{proof}\
\begin{itemize}
\item[\eqref{eTdomain}$\Rightarrow$\eqref{dTdomain}]  As $X$ is $\mathbf{d}$-$\mathbf{e}^\circ_\circ$-continuous, $\mathbf{e}$ is must be a hemimetric, by \cite[(8.16)]{Bice2019a}.  By \autoref{WBprops}, $\mathbf{d}={}^\circ_\circ\mathbf{e}$ is a distance with $\underline{\mathbf{d}}\vee\overline{\mathbf{d}}\leq\mathbf{e}\leq\mathbf{d}$.  As $X$ is $\mathbf{d}$-$\mathbf{e}^\circ_\circ$-continuous, if $x\in X$ we have $\mathbf{d}$-Cauchy $(x_\lambda)$ with $x=\mathbf{e}^\circ_\circ$-$\lim x_\lambda$.  By \cite[(7.4)]{Bice2019a}, $(x_\lambda)\mathbf{d}=(x_\lambda)\underline{\mathbf{d}}$ and, by \cite[(8.15)]{Bice2019a}, $x_\lambda\underline{\mathbf{d}}x\leq x_\lambda\mathbf{e}x\rightarrow0$ so
\[x\mathbf{e}y\leq(x_\lambda)\mathbf{e}y\leq(x_\lambda)\mathbf{d}y=(x_\lambda)\underline{\mathbf{d}}y\leq(x_\lambda)\underline{\mathbf{d}}x+x\underline{\mathbf{d}}y=x\underline{\mathbf{d}}y\leq x\mathbf{e}y,\]
i.e. $\mathbf{e}=\underline{\mathbf{d}}\geq\overline{\mathbf{d}}$.  Next we claim any $\mathbf{e}^\circ_\circ$-limit $x$ of $\mathbf{d}$-Cauchy $(x_\lambda)$ is a $\mathbf{d}^\bullet_\circ$-limit.  As above, $(x_\lambda)\mathbf{d}x=(x_\lambda)\mathbf{e}x=0$, so $x$ is a $\mathbf{d}_\circ$-limit, by \cite[(8.15)]{Bice2019a}.  By \cite[(7.4)]{Bice2019a}, $\mathbf{d}(x_\lambda)=\overline{\mathbf{d}}(x_\lambda)$ so
\[y\mathbf{d}(x_\lambda)=y\overline{\mathbf{d}}(x_\lambda)\leq y\mathbf{e}(x_\lambda)=(y\mathbf{e}(x_\lambda)-x\mathbf{e}x)_+\leq y\mathbf{d}x,\]
as $\mathbf{d}={}^\circ_\circ\mathbf{e}$, i.e. $\mathbf{d}(x_\lambda)\leq\mathbf{d}x$ so $x$ is also a $\mathbf{d}^\bullet$-limit.  Thus the claim is proved and hence $X$ is also $\mathbf{d}^\bullet_\circ$-continuous and $\mathbf{d}^\bullet_\circ$-complete.

\item[\eqref{dTdomain}$\Rightarrow$\eqref{eTdomain}]  Again $\mathbf{e}=\underline{\mathbf{d}}$ is a hemimetric.  As $X$ is $\mathbf{d}^\bullet_\circ$-continuous, for any $y\in X$, we have $\mathbf{d}$-Cauchy $y_\lambda\barrowc y$.  By \cite[(8.13)]{Bice2019a}, $(y_\lambda)\mathbf{e}=y\mathbf{e}$.  By \cite[(8.11) and (8.16)]{Bice2019a}, $y=\mathbf{e}^\circ_\circ$-$\lim y_\lambda$ so $X$ is $\mathbf{d}$-$\mathbf{e}^\circ_\circ$-continuous.  By \cite[(7.4)]{Bice2019a} again,
\[x{}^\circ_\circ\mathbf{e}y\geq(x\mathbf{e}(y_\lambda)-y\mathbf{e}y)_+\geq x\overline{\mathbf{d}}(y_\lambda)=x\mathbf{d}(y_\lambda)=x\mathbf{d}y,\]
i.e. ${}^\circ_\circ\mathbf{e}\geq\mathbf{d}$.  Now take $\mathbf{e}$-Cauchy $(z_\lambda)$.  By \autoref{dbhcont} \eqref{dbhcont3}, we have $\mathbf{d}$-Cauchy $(z'_\gamma)\subseteq X$ with $(z_\lambda)\mathbf{e}=(z'_\gamma)\mathbf{e}$ and $\mathbf{d}(z_\lambda)=\mathbf{d}(z'_\gamma)$.  As $X$ is $\mathbf{d}^\bullet_\circ$-complete, $(z'_\gamma)$ has a $\mathbf{d}^\bullet_\circ$-limit $z$.  By \cite[(8.11) and (8.16)]{Bice2019a} again, $z$ is also an $\mathbf{e}^\circ_\circ$-limit of $(z'_\gamma)$.  Thus $z\mathbf{e}=(z'_\gamma)\mathbf{e}=(z_\lambda)\mathbf{e}$, so $z$ is also an $\mathbf{e}^\circ_\circ$-limit of $(z_\lambda)$, i.e. $X$ is $\mathbf{e}^\circ_\circ$-complete.  On the other hand, if we are already given $z=\mathbf{e}^\circ_\circ$-$\lim z_\lambda$ and hence $z=\mathbf{e}^\circ_\circ$-$\lim z'_\gamma$ then $z'_\gamma\barrowc z$, by \cite[(8.14)]{Bice2019a} and $\mathbf{d}^\bullet_\circ$-completeness again.  Thus $\mathbf{e}(z_\lambda)\leq\mathbf{d}(z_\lambda)=\mathbf{d}(z'_\gamma)=\mathbf{d}z$ so, as $\mathbf{d}\leq\mathbf{d}\circ\mathbf{e}$,
\[(x\mathbf{e}(z_\lambda)-y\mathbf{e}z)_+\leq(x\mathbf{d}z-y\mathbf{e}z)_+\leq x\mathbf{d}y.\]
As $(z_\lambda)$ was arbitrary, $\mathbf{e}^\circ_\circ\leq\mathbf{d}$ and hence $\mathbf{d}=\mathbf{e}^\circ_\circ$.\qedhere
\end{itemize}
\end{proof}

We also have the following analogous results for the relational rather than topological notions, whose proofs are also very similar.

\begin{prp}\label{Rdprops}
If $\mathbf{d}$ is a hemimetric and $x\mathcal{R}\{x\}$, for all $x\in X$, then
\[\underline{\mathcal{R}\mathbf{d}}\vee\overline{\mathcal{R}\mathbf{d}}\leq\mathbf{d}\leq\mathcal{R}\mathbf{d}.\]
In particular, $\mathcal{R}\mathbf{d}$ is a a distance.
\end{prp}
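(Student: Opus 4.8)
The plan is to follow the proof of \autoref{WBprops} essentially verbatim, with the $\mathbf{d}$-Cauchy nets $(z_\lambda)$ converging in $\mathcal{T}$ replaced throughout by $\mathbf{d}$-directed subsets $Z$ with $z\mathcal{R}Z$, and $x\mathbf{d}(z_\lambda)$ replaced by $x\mathbf{d}Z=\inf_{w\in Z}x\mathbf{d}w$. For the lower bound $\mathbf{d}\leq\mathcal{R}\mathbf{d}$, given $x,y\in X$ I would test the supremum in \eqref{Rway} against the single witness $Z=\{y\}$, $z=y$: since $\mathbf{d}$ is a hemimetric, $y\mathbf{d}y=0$, so $\{y\}$ is $\mathbf{d}$-directed (this is the only place reflexivity of $\leq^\mathbf{d}$ is used), and $y\mathcal{R}\{y\}$ holds by hypothesis, whence $(x\mathbf{d}\{y\}-y\mathbf{d}y)_+=(x\mathbf{d}y)_+=x\mathbf{d}y\leq x\mathcal{R}\mathbf{d}y$.

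For the two upper bounds I would prove $\mathcal{R}\mathbf{d}\leq\mathbf{d}\circ\mathcal{R}\mathbf{d}$ and $\mathcal{R}\mathbf{d}\leq\mathcal{R}\mathbf{d}\circ\mathbf{d}$, which by \cite[(2.3)]{Bice2019a} are exactly $\overline{\mathcal{R}\mathbf{d}}\leq\mathbf{d}$ and $\underline{\mathcal{R}\mathbf{d}}\leq\mathbf{d}$. Fix $w,x,y\in X$ and a witness $(Z,z)$, so $Z$ is $\mathbf{d}$-directed and $z\mathcal{R}Z$. From $x\mathbf{d}u\leq x\mathbf{d}w+w\mathbf{d}u$ for every $u\in Z$, taking the infimum over $u\in Z$ gives $x\mathbf{d}Z\leq x\mathbf{d}w+w\mathbf{d}Z$, so $(x\mathbf{d}Z-y\mathbf{d}z)_+\leq x\mathbf{d}w+(w\mathbf{d}Z-y\mathbf{d}z)_+\leq x\mathbf{d}w+w\mathcal{R}\mathbf{d}y$; taking the supremum over all witnesses yields $\mathcal{R}\mathbf{d}\leq\mathbf{d}\circ\mathcal{R}\mathbf{d}$. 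Symmetrically, from $w\mathbf{d}z\leq w\mathbf{d}y+y\mathbf{d}z$ I get $(x\mathbf{d}Z-y\mathbf{d}z)_+\leq(x\mathbf{d}Z-w\mathbf{d}z)_++w\mathbf{d}y$, and since the same pair $(Z,z)$ is also an admissible witness for $x\mathcal{R}\mathbf{d}w$, the right-hand side is $\leq x\mathcal{R}\mathbf{d}w+w\mathbf{d}y$; taking suprema gives $\mathcal{R}\mathbf{d}\leq\mathcal{R}\mathbf{d}\circ\mathbf{d}$.

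Combining $\underline{\mathcal{R}\mathbf{d}}\leq\mathbf{d}$ and $\overline{\mathcal{R}\mathbf{d}}\leq\mathbf{d}$ with $\mathbf{d}\leq\mathcal{R}\mathbf{d}$ gives $\underline{\mathcal{R}\mathbf{d}}\vee\overline{\mathcal{R}\mathbf{d}}\leq\mathbf{d}\leq\mathcal{R}\mathbf{d}$, and since $\underline{\mathcal{R}\mathbf{d}}\leq\mathcal{R}\mathbf{d}$ (equivalently $\overline{\mathcal{R}\mathbf{d}}\leq\mathcal{R}\mathbf{d}$) is by \cite[(2.3)]{Bice2019a} equivalent to $\mathcal{R}\mathbf{d}$ being a distance, the ``in particular'' clause follows. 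I do not expect any real obstacle here: the estimates are the same bookkeeping with $(\cdot)_+$ as in \autoref{WBprops}, and the only genuinely new points are the verification that $\{y\}$ is $\mathbf{d}$-directed and the observation that the hypothesis $x\mathcal{R}\{x\}$ is precisely what makes the constant witness $Z=\{y\}$ admissible — without it the supremum defining $\mathcal{R}\mathbf{d}$ could be empty and the lower bound $\mathbf{d}\leq\mathcal{R}\mathbf{d}$ would fail.
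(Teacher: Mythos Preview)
Your proof is correct and follows essentially the same approach as the paper's own proof: both test the definition of $\mathcal{R}\mathbf{d}$ against the singleton witness $Z=\{y\}$, $z=y$ to get $\mathbf{d}\leq\mathcal{R}\mathbf{d}$, and both derive $\mathcal{R}\mathbf{d}\leq\mathbf{d}\circ\mathcal{R}\mathbf{d}$ and $\mathcal{R}\mathbf{d}\leq\mathcal{R}\mathbf{d}\circ\mathbf{d}$ from the triangle inequality for $\mathbf{d}$, then invoke \cite[(2.3)]{Bice2019a} for the distance conclusion. Your explicit remark that reflexivity of $\leq^\mathbf{d}$ is needed only to make $\{y\}$ $\mathbf{d}$-directed, and that the hypothesis $x\mathcal{R}\{x\}$ is exactly what makes this witness admissible, is a nice clarification the paper leaves implicit.
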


\begin{proof}
Taking $Z=\{y\}$ and $z=y$ in \eqref{Rway} yields $x\mathbf{d}y\leq x\mathcal{R}\mathbf{d}y$.

As $\mathbf{d}$ is a distance,
\begin{align*}
w\mathbf{d}z&\leq w\mathbf{d}y+y\mathbf{d}z\quad\text{so}\\
x\mathbf{d}Z-y\mathbf{d}z&\leq x\mathbf{d}Z-w\mathbf{d}z+w\mathbf{d}y\quad\text{and hence}\\
x\mathcal{R}\mathbf{d}y&\leq x\mathcal{R}\mathbf{d}w+w\mathbf{d}y.\\
\intertext{So $\mathcal{R}\mathbf{d}\leq\mathcal{R}\mathbf{d}\circ\mathbf{d}$, i.e. $\underline{\mathcal{R}\mathbf{d}}\leq\mathbf{d}$.  Likewise,}
x\mathbf{d}Z&\leq x\mathbf{d}w+w\mathbf{d}Z\quad\text{so}\\
x\mathbf{d}Z-y\mathbf{d}z&\leq x\mathbf{d}w+w\mathbf{d}Z-y\mathbf{d}z\quad\text{and hence}\\
x\mathcal{R}\mathbf{d}y&\leq x\mathbf{d}w+w\mathcal{R}\mathbf{d}y.
\end{align*}
So $\mathcal{R}\mathbf{d}\leq\mathbf{d}\circ\mathcal{R}\mathbf{d}$, i.e. $\overline{\mathcal{R}\mathbf{d}}\leq\mathbf{d}$.
\end{proof}

\begin{rmk}
As before, even if $\mathbf{d}$ is not a hemimetric or even a distance, we can still prove that $\mathcal{R}\mathbf{d}$ is a distance as long as $\mathbf{d}z\leq\mathbf{d}Z$ whenever $z\mathcal{R}Z$ because then
\begin{align*}
(x\mathbf{d}Z-y\mathbf{d}z)_+&\leq(x\mathbf{d}Z-w\mathbf{d}z)_++(w\mathbf{d}z-y\mathbf{d}z)_+\\
&\leq(x\mathbf{d}Z-w\mathbf{d}z)_++(w\mathbf{d}Z-y\mathbf{d}z)_+\quad\text{so}\\
x\mathcal{R}\mathbf{d}y&\leq x\mathcal{R}\mathbf{d}w+w\mathcal{R}\mathbf{d}y.\qedhere
\end{align*}
\end{rmk}

\begin{thm}\label{Rdomaineqs}
The following are equivalent.
\begin{enumerate}
\item\label{eRdomain} $X$ is $\mathbf{e}$-$\sup$-complete, $\mathbf{d}$-$\mathbf{e}$-$\sup$-continuous and $\mathbf{d}=\sup\mathbf{e}$.
\item\label{dRdomain} $X$ is $\mathbf{d}$-$\max$-complete, $\mathbf{d}$-$\max$-continuous and $\mathbf{e}=\underline{\mathbf{d}}\geq\overline{\mathbf{d}}$.
\end{enumerate}
\end{thm}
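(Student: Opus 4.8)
The plan is to run the proof of \autoref{Tdomaineqs} essentially verbatim, under the dictionary: $\mathbf{d}$-Cauchy nets become $\mathbf{d}$-directed subsets, $\mathbf{e}^\circ_\circ$-limits become $\mathbf{e}$-suprema, and $\mathbf{d}^\bullet_\circ$-limits become $\mathbf{d}$-maxima; \autoref{Rdprops} plays the role of \autoref{WBprops}, \autoref{maxctsequiv}\,\eqref{dbardirected} the role of \autoref{dbhcont}\,\eqref{dbhcont3}, and the relational (directed-subset) analogs in \cite{Bice2019a} of the facts (7.4) and (8.11)--(8.16) used there take over from those facts. Throughout one uses that $\mathbf{e}\leq\mathbf{d}$ (so every $\mathbf{d}$-directed subset is $\mathbf{e}$-directed, hence a legitimate witness in \eqref{Rway}), and that a $\mathbf{d}$-maximum $x$ of $Y$ in fact satisfies $\mathbf{d}Y=\mathbf{d}x$, not merely $\mathbf{d}Y\leq\mathbf{d}x$, by \eqref{triangle}.

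For \eqref{eRdomain}$\Rightarrow$\eqref{dRdomain}, I would first note, as in \autoref{Continuity}, that $\mathbf{d}$-$\mathbf{e}$-$\sup$-continuity forces $\mathbf{e}$ to be a hemimetric (since $x=\mathbf{e}$-$\sup Y$ gives $x\mathbf{e}x\leq Y\mathbf{e}x=0$), so \autoref{Rdprops} applies with $\mathcal{R}=\mathbf{e}$-$\sup$ and gives that $\mathbf{d}=\sup\mathbf{e}$ is a distance with $\underline{\mathbf{d}}\vee\overline{\mathbf{d}}\leq\mathbf{e}\leq\mathbf{d}$. For each $x$ pick $\mathbf{d}$-directed $Y$ with $x=\mathbf{e}$-$\sup Y$; the same chain of (in)equalities as in \autoref{Tdomaineqs}, now using $Y\mathbf{d}=Y\underline{\mathbf{d}}$ and $Y\leq^\mathbf{e}x$, collapses to $\mathbf{e}=\underline{\mathbf{d}}\ (\geq\overline{\mathbf{d}})$. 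The key claim is that every $\mathbf{e}$-supremum $x$ of a $\mathbf{d}$-directed $Y$ is a $\mathbf{d}$-maximum of $Y$: $Y\mathbf{d}x=Y\underline{\mathbf{d}}x\leq Y\mathbf{e}x=0$ gives $Y\leq^\mathbf{d}x$, and $y\mathbf{d}Y=y\overline{\mathbf{d}}Y\leq y\mathbf{e}Y=(y\mathbf{e}Y-x\mathbf{e}x)_+\leq y(\sup\mathbf{e})x=y\mathbf{d}x$ comes straight from \eqref{Rway}. Applying $\mathbf{e}$-$\sup$-completeness to the ($\mathbf{e}$-directed) $\mathbf{d}$-directed subsets and then the claim turns their $\mathbf{e}$-suprema into $\mathbf{d}$-maxima, so $X$ is $\mathbf{d}$-$\max$-complete; similarly $\mathbf{d}$-$\mathbf{e}$-$\sup$-continuity becomes $\mathbf{d}$-$\max$-continuity.

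For \eqref{dRdomain}$\Rightarrow$\eqref{eRdomain}, again $\mathbf{e}=\underline{\mathbf{d}}$ is a hemimetric. $\mathbf{d}$-$\max$-continuity hands each $y$ a $\mathbf{d}$-directed $Y$ with $y=\mathbf{d}$-$\max Y$, and the relational analog of \cite[(8.13)]{Bice2019a} gives $Y\mathbf{e}=y\mathbf{e}$; since $Y\leq^\mathbf{d}y$ forces $Y\leq^\mathbf{e}y$, we get $y=\mathbf{e}$-$\sup Y$, so $X$ is $\mathbf{d}$-$\mathbf{e}$-$\sup$-continuous, and feeding this $Y$ into \eqref{Rway} (using $\mathbf{e}\geq\overline{\mathbf{d}}$, $\overline{\mathbf{d}}Y=\mathbf{d}Y$ and $\mathbf{d}Y=\mathbf{d}y$) gives $\sup\mathbf{e}\geq\mathbf{d}$. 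For completeness, given $\mathbf{e}$-directed $Y$, \autoref{maxctsequiv}\,\eqref{dbardirected} supplies $\mathbf{d}$-directed $Z$ with $Z\mathbf{e}=Y\mathbf{e}$ and $\mathbf{d}Z=\mathbf{d}Y$; $\mathbf{d}$-$\max$-completeness gives $Z$ a $\mathbf{d}$-maximum $z$, which by the relational analogs of \cite[(8.11), (8.16)]{Bice2019a} is an $\mathbf{e}$-supremum of $Z$, hence (as $Z\mathbf{e}=Y\mathbf{e}$) of $Y$, so $X$ is $\mathbf{e}$-$\sup$-complete. Conversely, starting from a given $z=\mathbf{e}$-$\sup Y$ and its surrogate $Z$, the relational analog of \cite[(8.14)]{Bice2019a} together with $\mathbf{d}$-$\max$-completeness makes $z=\mathbf{d}$-$\max Z$; then $x\mathbf{e}Y\leq x\mathbf{d}Y=x\mathbf{d}Z=x\mathbf{d}z$ and, since $\mathbf{d}\leq\mathbf{d}\circ\mathbf{e}$, $(x\mathbf{e}Y-y\mathbf{e}z)_+\leq(x\mathbf{d}z-y\mathbf{e}z)_+\leq x\mathbf{d}y$, so $\sup\mathbf{e}\leq\mathbf{d}$, whence $\mathbf{d}=\sup\mathbf{e}$.

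The hard part, exactly as in \autoref{Tdomaineqs}, is the completeness half of \eqref{dRdomain}$\Rightarrow$\eqref{eRdomain}: one has to replace an arbitrary $\mathbf{e}$-directed $Y$ by a $\mathbf{d}$-directed surrogate $Z$ controlling $Y\mathbf{e}$ (to carry the supremum) and $\mathbf{d}Y$ (for the $\sup\mathbf{e}\leq\mathbf{d}$ estimate) simultaneously, and then transfer the $\mathbf{d}$-maximum of $Z$ back to $Y$ itself --- which is precisely what \autoref{maxctsequiv}\,\eqref{dbardirected} is for. Apart from this, the only thing to check is that each fact of \cite{Bice2019a} quoted above really does have the relational form used here; but these are its systematically developed order-theoretic counterparts, so no new idea is needed.
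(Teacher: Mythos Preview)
Your proposal is correct and follows essentially the same route as the paper's proof: the same hemimetric reduction via \autoref{Rdprops}, the same chain showing $\mathbf{e}=\underline{\mathbf{d}}$, the same key claim that $\mathbf{e}$-suprema of $\mathbf{d}$-directed sets are $\mathbf{d}$-maxima, and the same use of \autoref{maxctsequiv}\,\eqref{dbardirected} to manufacture the $\mathbf{d}$-directed surrogate in the completeness direction. The paper cites the specific relational facts \cite[(9.2), (10.3), (10.4), (10.6)]{Bice2019a} where you invoke ``relational analogs of (7.4), (8.11)--(8.16)'', but the content is identical.
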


\begin{proof}\
\begin{itemize}
\item[\eqref{eRdomain}$\Rightarrow$\eqref{dRdomain}]  As $X$ is $\mathbf{d}$-$\mathbf{e}$-$\sup$-continuous, $\mathbf{e}$ is a hemimetric, by \cite[(10.3)]{Bice2019a}.  By \autoref{Rdprops}, $\mathbf{d}=\sup\mathbf{e}$ is a distance with $\underline{\mathbf{d}}\vee\overline{\mathbf{d}}\leq\mathbf{e}\leq\mathbf{d}$.  As $X$ is $\mathbf{d}$-$\mathbf{e}$-$\sup$-continuous, if $x\in X$ we have $\mathbf{d}$-directed $Y$ with $x=\mathbf{e}$-$\sup Y$.  Thus $Y\underline{\mathbf{d}}x\leq Y\mathbf{e}x=0$ and, by \cite[(9.2)]{Bice2019a}, $Y\mathbf{d}=Y\underline{\mathbf{d}}$ so
\[x\mathbf{e}y\leq Y\mathbf{e}y\leq Y\mathbf{d}y=Y\underline{\mathbf{d}}y\leq Y\underline{\mathbf{d}}x+x\underline{\mathbf{d}}y=x\underline{\mathbf{d}}y\leq x\mathbf{e}y,\]
i.e. $\mathbf{e}=\underline{\mathbf{d}}\geq\overline{\mathbf{d}}$.  Next we claim that any $\mathbf{e}$-supremum $x$ of $\mathbf{d}$-directed $Y$ is a $\mathbf{d}$-maximum.  By \cite[(9.2)]{Bice2019a}, $Y\mathbf{d}x=Y\mathbf{e}x=0$, i.e. $Y\leq^\mathbf{d}x$.  Again by \cite[(9.2)]{Bice2019a}, and the fact $\mathbf{d}=\sup\mathbf{e}$,
\[y\mathbf{d}Y=y\overline{\mathbf{d}}Y\leq y\mathbf{e}Y=(y\mathbf{e}Y-x\mathbf{e}x)_+\leq y\mathbf{d}x,\]
i.e. $\mathbf{d}Y\leq\mathbf{d}x$ so $x$ is also a $\mathbf{d}$-maximum.  Thus the claim is proved and hence $X$ is also $\mathbf{d}$-$\max$-continuous and $\mathbf{d}$-$\max$-complete.

\item[\eqref{dRdomain}$\Rightarrow$\eqref{eRdomain}]  Again $\mathbf{e}=\underline{\mathbf{d}}$ is a hemimetric.  As $X$ is $\mathbf{d}$-$\max$-continuous, for any $y\in X$, we have $y=\mathbf{d}$-$\max Y$, for some $\mathbf{d}$-directed $Y$.  By \cite[(10.4)]{Bice2019a}, $y=\mathbf{e}$-$\sup Y$ so $X$ is $\mathbf{d}$-$\mathbf{e}$-$\sup$-continuous.  By \cite[(9.2)]{Bice2019a} again, $\overline{\mathbf{d}}Y=\mathbf{d}Y$ so
\[x(\sup\mathbf{e})y\geq(x\mathbf{e}Y-y\mathbf{e}y)_+\geq x\overline{\mathbf{d}}Y=x\mathbf{d}Y=x\mathbf{d}y,\]
i.e. $\sup\mathbf{e}\geq\mathbf{d}$.  Now take $\mathbf{e}$-directed $Z$.  By \autoref{maxctsequiv} \eqref{dbardirected}, we have $\mathbf{d}$-directed $Z'\subseteq X$ with $Z\mathbf{e}=Z'\mathbf{e}$ and $\mathbf{d}Z=\mathbf{d}Z'$.  As $X$ is $\mathbf{d}$-$\max$-complete, $Z'$ has a $\mathbf{d}$-maximum $z$.  By \cite[(10.4)]{Bice2019a}, $z$ is also an $\mathbf{e}$-supremum of $Z'$.  Thus $z\mathbf{e}=Z'\mathbf{e}=Z\mathbf{e}$, so $z$ is also an $\mathbf{e}$-supremum of $Z$, i.e. $X$ is $\mathbf{e}$-$\sup$-complete.  On the other hand, if we are already given $z=\mathbf{e}$-$\sup Z$ then $z=\mathbf{e}$-$\sup Z'$ so $z=\mathbf{d}$-$\max Z'$, by \cite[(10.6)]{Bice2019a} and $\mathbf{d}$-$\max$-completeness again.  Thus $\mathbf{e}Z\leq\mathbf{d}Z=\mathbf{d}Z'=\mathbf{d}z$ so, as $\mathbf{d}\leq\mathbf{d}\circ\mathbf{e}$,
\[(x\mathbf{e}Z-y\mathbf{e}z)_+\leq(x\mathbf{d}z-y\mathbf{e}z)_+\leq x\mathbf{d}y.\]
As $Z$ was arbitrary, $\sup\mathbf{e}\leq\mathbf{d}$ and hence $\mathbf{d}=\sup\mathbf{e}$.\qedhere
\end{itemize}
\end{proof}

We base our definition of domains on \autoref{Tdomaineqs} \eqref{dTdomain} and \autoref{Rdomaineqs} \eqref{dRdomain}.  This is dual to the usual focus on hemimetrics and preorders.

\begin{dfn}\label{domdefs}
For any topology $\mathcal{S}$ on $X$ or relation $\mathcal{S}\subseteq X\times\mathcal{P}(X)$, define
\begin{align*}
&X\text{ is a $\mathbf{d}$-$\mathcal{S}$-\emph{predomain}}&&\Leftrightarrow&&X\text{ is $\mathbf{d}$-$\mathcal{S}$-continuous and }\overline{\mathbf{d}}\leq\underline{\mathbf{d}}.\\
&X\text{ is a $\mathbf{d}$-$\mathcal{S}$\emph{-domain}}&&\Leftrightarrow&&X\text{ is a $\mathbf{d}$-$\mathcal{S}$-complete $\mathbf{d}$-$\mathcal{S}$-predomain}.
\end{align*}
\end{dfn}

For a poset $(\mathbb{P},\leq)$, with way-below relation $\ll\ ={}^\circ_\circ\!\!\!\leq\ =\sup\!\!\leq$, \autoref{domdefs} generalizes the notion of domain from \cite[Definition I-1.6]{GierzHofmannKeimelLawsonMisloveScott2003}.  Specifically
\begin{align*}
\text{$(\mathbb{P},\leq)$ is a domain}\qquad&\Leftrightarrow\qquad\mathbb{P}\text{ is a $\ll_\circ^\bullet$-domain with}\leq\ =\underline{\ll}\\
&\Leftrightarrow\qquad\mathbb{P}\text{ is a $\ll$-$\max$-domain with}\leq\ =\underline{\ll}.\\
\intertext{\autoref{domdefs} also generalizes `stratified predomain' from \cite[\S2.3]{Keimel2016}, i.e.}
\text{$(\mathbb{P},\prec\!\!\prec)$ is a stratified predomain}\qquad&\Leftrightarrow\qquad\mathbb{P}\text{ is a $\prec\!\!\prec_\circ^\bullet$-predomain}\\
&\Leftrightarrow\qquad\mathbb{P}\text{ is a $\prec\!\!\prec$-$\max$-predomain}.
\end{align*}
(on its own `predomain' in \cite[\S2.1]{Keimel2016} is synonymous with `abstract basis' and hence with $\prec\!\!\prec^\bullet_\circ$-continuity or $\prec\!\!\prec$-$\max$-continuity in our terminology).  While domains usually refer to posets rather than prosets, we are not requiring $\leq^{\underline{\mathbf{d}}}$ to be antisymmetric in \autoref{domdefs}.  Although we can always make $\leq^{\underline{\mathbf{d}}}$ antisymmetric, if so desired, by identifying $\mathbf{d}$-equivalent points (i.e. pairs $x,y\in X$ with $x\mathbf{d}=y\mathbf{d}$ and $\mathbf{d}x=\mathbf{d}y$), as $\overline{\mathbf{d}}\leq\underline{\mathbf{d}}$ implies that $x$ and $y$ are $\mathbf{d}$-equivalent iff $x\underline{\mathbf{d}}^\vee\!y=0$.

Under interpolation conditions like in \autoref{ctscor}, we can show that $\mathbf{d}^\bullet_\circ$-domains are just $\mathbf{d}$-$\max$-domains that are also complete in the usual sense with respect to the pseudometric $\underline{\mathbf{d}}^\vee$ (as in \cite{Bice2019a}, we denote the symmetrisation of $\mathbf{d}$ by $\mathbf{d}^\vee=\mathbf{d}\vee\mathbf{d}^\mathrm{op}$, noting completeness w.r.t. $\mathbf{d}^\vee_\circ=(\mathbf{d}^\vee)_\circ$ can be characterised in the usual way familiar from (pseudo)metric space theory, i.e. for every $\mathbf{d}^\vee$-Cauchy net $(x_\lambda)$, we have $x\in X$ with $x\mathbf{d}^\vee x_\lambda\rightarrow0$ \textendash\, see \cite[(8.15)]{Bice2019a}).

\begin{cor}\label{domcor}
\begin{gather*}
\text{If}\qquad\underline{\mathbf{d}}\circ\mathbin{\leq^{\mathbf{d}\mathcal{P}}}\precapprox\mathbf{d}\mathcal{P},\qquad\underline{\mathbf{d}}^\vee\circ\Phi^{\underline{\mathbf{d}}}\precapprox\mathbf{d}\qquad\text{or}\qquad\mathbf{d}\circ\mathbin{\leq^\mathbf{d}}\precapprox\mathbf{d}\text{ and }\leq^{\mathcal{F}\mathbf{d}}\circ\,\overline{\mathbf{d}}\leq\mathcal{F}\mathbf{d}\\
\text{then}\qquad\qquad X\text{ is a $\mathbf{d}^\bullet_\circ$-domain}\qquad\Leftrightarrow\qquad X\text{ is a $\underline{\mathbf{d}}^\vee_\circ$-complete $\mathbf{d}$-$\max$-domain}.
\end{gather*}
\end{cor}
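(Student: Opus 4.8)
The plan is to split the claimed equivalence into a continuity half, which is essentially bookkeeping with results already proved, and a completeness half, which carries the real content. First I would observe that the predomain condition $\overline{\mathbf{d}}\leq\underline{\mathbf{d}}$ appears on both sides, so it may be assumed throughout: if it fails, $X$ is neither a $\mathbf{d}^\bullet_\circ$-predomain nor a $\mathbf{d}$-$\max$-predomain, and both sides of the equivalence are false. Granting $\overline{\mathbf{d}}\leq\underline{\mathbf{d}}$, I would check that each of the three displayed interpolation conditions implies the hypotheses of one case of \autoref{ctscor}: the first is literally \eqref{ctscor1}; the second yields \eqref{ctscor3} with $\mathbf{e}=\underline{\mathbf{d}}^\vee$, using $\overline{\mathbf{d}}\leq\underline{\mathbf{d}}$ to pass from $\Phi^{\underline{\mathbf{d}}}$ to $\Phi^{\overline{\mathbf{d}}}$ and from $\underline{\mathbf{d}}\vee\overline{\mathbf{d}}^\mathrm{op}$ to $\underline{\mathbf{d}}^\vee$, together with the assumed $\underline{\mathbf{d}}^\vee_\circ$-completeness playing the role of ``$X$ is $\mathbf{e}_\circ$-complete''; and the first half of the third is \eqref{ctscor2}. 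So in every case $\mathbf{d}^\bullet_\circ$-continuity of $X$ gives $\mathbf{d}$-$\max$-continuity of $X$, while the converse is unconditional by \autoref{maxctsequiv} \eqref{maxlimcts}. This reduces the corollary to: assuming $\overline{\mathbf{d}}\leq\underline{\mathbf{d}}$, continuity (in either equivalent sense), and one of the interpolation conditions, $X$ is $\mathbf{d}^\bullet_\circ$-complete iff it is both $\mathbf{d}$-$\max$-complete and $\underline{\mathbf{d}}^\vee_\circ$-complete.

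For the forward direction I would argue as follows. Given $\mathbf{d}$-directed $Y$, build from it (exactly as in the proof of \autoref{dbhcont}) a $\mathbf{d}$-pre-Cauchy net lying both above and below $Y$, pass to a $\mathbf{d}$-Cauchy subnet via \cite[Theorem 7.3]{Bice2019a}, and take a $\mathbf{d}^\bullet_\circ$-limit $x$ of it; the recalled characterisations of Smyth convergence and of $\mathbf{d}$-maxima then show $x=\mathbf{d}$-$\max Y$, so $X$ is $\mathbf{d}$-$\max$-complete. For $\underline{\mathbf{d}}^\vee_\circ$-completeness, take a $\underline{\mathbf{d}}^\vee$-Cauchy net $(x_\lambda)$, which is in particular $\underline{\mathbf{d}}$-Cauchy, feed it through \autoref{dbhcont} \eqref{dbhcont3} to get a $\mathbf{d}$-Cauchy net $(y_\gamma)$ with the same $\underline{\mathbf{d}}$- and $\mathbf{d}$-data, take its $\mathbf{d}^\bullet_\circ$-limit $z$, and verify $z\underline{\mathbf{d}}^\vee x_\lambda\to0$: the inequality $x_\lambda\underline{\mathbf{d}}z\to0$ drops out of equality of $\underline{\mathbf{d}}$-data together with $y_\gamma\mathbf{d}z\to0$ for the auxiliary net, and $z\underline{\mathbf{d}}x_\lambda\to0$ follows because $\underline{\mathbf{d}}^\vee$-Cauchyness forces $w\mathbf{d}x_\lambda\to w\mathbf{d}z$ uniformly over $w\in X$.

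For the reverse direction, given $\mathbf{d}$-Cauchy $(x_\lambda)$, I would use the chosen interpolation condition and continuity to produce a $\mathbf{d}$-directed $Y$ with $Y\equiv^\mathbf{d}(x_\lambda)$ — this is precisely the mechanism underlying \autoref{ctscor} (via \cite[Theorems 11.3, 11.6 and 11.7]{Bice2019a}), and for the middle condition it is here that the hypothesis $\underline{\mathbf{d}}^\vee_\circ$-completeness, in the guise of ``$X$ is $\mathbf{e}_\circ$-complete'', is consumed. Taking $x=\mathbf{d}$-$\max Y$ (which exists by $\mathbf{d}$-$\max$-completeness), one gets $\mathbf{d}(x_\lambda)=\mathbf{d}Y=\mathbf{d}x$ (using $Y\leq^\mathbf{d}x$), so $x$ is a $\mathbf{d}_\circ$-limit, and, since $(x_\lambda)\mathbf{d}=(x_\lambda)\underline{\mathbf{d}}$ for $\mathbf{d}$-Cauchy nets (\cite[(7.4)]{Bice2019a}) and $x=\underline{\mathbf{d}}$-$\sup Y$ (\cite[(10.4)]{Bice2019a}), one bounds $(x_\lambda)\mathbf{d}=Y\underline{\mathbf{d}}\leq x\mathbf{d}$ via $Y\leq^\mathbf{d}x$ and the triangle inequality, so $x$ is also a $\mathbf{d}^\bullet$-limit; hence $x_\lambda\barrowc x$ and $X$ is $\mathbf{d}^\bullet_\circ$-complete. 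For the first and third interpolation conditions $\underline{\mathbf{d}}^\vee_\circ$-completeness is not used in this step at all, being instead supplied for free by the forward direction, which keeps the stated equivalence uniformly correct.

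I expect the completeness decomposition, not the continuity equivalence, to be the main obstacle: one has to pin down exactly which part of Smyth convergence of a $\mathbf{d}$-Cauchy net is furnished by $\mathbf{d}$-$\max$-completeness and which residual part — the one controlled by the reverse distances $x_\lambda\underline{\mathbf{d}}x_\mu$ and by the slack between $\overline{\mathbf{d}}$ and $\underline{\mathbf{d}}$ — is furnished by $\underline{\mathbf{d}}^\vee_\circ$-completeness, and then one must confirm that each interpolation hypothesis really does slot into the template of \autoref{ctscor} under the blanket assumption $\overline{\mathbf{d}}\leq\underline{\mathbf{d}}$, including the $\mathbf{e}=\underline{\mathbf{d}}^\vee$ substitution in the middle case.
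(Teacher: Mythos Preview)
Your decomposition into a continuity half and a completeness half, with the predomain condition $\overline{\mathbf{d}}\leq\underline{\mathbf{d}}$ factored out, is exactly the paper's architecture, and your matching of the three interpolation hypotheses to the cases of \autoref{ctscor} is correct (the paper's parenthetical ``with $\mathbf{e}=\underline{\mathbf{d}}^\vee$ in the last case'' notwithstanding, the first conjunct of the third hypothesis is literally \autoref{ctscor}\,\eqref{ctscor2}).  The difference is one of packaging: where you unpack the completeness arguments by hand, the paper simply cites.  For the forward direction it invokes \autoref{Tdomaineqs} to get $\underline{\mathbf{d}}^\circ_\circ$-completeness and then \cite[(8.15)]{Bice2019a} to upgrade a $\underline{\mathbf{d}}^\circ_\circ$-limit of a $\underline{\mathbf{d}}^\vee$-Cauchy net to a $\underline{\mathbf{d}}^\vee_\circ$-limit, and it cites \cite[(11.2)]{Bice2019a} for $\mathbf{d}$-$\max$-completeness; for the backward direction it cites \cite[Corollary 11.8]{Bice2019a} wholesale.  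Your route through \autoref{dbhcont}\,\eqref{dbhcont3} for $\underline{\mathbf{d}}^\vee_\circ$-completeness is a genuine alternative to the paper's use of the domain duality in \autoref{Tdomaineqs}, and your direct verification that a $\mathbf{d}$-maximum of $Y\equiv^\mathbf{d}(x_\lambda)$ is a Smyth limit is precisely what \cite[Corollary 11.8]{Bice2019a} encapsulates.

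One small slip: in the backward step you conclude ``$x$ is a $\mathbf{d}^\bullet$-limit'' from $(x_\lambda)\mathbf{d}\leq x\mathbf{d}$, but that inequality characterises $\mathbf{d}_\bullet$-convergence, not $\mathbf{d}^\bullet$.  This is harmless, because once you have $\mathbf{d}(x_\lambda)=\mathbf{d}x$ for a $\mathbf{d}$-Cauchy net you already have full $\mathbf{d}^\bullet_\circ$-convergence (the $\liminf$ is a $\lim$), so the separate $\mathbf{d}^\bullet$ argument is redundant.  Similarly, your justification ``$\underline{\mathbf{d}}^\vee$-Cauchyness forces $w\mathbf{d}x_\lambda\to w\mathbf{d}z$ uniformly over $w$'' is correct but deserves one more line: $|w\mathbf{d}x_\lambda-w\mathbf{d}x_\mu|\leq x_\lambda\underline{\mathbf{d}}^\vee x_\mu$ gives uniform Cauchyness of $w\mapsto w\mathbf{d}x_\lambda$, hence uniform convergence to $w\mathbf{d}z$, hence $z\underline{\mathbf{d}}^\vee x_\lambda\to 0$.
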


\begin{proof}
If $X$ is a $\mathbf{d}^\bullet_\circ$-domain then $X$ is $\underline{\mathbf{d}}^\circ_\circ$-complete, by \autoref{Tdomaineqs}.  So any $\underline{\mathbf{d}}^\vee$-Cauchy $(x_n)$ has a $\underline{\mathbf{d}}^\circ_\circ$-limit, which is a $\underline{\mathbf{d}}^\vee_\circ$-limit, by \cite[(8.15)]{Bice2019a}, i.e. $X$ is $\underline{\mathbf{d}}^\vee_\circ$-complete.  As $X$ is $\mathbf{d}^\bullet_\circ$-continuous, any of the given interpolation conditions implies that $X$ is $\mathbf{d}$-$\max$-continuous, by \autoref{ctscor} (with $\mathbf{e}=\underline{\mathbf{d}}^\vee$ in the last case).  By \cite[(11.2)]{Bice2019a}, $X$ is also $\mathbf{d}$-$\max$-complete and hence a $\mathbf{d}$-$\max$-domain.

Conversely, say $X$ is a $\underline{\mathbf{d}}^\vee_\circ$-complete $\mathbf{d}$-$\max$-domain.  As $X$ is $\mathbf{d}$-$\max$-continuous, any of the given interpolation conditions then implies that $X$ is $\mathbf{d}^\bullet_\circ$-complete, by \cite[Corollary 11.8]{Bice2019a}.  By \autoref{maxctsequiv} \eqref{maxlimcts}, $X$ is also $\mathbf{d}^\bullet_\circ$-continuous and hence a $\mathbf{d}^\bullet_\circ$-domain.
\end{proof}

\section{Hausdorff Distances}\label{HD}

To complete predomains to domains, we need to find some larger space to embed them in.  Hyperspaces of subsets $\mathcal{P}(X)$ are a natural candidate, the only question is how to extend the distance from $X$ to $\mathcal{P}(X)$.

\begin{dfn}
For any $\mathbf{d}\in[0,\infty]^{X\times X}$, define $\mathbf{d}^\mathcal{H}$ and $\mathbf{d}_\mathcal{H}$ on $\mathcal{P}(X)$ by
\begin{align*}
Y\mathbf{d}^\mathcal{H}Z&=(Y\mathbf{d})Z=\inf_{z\in Z}\sup_{y\in Y}y\mathbf{d}z.\\
Y\mathbf{d}_\mathcal{H}Z&=Y(\mathbf{d}Z)=\sup_{y\in Y}\inf_{z\in Z}y\mathbf{d}z.
\end{align*}
\end{dfn}

The classical Hausdorff distance $\mathbf{d}_\mathcal{H}$ is well-known \textendash\, see \cite[Lemma 7.5.1]{Goubault2013} \textendash\, but the `reverse Hausdorff distance' $\mathbf{d}^\mathcal{H}$ does not appear to have been considered before.  This could be due to the focus on hemimetrics over general distances, as $\leq^{\mathbf{d}^\mathcal{H}}$ often fails to be reflexive, e.g. when $\mathbf{d}$ is a metric and $X$ has at least 2 points.  However, it is $\mathbf{d}^\mathcal{H}$ that we need to complete predomains to domains.

First we note some basic functorial properties.  In particular, it follows from \eqref{dH}, \eqref{de_H} and \eqref{de^H} that $\mathbf{d}^\mathcal{H}$ and $\mathbf{d}_\mathcal{H}$ are distances whenever $\mathbf{d}$ is a distance.

\begin{prp}\label{Hausfunc}
For any $\mathbf{d},\mathbf{e}\in[0,\infty]^{X\times X}$,
\begin{align}
\label{dH}\mathbf{d}_\mathcal{H}&\leq\mathbf{d}^\mathcal{H}.\\
\label{de_H}(\mathbf{d}\circ\mathbf{e})_\mathcal{H}&\leq\mathbf{d}_\mathcal{H}\circ\mathbf{e}_\mathcal{H}\precapprox(\mathbf{d}\circ\mathbf{e})_\mathcal{H}.\\
\label{de^H}(\mathbf{d}\circ\mathbf{e})^\mathcal{H}&\leq\mathbf{d}_\mathcal{H}\circ\mathbf{e}^\mathcal{H}\precapprox(\mathbf{d}\circ\mathbf{e})^\mathcal{H}.\\
\label{d^He^H}\mathbf{d}^\mathcal{H}\circ\mathbf{e}^\mathcal{H}&=\mathbf{d}^\mathcal{H}\circ\mathbf{e}_\mathcal{H}.
\end{align}
\end{prp}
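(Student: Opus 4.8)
The strategy is to verify the four (in)equalities by unwinding the definitions of $\mathbf{d}^\mathcal{H}$, $\mathbf{d}_\mathcal{H}$ and composition in $\mathbf{GRel}$, using only elementary $\inf$/$\sup$ manipulations together with the characterisation of $\precapprox$ via the function $\tfrac{\mathbf{a}}{\mathbf{b}}$. I would take the four lines in the order \eqref{dH}, \eqref{d^He^H}, \eqref{de_H}, \eqref{de^H}, since \eqref{d^He^H} is a clean equality that feeds into the other two, and the $\precapprox$ halves of \eqref{de_H} and \eqref{de^H} are the only real work.

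\emph{The easy parts.} For \eqref{dH}, note $Y\mathbf{d}_\mathcal{H}Z = \sup_{y}\inf_{z} y\mathbf{d}z \le \inf_{z}\sup_{y} y\mathbf{d}z = Y\mathbf{d}^\mathcal{H}Z$ is just the $\sup\inf \le \inf\sup$ inequality. For \eqref{d^He^H}, unwinding gives
\[
Y(\mathbf{d}^\mathcal{H}\circ\mathbf{e}^\mathcal{H})W=\inf_{Z}\Bigl(\inf_{z\in Z}\sup_{y\in Y}y\mathbf{d}z+\inf_{w\in W}\sup_{z\in Z}z\mathbf{e}w\Bigr),
\]
and the infimum over all intermediate sets $Z\subseteq X$ is clearly attained (or approached) by singletons $Z=\{z\}$, which collapses both terms and yields exactly $\inf_{z}\bigl((Y\mathbf{d})z+\inf_w z\mathbf{e}w\bigr)=Y(\mathbf{d}^\mathcal{H}\circ\mathbf{e}_\mathcal{H})W$; the same singleton reduction applied to $\mathbf{d}^\mathcal{H}\circ\mathbf{e}_\mathcal{H}$ shows the two agree. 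The left-hand ($\le$) inequalities in \eqref{de_H} and \eqref{de^H} follow by choosing, for the composite $(\mathbf{d}\circ\mathbf{e})$-expression, an intermediate point $z=z(y)$ near-optimal for each $y$ and then feeding the \emph{set} $\{z(y):y\in Y\}$ into the composite on the right-hand side; the triangle-type bookkeeping is routine once one writes $\inf_z(x\mathbf{d}z+z\mathbf{e}w)$ out.

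\emph{The main obstacle.} The genuinely non-trivial claims are the two $\precapprox$ assertions, i.e. $\mathbf{d}_\mathcal{H}\circ\mathbf{e}_\mathcal{H}\precapprox(\mathbf{d}\circ\mathbf{e})_\mathcal{H}$ and $\mathbf{d}_\mathcal{H}\circ\mathbf{e}^\mathcal{H}\precapprox(\mathbf{d}\circ\mathbf{e})^\mathcal{H}$. Here the subtlety is that $\mathbf{d}_\mathcal{H}\circ\mathbf{e}_\mathcal{H}$ can be strictly larger than $(\mathbf{d}\circ\mathbf{e})_\mathcal{H}$ pointwise — the intermediate set in the composition on the left is chosen \emph{once}, uniformly for all $y\in Y$, whereas the composite on the right lets the intermediate point depend on $y$. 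So I would argue via the uniform preorder directly: given $Z\subseteq\mathcal{P}(X)\times\mathcal{P}(X)$ with $\inf_{(Y,W)\in Z}Y(\mathbf{d}\circ\mathbf{e})_\mathcal{H}W=0$, one must produce intermediate sets witnessing $\inf_{(Y,W)\in Z}Y(\mathbf{d}_\mathcal{H}\circ\mathbf{e}_\mathcal{H})W=0$. Fix $(Y,W)$ with $Y(\mathbf{d}\circ\mathbf{e})_\mathcal{H}W<\epsilon$, so for every $y\in Y$ there is $w=w(y)\in W$ and then $z=z(y)\in X$ with $y\mathbf{d}z(y)+z(y)\mathbf{e}w(y)<\epsilon$. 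Take $V=\{z(y):y\in Y\}$. Then $Y\mathbf{d}_\mathcal{H}V=\sup_y\inf_{v\in V}y\mathbf{d}v\le\sup_y y\mathbf{d}z(y)<\epsilon$, and $V\mathbf{e}_\mathcal{H}W=\sup_{v\in V}\inf_{w'\in W}v\mathbf{e}w'\le\sup_y z(y)\mathbf{e}w(y)<\epsilon$, giving $Y(\mathbf{d}_\mathcal{H}\circ\mathbf{e}_\mathcal{H})W<2\epsilon$. The $\mathbf{e}^\mathcal{H}$ case is identical except one must check $V\mathbf{e}^\mathcal{H}W=\inf_{w'\in W}\sup_{v\in V}v\mathbf{e}w'$ stays small, which again works because the left-hand inequality $(\mathbf{d}\circ\mathbf{e})^\mathcal{H}$ already forces, for a \emph{single} near-optimal $w'\in W$, all the products $z(y)\mathbf{e}w'$ to be small — so one should redo the selection choosing $w'$ first (the one nearly realising $\inf_{w}\sup_{y}\ldots$) and then $z(y)$ for that fixed $w'$. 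I expect pinning down this order-of-quantifiers in the $\mathbf{e}^\mathcal{H}$ case to be the one place requiring care; everything else is formal.
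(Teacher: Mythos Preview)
Your approach is correct and essentially identical to the paper's: the same $\sup\inf\le\inf\sup$ for \eqref{dH}, the same reduction to singleton intermediate sets for \eqref{d^He^H}, and the same factor-of-$2$ construction $V=\{z(y):y\in Y\}$ yielding $\mathbf{d}_\mathcal{H}\circ\mathbf{e}_\mathcal{H}\le 2(\mathbf{d}\circ\mathbf{e})_\mathcal{H}$ (and likewise for \eqref{de^H}, choosing the single $w'\in W$ first) for the $\precapprox$ halves.

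One small slip worth flagging: your paragraph on the ``left-hand $\le$ inequalities'' in \eqref{de_H} and \eqref{de^H} actually re-describes the $\precapprox$ construction (building a specific intermediate set from near-optimal $z(y)$'s), which bounds the right-hand side from above, not below. The $\le$ direction is instead the genuinely trivial step of restricting the inner $\inf_{x\in X}$ in $y(\mathbf{d}\circ\mathbf{e})z$ to $x\in W$ for an \emph{arbitrary} $W$, giving $Y(\mathbf{d}\circ\mathbf{e})_\mathcal{H}Z\le Y\mathbf{d}_\mathcal{H}W+W\mathbf{e}_\mathcal{H}Z$ for every $W$ and hence $\le$ the infimum over $W$. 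This is presumably what you meant by ``routine'', but as written the description points the wrong way.
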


\begin{proof}\
\begin{itemize}
\item[\eqref{dH}] $Y\mathbf{d}_\mathcal{H}Z=Y(\mathbf{d}Z)=\sup\limits_{y\in Y}\inf\limits_{z\in Z}y\mathbf{d}z\leq\sup\limits_{y\in Y}\inf\limits_{z\in Z}Y\mathbf{d}z=(Y\mathbf{d})Z=Y\mathbf{d}^\mathcal{H}Z$.

\item[\eqref{de_H}]  First note that, for any $W,Y,Z\subseteq X$,
\begin{align*}
Y(\mathbf{d}\circ\mathbf{e})_\mathcal{H}Z&=Y((\mathbf{d}\circ\mathbf{e})Z)\\
&=\sup_{y\in Y}\inf_{z\in Z}\inf_{x\in X}(y\mathbf{d}x+x\mathbf{e}z)\\
&=\sup_{y\in Y}\inf_{x\in X}(y\mathbf{d}x+x\mathbf{e}Z)\\
&\leq\sup_{y\in Y}\inf_{w\in W}(y\mathbf{d}w+w\mathbf{e}Z)\\
&\leq\sup_{y\in Y}(y\mathbf{d}W+W(\mathbf{e}Z))\\
&=Y(\mathbf{d}W)+W(\mathbf{e}Z)\\
&=Y\mathbf{d}_\mathcal{H}W+W\mathbf{e}_\mathcal{H}Z,
\end{align*}
i.e. $(\mathbf{d}\circ\mathbf{e})_\mathcal{H}\leq\mathbf{d}_\mathcal{H}\circ\mathbf{e}_\mathcal{H}$.  On the other hand, for any $r>Y((\mathbf{d}\circ\mathbf{e})Z)$ and $y\in Y$, we have $w_y\in X$ and $z\in Z$ with $y\mathbf{d}w_y+w_y\mathbf{e}z<r$.  For $W=\{w_y:y\in Y\}$ we then have $Y(\mathbf{d}W)+W(\mathbf{d}Z)\leq2r$ and hence
\[\mathbf{d}_\mathcal{H}\circ\mathbf{e}_\mathcal{H}\leq2(\mathbf{d}\circ\mathbf{e})_\mathcal{H}.\]

\item[\eqref{de^H}]  First note that, for any $W,Y,Z\subseteq X$,
\begin{align*}
&Y(\mathbf{d}\circ\mathbf{e})^\mathcal{H}Z\\
=\ &(Y(\mathbf{d}\circ\mathbf{e}))Z\\
\leq\ &\inf_{z\in Z}\sup_{y\in Y}\inf_{w\in W}(y\mathbf{d}w+w\mathbf{e}z)\\
\leq\ &\inf_{z\in Z}\sup_{y\in Y}(y\mathbf{d}W+W\mathbf{e}z)\\
=\ &Y(\mathbf{d}W)+(W\mathbf{e})Z\\
=\ &Y\mathbf{d}_\mathcal{H}W+W\mathbf{e}^\mathcal{H}Z,
\end{align*}
i.e. $(\mathbf{d}\circ\mathbf{e})^\mathcal{H}\leq\mathbf{d}_\mathcal{H}\circ\mathbf{e}^\mathcal{H}$.  On the other hand, for any $r>(Y(\mathbf{d}\circ\mathbf{e}))Z$, we have $z\in Z$ such that, for all $y\in Y$, there is some $w_y\in X$ with $y\mathbf{d}w_y+w_y\mathbf{e}z<r$.  For $W=\{w_y:y\in Y\}$ we then have $Y(\mathbf{d}W)+(W\mathbf{d})Z\leq2r$ and hence
\[\mathbf{d}_\mathcal{H}\circ\mathbf{e}^\mathcal{H}\leq2(\mathbf{d}\circ\mathbf{e})^\mathcal{H}.\]

\item[\eqref{d^He^H}]  By \eqref{dH}, we have $\mathbf{d}^\mathcal{H}\circ\mathbf{e}_\mathcal{H}\leq\mathbf{d}^\mathcal{H}\circ\mathbf{e}^\mathcal{H}$.  Conversely, for any $W,Y,Z\subseteq X$,
\begin{align*}
Y(\mathbf{d}^\mathcal{H}\circ\mathbf{e}^\mathcal{H})Z&\leq\inf_{w\in W}(Y\mathbf{d}^\mathcal{H}\{w\}+\{w\}\mathbf{e}^\mathcal{H}Z)\\
&=\inf_{w\in W}(Y\mathbf{d}w+w\mathbf{e}Z)\\
&\leq(Y\mathbf{d})W+W(\mathbf{e}Z)\\
&=Y\mathbf{d}^\mathcal{H}W+W\mathbf{e}_\mathcal{H}Z.\qedhere
\end{align*}
\end{itemize}
\end{proof}

\begin{prp}\label{HausdorffProp}
$\mathcal{P}(X)$ is $\mathbf{d}^\mathcal{H}$-$\max$-complete and $\mathbf{d}_\mathcal{H}$-$\sup$-complete.
\end{prp}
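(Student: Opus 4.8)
The plan is to show that the union of the family does the job in both cases. Concretely, given any $\mathcal{Y}\subseteq\mathcal{P}(X)$, I would verify that $\bigcup\mathcal{Y}$ is a $\mathbf{d}^\mathcal{H}$-maximum of $\mathcal{Y}$ whenever $\mathcal{Y}$ is $\mathbf{d}^\mathcal{H}$-directed, and a $\mathbf{d}_\mathcal{H}$-supremum of $\mathcal{Y}$ whenever $\mathcal{Y}$ is $\mathbf{d}_\mathcal{H}$-directed. The first step is to isolate the underlying identities. Since $\inf_{w\in\bigcup\mathcal{Y}}=\inf_{Y\in\mathcal{Y}}\inf_{w\in Y}$ and $\sup_{w\in\bigcup\mathcal{Y}}=\sup_{Y\in\mathcal{Y}}\sup_{w\in Y}$, for every $Z\in\mathcal{P}(X)$ one gets
\[
Z\mathbf{d}^\mathcal{H}\bigcup\mathcal{Y}=\inf_{Y\in\mathcal{Y}}Z\mathbf{d}^\mathcal{H}Y\qquad\text{and}\qquad\bigl(\bigcup\mathcal{Y}\bigr)\mathbf{d}_\mathcal{H}Z=\sup_{Y\in\mathcal{Y}}Y\mathbf{d}_\mathcal{H}Z,
\]
that is, $\mathbf{d}^\mathcal{H}\bigcup\mathcal{Y}=\mathbf{d}^\mathcal{H}\mathcal{Y}$ and $(\bigcup\mathcal{Y})\mathbf{d}_\mathcal{H}=\mathcal{Y}\mathbf{d}_\mathcal{H}$. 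The first identity already delivers the ``maximum'' requirement $\mathbf{d}^\mathcal{H}\mathcal{Y}\le\mathbf{d}^\mathcal{H}\bigcup\mathcal{Y}$ and the second the ``supremum'' requirement $(\bigcup\mathcal{Y})\mathbf{d}_\mathcal{H}\le\mathcal{Y}\mathbf{d}_\mathcal{H}$, in both cases with equality.

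It then remains only to check that $\bigcup\mathcal{Y}$ is an upper $\le^{\mathbf{d}^\mathcal{H}}$-bound (respectively $\le^{\mathbf{d}_\mathcal{H}}$-bound) of $\mathcal{Y}$, and this is precisely where directedness enters — as it must, since $\mathbf{d}$ is not assumed reflexive, so $\bigcup\mathcal{Y}$ need not lie above its own members a priori. In the $\mathbf{d}^\mathcal{H}$ case the first identity above, applied with $Z=Y_0\in\mathcal{Y}$, gives $Y_0\mathbf{d}^\mathcal{H}\bigcup\mathcal{Y}=\inf_{Y\in\mathcal{Y}}Y_0\mathbf{d}^\mathcal{H}Y$, which is $0$ by the singleton instance $F=\{Y_0\}$ of the $\mathbf{d}^\mathcal{H}$-directedness condition. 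In the $\mathbf{d}_\mathcal{H}$ case, monotonicity of the inner infimum in the target set gives $Y_0\mathbf{d}_\mathcal{H}\bigcup\mathcal{Y}\le Y_0\mathbf{d}_\mathcal{H}Y$ for every $Y\in\mathcal{Y}$, hence $Y_0\mathbf{d}_\mathcal{H}\bigcup\mathcal{Y}\le\inf_{Y\in\mathcal{Y}}Y_0\mathbf{d}_\mathcal{H}Y=0$, again by $\mathbf{d}_\mathcal{H}$-directedness at $F=\{Y_0\}$. Either way $\mathcal{Y}\le^{\mathbf{d}^\mathcal{H}}\bigcup\mathcal{Y}$ (respectively $\mathcal{Y}\le^{\mathbf{d}_\mathcal{H}}\bigcup\mathcal{Y}$), so $\bigcup\mathcal{Y}$ is the desired $\mathbf{d}^\mathcal{H}$-maximum (respectively $\mathbf{d}_\mathcal{H}$-supremum), establishing both completeness claims.

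I do not anticipate a genuine obstacle here: once the two $\inf$/$\sup$ identities are in hand, everything else is a matter of unwinding the definitions of $\mathbf{d}$-$\max$, $\mathbf{d}$-$\sup$ and $\mathbf{d}$-directedness. The one point worth stating explicitly is the logical role of directedness, which is used solely — and only through its singleton instances — to make $\bigcup\mathcal{Y}$ an upper bound; the max/sup inequalities themselves are formal identities. In particular the triangle inequality plays no role, so the statement in fact holds for arbitrary $\mathbf{d}\in[0,\infty]^{X\times X}$.
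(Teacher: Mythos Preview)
Your proof is correct and follows essentially the same route as the paper: both establish the identities $\mathbf{d}^\mathcal{H}\bigcup\mathcal{Y}=\mathbf{d}^\mathcal{H}\mathcal{Y}$ and $(\bigcup\mathcal{Y})\mathbf{d}_\mathcal{H}=\mathcal{Y}\mathbf{d}_\mathcal{H}$, then use only $\mathbf{d}^\mathcal{H}$-finality (respectively $\mathbf{d}_\mathcal{H}$-finality) to obtain the upper-bound condition. The paper cites an external lemma for the $\mathbf{d}^\mathcal{H}$-$\max$ step where you unwind it directly, but the argument is the same, and your remark that only the singleton instances of directedness are needed matches the paper's parenthetical ``or just $\mathbf{d}^\mathcal{H}$-final''.
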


\begin{proof}
Note $\mathbf{d}^\mathcal{H}\mathcal{Y}=\mathbf{d}^\mathcal{H}(\bigcup\mathcal{Y})$, as
\[Z\mathbf{d}^\mathcal{H}\mathcal{Y}=\inf_{Y\in\mathcal{Y}}Z\mathbf{d}^\mathcal{H}Y=\inf_{Y\in\mathcal{Y}}(Z\mathbf{d})Y=(Z\mathbf{d})\bigcup\mathcal{Y}=Z\mathbf{d}^\mathcal{H}(\bigcup\mathcal{Y}).\]
So if $\mathcal{Y}\subseteq\mathcal{P}(X)$ is $\mathbf{d}^\mathcal{H}$-directed or just $\mathbf{d}^\mathcal{H}$-final then $\bigcup\mathcal{Y}=\mathbf{d}^\mathcal{H}$-$\max\mathcal{Y}$, by \cite[(10.5)]{Bice2019a}, i.e. $\mathcal{P}(X)$ is $\mathbf{d}^\mathcal{H}$-$\max$-complete.  Likewise $\mathcal{Y}\mathbf{d}_\mathcal{H}=(\bigcup\mathcal{Y})\mathbf{d}_\mathcal{H}$, as
\[\mathcal{Y}\mathbf{d}_\mathcal{H}Z=\sup_{Y\in\mathcal{Y}}Y\mathbf{d}_\mathcal{H}Z=\sup_{Y\in\mathcal{Y}}Y(\mathbf{d}Z)=\bigcup\mathcal{Y}(\mathbf{d}Z)=(\bigcup\mathcal{Y})\mathbf{d}_\mathcal{H}Z.\]
If $\mathcal{Y}\subseteq\mathcal{P}(X)$ is $\mathbf{d}_\mathcal{H}$-directed or just $\mathbf{d}_\mathcal{H}$-final then, for all $Z\in\mathcal{Y}$,
\[Z\mathbf{d}_\mathcal{H}\bigcup\mathcal{Y}=Z(\mathbf{d}\bigcup\mathcal{Y})\leq\inf_{Y\in\mathcal{Y}}Z(\mathbf{d}Y)=\inf_{Y\in\mathcal{Y}}Z\mathbf{d}_\mathcal{H}Y=Z\mathbf{d}_\mathcal{H}\mathcal{Y}=0,\]
i.e. $Z\leq^{\mathbf{d}_\mathcal{H}}\bigcup\mathcal{Y}$ and hence $\bigcup\mathcal{Y}=\mathbf{d}_\mathcal{H}$-$\sup\mathcal{Y}$, i.e. $\mathcal{P}(X)$ is $\mathbf{d}_\mathcal{H}$-$\sup$-complete.
\end{proof}

Note that $\leq^{\mathbf{d}_\mathcal{H}}$ is reflexive precisely on the $\mathbf{d}$-final subsets of $X$.  In particular, $\mathbf{d}_\mathcal{H}$ is a hemimetric when restricted to the $\mathbf{d}$-directed subsets, which we denote by
\[\mathcal{P}^\mathbf{d}(X)=\{Y\subseteq X:Y\text{ is $\mathbf{d}$-directed}\}.\]
In contrast, $\mathbf{d}^\mathcal{H}$ may not be a hemimetric on $\mathcal{P}^\mathbf{d}(X)$, even when $\mathbf{d}$ is a hemimetric.  But there is one special situation in which this occurs.

\begin{dfn}
We call $X$ \emph{$\mathbf{d}$-Noetherian} if every $\mathbf{d}$-Cauchy sequence is $\mathbf{d}^\mathrm{op}$-Cauchy.
\end{dfn}

Note that if $\leq$ is a partial order relation on $X$ (identified with its characteristic function) then $X$ is $\leq$-Noetherian iff every increasing sequence is eventually constant, i.e. iff $X$ is Noetherian (or `upwards well-ordered') in the usual sense.

\begin{prp}\label{d^Hhemi}
The following are equivalent.
\begin{enumerate}
\item\label{dN} $X$ is $\mathbf{d}$-Noetherian.
\item\label{dpCnet} Every $\mathbf{d}$-pre-Cauchy net in $X$ is $\mathbf{d}^\mathrm{op}$-Cauchy.
\item\label{dCsubseq} Every $\mathbf{d}$-Cauchy sequence in $X$ has a $\mathbf{d}^\mathrm{op}$-pre-Cauchy subnet.
\end{enumerate}
\end{prp}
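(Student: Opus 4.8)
The plan is to prove the cycle $\eqref{dpCnet}\Rightarrow\eqref{dN}\Rightarrow\eqref{dCsubseq}\Rightarrow\eqref{dpCnet}$, since each of these single implications should be manageable with the tools from \cite{Bice2019a} on Cauchy nets and subnets. First note $\eqref{dpCnet}\Rightarrow\eqref{dN}$ is immediate: a $\mathbf{d}$-Cauchy sequence is in particular $\mathbf{d}$-pre-Cauchy (as $\limsup\leq\sup_{\gamma\prec\delta}$), hence $\mathbf{d}^\mathrm{op}$-Cauchy by hypothesis.

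For $\eqref{dN}\Rightarrow\eqref{dCsubseq}$, let $(x_n)$ be a $\mathbf{d}$-Cauchy sequence. By $\mathbf{d}$-Noetherianity it is $\mathbf{d}^\mathrm{op}$-Cauchy, so $\lim_m\sup_{m\leq n}x_n\mathbf{d}^\mathrm{op}x_m=\lim_m\sup_{m\leq n}x_m\mathbf{d}x_n=0$, wait — one must be slightly careful about which index is which. Being $\mathbf{d}^\mathrm{op}$-Cauchy means $\lim_\gamma\sup_{\gamma\prec\delta}x_\gamma\mathbf{d}^\mathrm{op}x_\delta=\lim_\gamma\sup_{\gamma\prec\delta}x_\delta\mathbf{d}x_\gamma=0$. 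Combined with $\mathbf{d}$-Cauchyness, the sequence is then Cauchy for $\mathbf{d}^\vee=\mathbf{d}\vee\mathbf{d}^\mathrm{op}$, and a fortiori $\mathbf{d}^\mathrm{op}$-pre-Cauchy; the trivial (identity) subnet already works, or if one wants to stay closer to the literal statement one passes to any cofinal subnet. So $\eqref{dN}\Rightarrow\eqref{dCsubseq}$ is in fact almost a triviality once Noetherianity is unpacked — the only subtlety is matching the net-direction conventions in \eqref{pre-Cauchy} and \eqref{Cauchy}.

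The substantive implication is $\eqref{dCsubseq}\Rightarrow\eqref{dpCnet}$, and this is where I expect the main obstacle. Given a $\mathbf{d}$-pre-Cauchy net $(x_\lambda)_{\lambda\in\Lambda}$, I want to conclude it is $\mathbf{d}^\mathrm{op}$-Cauchy. The idea is to argue by contradiction: if $(x_\lambda)$ is \emph{not} $\mathbf{d}^\mathrm{op}$-Cauchy, then $\limsup_\gamma\sup_{\gamma\prec\delta}x_\delta\mathbf{d}x_\gamma=\varepsilon>0$, so one can extract a cofinal sequence of indices $\lambda(1)\prec\lambda(2)\prec\cdots$ along which this fails badly, and arrange (using pre-Cauchyness to control $\limsup_\delta x_{\lambda(k)}\mathbf{d}x_\delta$) that $(x_{\lambda(k)})_k$ is a $\mathbf{d}$-Cauchy sequence — possibly after a further diagonal passage to a subsequence, invoking \cite[Theorem 7.3]{Bice2019a} (the same extraction principle used at the end of the proof of \autoref{dbhcont}) to turn the pre-Cauchy condition along the chosen indices into a genuine Cauchy sequence. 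By hypothesis \eqref{dCsubseq}, $(x_{\lambda(k)})_k$ has a $\mathbf{d}^\mathrm{op}$-pre-Cauchy subnet, which forces $\limsup$ of the reverse distances along that subnet to be $0$, contradicting the choice of $\varepsilon$. The delicate point — the main obstacle — is making the extraction actually produce a $\mathbf{d}$-\emph{Cauchy} sequence (not merely pre-Cauchy) from a general net, while simultaneously keeping the indices cofinal and keeping the "bad pair" witnesses $\delta$ under control; this is exactly the kind of careful net-to-sequence surgery carried out via \cite[Theorem 7.3]{Bice2019a}, so I would lean on that result rather than redo it by hand.

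Alternatively, one could try a cleaner route for $\eqref{dN}\Leftrightarrow\eqref{dpCnet}$ directly, using the formula $(x_\lambda)\mathbf{d}=\limsup_\lambda x_\lambda\mathbf{d}$ and the characterisation of pre-Cauchyness via $\limsup\limsup$, together with \cite[(8.15)]{Bice2019a} relating $\mathbf{d}^\vee$-convergence to $\mathbf{d}$- and $\mathbf{d}^\mathrm{op}$-convergence; but I expect the contradiction-plus-extraction argument above to be the most robust, since it only needs the sequential hypothesis \eqref{dCsubseq} as a black box.
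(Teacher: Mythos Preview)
Your overall strategy matches the paper's: the easy implications $\eqref{dpCnet}\Rightarrow\eqref{dN}\Rightarrow\eqref{dCsubseq}$ are dispatched immediately, and the real content is the contrapositive $\neg\eqref{dpCnet}\Rightarrow\neg\eqref{dCsubseq}$, carried out by extracting a bad $\mathbf{d}$-Cauchy sequence from a $\mathbf{d}$-pre-Cauchy net that fails to be $\mathbf{d}^\mathrm{op}$-Cauchy.

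There is, however, one refinement in the paper that you do not identify and that makes the construction go through cleanly. Before extracting the sequence, the paper first upgrades ``not $\mathbf{d}^\mathrm{op}$-Cauchy'' to ``not $\mathbf{d}^\mathrm{op}$-\emph{pre}-Cauchy'' via \cite[Proposition~7.2]{Bice2019a}: if $(x_\lambda)$ were $\mathbf{d}^\mathrm{op}$-pre-Cauchy, then together with $\mathbf{d}$-pre-Cauchyness it would be $\mathbf{d}^\vee$-pre-Cauchy, hence $\mathbf{d}^\vee$-Cauchy (since $\mathbf{d}^\vee$ is symmetric), hence $\mathbf{d}^\mathrm{op}$-Cauchy after all. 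This gives the stronger starting hypothesis $\limsup_\gamma\limsup_\lambda x_\lambda\mathbf{d}x_\gamma=\varepsilon>0$, which is what the recursive construction actually uses.

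Relatedly, the point you flag as ``contradicting the choice of $\varepsilon$'' needs more than your proposal indicates. It is not enough to build a $\mathbf{d}$-Cauchy sequence with some large reverse distances; you must rule out \emph{every} $\mathbf{d}^\mathrm{op}$-pre-Cauchy subnet. The paper does this by arranging geometric decay $x_n\mathbf{d}x_{n+1}<\varepsilon/4^{n+1}$ together with $x_{n+1}\mathbf{d}x_n>\varepsilon/2$, and then observing $x_{n+1}\mathbf{d}x_m\geq x_{n+1}\mathbf{d}x_n-x_m\mathbf{d}x_n>\varepsilon/6$ for all $m<n$, so that \emph{every} pair of distinct terms has reverse distance bounded below. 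This is done by a direct hands-on recursion rather than by invoking \cite[Theorem~7.3]{Bice2019a}; indeed Theorem~7.3 only produces a Cauchy \emph{subnet}, not a sequence, so it would not suffice on its own here.
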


\begin{proof}
We immediately see that \eqref{dpCnet} $\Rightarrow$ \eqref{dN} $\Rightarrow$ \eqref{dCsubseq}.  Conversely, say \eqref{dpCnet} fails, so we have $\mathbf{d}$-pre-Cauchy $(x_\lambda)\subseteq X$ that is not $\mathbf{d}^\mathrm{op}$-Cauchy.  Then $(x_\lambda)$ is not even $\mathbf{d}^\mathrm{op}$-pre-Cauchy, otherwise $(x_\lambda)$ would be $\mathbf{d}^\vee$-pre-Cauchy and hence $\mathbf{d}^\vee$-Cauchy, by \cite[Proposition 7.2]{Bice2019a}.  Thus we have 
\begin{align*}
\lim_\gamma x_\gamma\mathbf{d}(x_\lambda)&=0.\\
\epsilon=\limsup_\gamma\,(x_\lambda)\mathbf{d}x_\gamma&>0.
\end{align*}
Thus we can take $\lambda_1$ with
\begin{align*}
\lim_\gamma x_{\lambda_1}\mathbf{d}x_\gamma&<\epsilon/4.\\
\lim_\gamma\,x_\gamma\mathbf{d}x_{\lambda_1}&>\epsilon/2.
\end{align*}
Then we can take $\lambda_2$ with $x_{\lambda_1}\mathbf{d}x_{\lambda_2}<\epsilon/4$, $x_{\lambda_2}\mathbf{d}x_{\lambda_1}>\epsilon/2$ and
\begin{align*}
\lim_\gamma x_{\lambda_2}\mathbf{d}x_\gamma&<\epsilon/16.\\
\lim_\gamma\,x_\gamma\mathbf{d}x_{\lambda_2}&>\epsilon/2.
\end{align*}
Continuing in this way we obtain a sequence $x_n=x_{\lambda_n}$ such that
\begin{align*}
x_n\mathbf{d}x_{n+1}&<\epsilon/4^{n+1}\\
x_{n+1}\mathbf{d}x_n&>\epsilon/2.
\end{align*}
Thus $(x_n)$ is $\mathbf{d}$-Cauchy and, for any $m<n$, $x_m\mathbf{d}x_n<\epsilon/3$ and hence
\[x_{n+1}\mathbf{d}x_m\geq x_{n+1}\mathbf{d}x_n-x_m\mathbf{d}x_n>\epsilon/6,\]
so $(x_n)$ has no $\mathbf{d}^\mathrm{op}$-pre-Cauchy subnet, i.e. \eqref{dCsubseq} fails, completing the logical loop.
\end{proof}

\begin{prp}\label{dNothdH}
If $X$ is $\mathbf{d}$-Noetherian then $\mathbf{d}^\mathcal{H}$ is a hemimetric on $\mathcal{P}^\mathbf{d}(X)$.
\end{prp}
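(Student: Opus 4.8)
My plan is to reduce the claim to the single fact that $\leq^{\mathbf{d}^\mathcal{H}}$ is reflexive on $\mathcal{P}^\mathbf{d}(X)$. Since $\mathbf{d}$ is a distance, $\mathbf{d}^\mathcal{H}$ is a distance on $\mathcal{P}(X)$ by \autoref{Hausfunc} (combine \eqref{dH} and \eqref{de^H}), and the restriction of a distance to any subset is again a distance --- the condition $\mathbf{a}\leq\mathbf{a}\circ\mathbf{a}$, i.e. $x\mathbf{a}y\leq\inf_z(x\mathbf{a}z+z\mathbf{a}y)$, only becomes easier to satisfy when the infimum ranges over a smaller set. Hence being a hemimetric on $\mathcal{P}^\mathbf{d}(X)$ amounts to the reflexivity of $\leq^{\mathbf{d}^\mathcal{H}}$ there, i.e. to $Y\mathbf{d}^\mathcal{H}Y=(Y\mathbf{d})Y=\inf_{z\in Y}\sup_{y\in Y}y\mathbf{d}z=0$ for every $\mathbf{d}$-directed $Y$; equivalently, for every $\epsilon>0$ there should be $z\in Y$ with $y\mathbf{d}z<\epsilon$ for all $y\in Y$. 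This is the quantitative analog of the classical fact that a directed subset of a Noetherian poset has a maximum, and proving it is where $\mathbf{d}$-Noetherianity enters.

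For this I would argue by contradiction. If $(Y\mathbf{d})Y>0$ then there is $\epsilon>0$ with the property that for every $z\in Y$ there is $y\in Y$ such that $y\mathbf{d}z>\epsilon$. Starting from an arbitrary $z_0\in Y$ (which exists, as $\mathbf{d}$-directed sets are non-empty), I build a sequence $(z_n)\subseteq Y$ recursively: given $z_n$, pick $y_n\in Y$ with $y_n\mathbf{d}z_n>\epsilon$, and then apply $\mathbf{d}$-directedness of $Y$ to the finite set $\{z_n,y_n\}$ to get $z_{n+1}\in Y$ with $z_n\mathbf{d}z_{n+1}<\epsilon/4^{n+2}$ and $y_n\mathbf{d}z_{n+1}<\epsilon/4^{n+2}$. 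The triangle inequality \eqref{triangle} then yields $z_{n+1}\mathbf{d}z_n\geq y_n\mathbf{d}z_n-y_n\mathbf{d}z_{n+1}>\epsilon/2$, while telescoping the triangle inequality along $z_m,z_{m+1},\dots,z_n$ gives $z_m\mathbf{d}z_n<\epsilon/4^{m+1}$ for $m<n$, so that $\sup_{n>m}z_m\mathbf{d}z_n\to0$ and $(z_n)$ is $\mathbf{d}$-Cauchy. But $z_{n+1}\mathbf{d}z_n>\epsilon/2$ for every $n$ shows $(z_n)$ is not $\mathbf{d}^\mathrm{op}$-Cauchy, contradicting that $X$ is $\mathbf{d}$-Noetherian. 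Therefore $(Y\mathbf{d})Y=0$, i.e. $Y\leq^{\mathbf{d}^\mathcal{H}}Y$, and $\mathbf{d}^\mathcal{H}$ is a hemimetric on $\mathcal{P}^\mathbf{d}(X)$.

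The crux is securing, at each stage, one point $z_{n+1}$ of $Y$ that is simultaneously close above the previous point $z_n$ --- needed to keep the sequence $\mathbf{d}$-Cauchy --- and close above the witness $y_n$ --- needed to push $z_{n+1}$ far above $z_n$ in the reverse direction and thereby obstruct $\mathbf{d}^\mathrm{op}$-Cauchyness; $\mathbf{d}$-directedness is precisely what supplies such a common near-upper-bound of $\{z_n,y_n\}$. Taking the tolerances to be the geometric sequence $\epsilon/4^{n+2}$ is the bookkeeping that makes the telescoped sums summable (preserving Cauchyness) while leaving each single step's error well below $\epsilon$ (so the reverse estimates survive). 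A slightly slicker alternative avoiding the explicit recursion would be to turn $Y$ directly into a $\mathbf{d}$-pre-Cauchy net $(z_{(F,n)})_{(F,n)\in\mathcal{F}(Y)\times\mathbb{N}}$ by picking $z_{(F,n)}\in Y$ with $\sup_{y\in F}y\mathbf{d}z_{(F,n)}<1/n$, invoke \autoref{d^Hhemi} \eqref{dpCnet} to conclude this net is $\mathbf{d}^\mathrm{op}$-Cauchy, and then read off from that an element of $Y$ lying uniformly close above all of $Y$; I would present whichever version reads more cleanly.
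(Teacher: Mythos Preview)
Your proof is correct. Both the main contradiction argument and the alternative you sketch at the end are valid.

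Interestingly, the paper takes precisely your ``slicker alternative'': it cites \cite[(9.8)]{Bice2019a} to produce a $\mathbf{d}$-pre-Cauchy net $(x_\lambda)\subseteq Y$ with $Y\leq^\mathbf{d}(x_\lambda)$, applies \autoref{d^Hhemi}\,\eqref{dpCnet} to conclude $(x_\lambda)$ is $\mathbf{d}^\mathrm{op}$-Cauchy, and then for given $\epsilon>0$ picks $\gamma$ with $(x_\lambda)\mathbf{d}x_\gamma<\epsilon$, whence $y\mathbf{d}x_\gamma\leq y\mathbf{d}(x_\lambda)+(x_\lambda)\mathbf{d}x_\gamma<\epsilon$ for all $y\in Y$. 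Your main argument instead goes directly by contradiction: you build a $\mathbf{d}$-Cauchy sequence in $Y$ that is not $\mathbf{d}^\mathrm{op}$-Cauchy, essentially inlining the sequence-extraction step from the proof of \autoref{d^Hhemi} itself. This buys you a self-contained argument that avoids both the external citation and the detour through \autoref{d^Hhemi}, at the cost of repeating machinery already established there. The paper's route is shorter and better factored given the surrounding material; yours would stand alone better in isolation.
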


\begin{proof}
For any $Y\in\mathcal{P}^\mathbf{d}(X)$, we have $\mathbf{d}$-pre-Cauchy $(x_\lambda)\subseteq Y\leq^\mathbf{d}(x_\lambda)$, by \cite[(9.8)]{Bice2019a}.  By \eqref{dpCnet} above, $(x_\lambda)$ is $\mathbf{d}^\mathrm{op}$-(pre-)Cauchy so, for any $\epsilon>0$, we have $\gamma$ such that $(x_\lambda)\mathbf{d}x_\gamma<\epsilon$.  Thus, for all $y\in Y$, $y\mathbf{d}x_\gamma\leq y\mathbf{d}(x_\lambda)+(x_\lambda)\mathbf{d}x_\gamma<\epsilon$ and hence $Y\mathbf{d}^\mathcal{H}Y\leq Y\mathbf{d}x_\gamma<\epsilon$, i.e. $Y\mathbf{d}^\mathcal{H}Y=0$.
\end{proof}

Now we generalise the construction of a domain from an abstract basis.

\begin{thm}\label{predomaincompletion}
If $X$ is $\mathbf{d}$-$\max$-continuous then
\begin{equation}\label{dH=dH|}
\underline{\mathbf{d}^\mathcal{H}|_{\mathcal{P}^\mathbf{d}(X)}}\ =\ \mathbf{d}_\mathcal{H}|_{\mathcal{P}^\mathbf{d}(X)}.
\end{equation}
Moreover, $\mathcal{P}^\mathbf{d}(X)$ is a $\mathbf{d}^\mathcal{H}$-$\max$-domain with $\mathbf{d}^\mathcal{H}$-$\max$-basis $\{(\leq^\mathbf{d}x):x\in X\}$ and
\begin{equation}\label{dHxdy}
(\leq^\mathbf{d}x)\mathbf{d}^\mathcal{H}(\leq^\mathbf{d}y)\ \leq\ x\mathbf{d}y.
\end{equation}
\end{thm}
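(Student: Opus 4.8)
The plan is to unpack the three assertions in turn, leaning on \autoref{maxctsequiv} throughout. First I would record the structural consequences of $\mathbf{d}$-$\max$-continuity: by \autoref{maxctsequiv}\eqref{Fd<d<Fd} together with the equivalences used in the proof of \eqref{Fd<d<approxFd}$\Rightarrow$\eqref{dbardirected}, for every $x\in X$ the set $(\leq^\mathbf{d}x)$ is $\mathbf{d}$-directed (hence a non-empty member of $\mathcal{P}^\mathbf{d}(X)$), $x=\mathbf{d}$-$\max(\leq^\mathbf{d}x)$, and $\mathbf{d}\circ\leq^\mathbf{d}\leq\mathbf{d}$. Equation \eqref{dHxdy} then falls out immediately: since $w\leq^\mathbf{d}x$ forces $w\mathbf{d}z\leq w\mathbf{d}x+x\mathbf{d}z=x\mathbf{d}z$, we get $(\leq^\mathbf{d}x)\mathbf{d}^\mathcal{H}(\leq^\mathbf{d}y)=\inf_{z\leq^\mathbf{d}y}\sup_{w\leq^\mathbf{d}x}w\mathbf{d}z\leq\inf_{z\leq^\mathbf{d}y}x\mathbf{d}z=x(\mathbf{d}\circ\leq^\mathbf{d})y\leq x\mathbf{d}y$.

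Next I would prove \eqref{dH=dH|}. The inequality $\underline{\mathbf{d}^\mathcal{H}}\leq\mathbf{d}_\mathcal{H}$ on $\mathcal{P}^\mathbf{d}(X)$ uses only \eqref{triangle}: for $W,Y,Z\in\mathcal{P}^\mathbf{d}(X)$ and $z\in Z$ we have $\sup_{w\in W}w\mathbf{d}y\leq\sup_{w\in W}w\mathbf{d}z+z\mathbf{d}y$, so taking $\inf_{y\in Y}$ and then $\inf_{z\in Z}$ gives $W\mathbf{d}^\mathcal{H}Y\leq W\mathbf{d}^\mathcal{H}Z+Z\mathbf{d}_\mathcal{H}Y$, whence the claim. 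The reverse inequality $\underline{\mathbf{d}^\mathcal{H}}\geq\mathbf{d}_\mathcal{H}$ is the main obstacle, and I expect to handle it by a two-step approximation. Fix $z\in Z$ and $\epsilon>0$. Since $Z$ is $\mathbf{d}$-directed there is $z_1\in Z$ with $z\mathbf{d}z_1<\epsilon$; since $z_1=\mathbf{d}$-$\max(\leq^\mathbf{d}z_1)$ we have $\mathbf{d}(\leq^\mathbf{d}z_1)\leq\mathbf{d}z_1$, so $z\mathbf{d}(\leq^\mathbf{d}z_1)\leq z\mathbf{d}z_1<\epsilon$ and we may pick $w_0\leq^\mathbf{d}z_1$ with $z\mathbf{d}w_0<\epsilon$. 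Then $w_0\mathbf{d}y\geq z\mathbf{d}y-z\mathbf{d}w_0>z\mathbf{d}y-\epsilon$ for all $y\in Y$, giving $(\leq^\mathbf{d}z_1)\mathbf{d}^\mathcal{H}Y\geq z\mathbf{d}Y-\epsilon$, while $(\leq^\mathbf{d}z_1)\mathbf{d}^\mathcal{H}Z=0$ because $z_1\in Z$ and $Z$ is $\mathbf{d}$-directed. As $(\leq^\mathbf{d}z_1)\in\mathcal{P}^\mathbf{d}(X)$, this shows $Z\underline{\mathbf{d}^\mathcal{H}}Y\geq z\mathbf{d}Y-\epsilon$; letting $\epsilon\to0$ and varying $z$ over $Z$ yields $Z\underline{\mathbf{d}^\mathcal{H}}Y\geq Z\mathbf{d}_\mathcal{H}Y$. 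The point here is that directedness of $Z$ lets one slide up to a set $(\leq^\mathbf{d}z_1)$ whose maximality then slides a near-copy $w_0$ of $z$ back down into it, so that $(\leq^\mathbf{d}z_1)$ genuinely realises the distance $z\mathbf{d}Y$.

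A computation dual to the easy half of \eqref{dH=dH|} then gives the predomain condition: for $z\in Z$ choose $y_z\in Y$ with $z\mathbf{d}y_z$ within $\epsilon$ of $z\mathbf{d}Y$ and use $z\mathbf{d}w\leq z\mathbf{d}y_z+y_z\mathbf{d}w$ to get $Z\mathbf{d}^\mathcal{H}W\leq Y\mathbf{d}^\mathcal{H}W+Z\mathbf{d}_\mathcal{H}Y+\epsilon$, hence $\overline{\mathbf{d}^\mathcal{H}}\leq\mathbf{d}_\mathcal{H}=\underline{\mathbf{d}^\mathcal{H}}$ on $\mathcal{P}^\mathbf{d}(X)$; together with $\mathbf{d}^\mathcal{H}$ being a distance (as a restriction of the distance $\mathbf{d}^\mathcal{H}$ on $\mathcal{P}(X)$) this makes $\mathcal{P}^\mathbf{d}(X)$ a $\mathbf{d}^\mathcal{H}$-$\max$-predomain.

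Finally I would verify $\mathbf{d}^\mathcal{H}$-$\max$-completeness and the basis assertion (the latter also yielding $\mathbf{d}^\mathcal{H}$-$\max$-continuity). For completeness: a $\mathbf{d}^\mathcal{H}$-directed $\mathcal{W}\subseteq\mathcal{P}^\mathbf{d}(X)$ has $\bigcup\mathcal{W}=\mathbf{d}^\mathcal{H}$-$\max\mathcal{W}$ in $\mathcal{P}(X)$ by \autoref{HausdorffProp}, so it is enough to see $\bigcup\mathcal{W}$ is $\mathbf{d}$-directed; for a finite $F\subseteq\bigcup\mathcal{W}$ one picks members of $\mathcal{W}$ covering $F$, uses $\mathbf{d}^\mathcal{H}$-directedness to find a single $Z\in\mathcal{W}$ almost dominating each, and then uses $\mathbf{d}$-directedness of $Z$ to find one near-upper-bound of $F$ inside $Z$. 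For the basis, given $Y\in\mathcal{P}^\mathbf{d}(X)$ I take $\mathcal{Y}=\{(\leq^\mathbf{d}x):x\in Y\}$; it is $\mathbf{d}^\mathcal{H}$-directed by \eqref{dHxdy} and $\mathbf{d}$-directedness of $Y$, and $\mathcal{Y}\leq^{\mathbf{d}^\mathcal{H}}Y$ since $(\leq^\mathbf{d}x)\mathbf{d}^\mathcal{H}Y\leq x\mathbf{d}Y=0$ for $x\in Y$. The remaining condition $\mathbf{d}^\mathcal{H}\mathcal{Y}\leq\mathbf{d}^\mathcal{H}Y$ uses the same slide-up/slide-down trick: for $W\in\mathcal{P}^\mathbf{d}(X)$ and $y\in Y$, pick $x\in Y$ with $y\mathbf{d}x<\epsilon$, then $w_0\leq^\mathbf{d}x$ with $y\mathbf{d}w_0<\epsilon$, so $(W\mathbf{d})w_0\leq(W\mathbf{d})y+\epsilon$ and therefore $W\mathbf{d}^\mathcal{H}(\leq^\mathbf{d}x)\leq(W\mathbf{d})y+\epsilon$; infimising over $y\in Y$ gives $W\mathbf{d}^\mathcal{H}\mathcal{Y}\leq W\mathbf{d}^\mathcal{H}Y$. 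Hence $Y=\mathbf{d}^\mathcal{H}$-$\max\mathcal{Y}$, and assembling everything shows $\mathcal{P}^\mathbf{d}(X)$ is a $\mathbf{d}^\mathcal{H}$-$\max$-domain with $\mathbf{d}^\mathcal{H}$-$\max$-basis $\{(\leq^\mathbf{d}x):x\in X\}$.
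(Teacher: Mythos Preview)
Your proof is correct and follows essentially the same route as the paper's: both use the principal directed sets $(\leq^\mathbf{d}x)$ as the key test elements for the hard inequality in \eqref{dH=dH|}, verify the basis property by showing $Y=\mathbf{d}^\mathcal{H}$-$\max\{(\leq^\mathbf{d}x):x\in Y\}$, and obtain completeness from \autoref{HausdorffProp} together with closure of $\mathcal{P}^\mathbf{d}(X)$ under directed unions. The only difference is presentational: where the paper invokes the identities $x\underline{\mathbf{d}}=(\leq^\mathbf{d}x)\mathbf{d}$ and the functorial inequalities \eqref{de^H}, \eqref{d^He^H} from \autoref{Hausfunc}, you unfold these into direct $\epsilon$-arguments via your ``slide-up/slide-down'' trick (finding $z_1\in Z$ with $z\mathbf{d}z_1<\epsilon$ and then $w_0\leq^\mathbf{d}z_1$ with $z\mathbf{d}w_0<\epsilon$), which is exactly what those identities encode.
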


\begin{proof}\
\begin{itemize}
\item[\eqref{dH=dH|}]  As $\mathbf{d}\leq\mathbf{d}\circ\underline{\mathbf{d}}$, \eqref{de^H} and \eqref{d^He^H} yield $\mathbf{d}^\mathcal{H}\leq\mathbf{d}^\mathcal{H}\circ\underline{\mathbf{d}}_\mathcal{H}$ and hence $\underline{\mathbf{d}^\mathcal{H}}\leq\underline{\mathbf{d}}_\mathcal{H}\leq\mathbf{d}_\mathcal{H}$.  As $X$ is $\mathbf{d}$-$\max$-continuous, for all $x\in X$, $x=\mathbf{d}$-$\max(\leq^\mathbf{d}x)=\underline{\mathbf{d}}$-$\sup(\leq^\mathbf{d}x)$, by \cite[(9.2)]{Bice2019a}, so $x\underline{\mathbf{d}}=(\leq^\mathbf{d}x)\underline{\mathbf{d}}=(\leq^\mathbf{d}x)\mathbf{d}$, by \cite[(10.4)]{Bice2019a}.  Thus, for any $Y,Z\in\mathcal{P}^\mathbf{d}(X)$,
\begin{align*}
Y\mathbf{d}_\mathcal{H}Z=Y(\mathbf{d}Z)&=\sup_{y\in Y}\inf_{z\in Z}y\mathbf{d}z\\
&\leq\sup_{y\in Y}\inf_{x\in Y}\inf_{z\in Z}(y\mathbf{d}x+x\underline{\mathbf{d}}z)\\
&\leq\sup_{y\in Y}\inf_{x\in Y}y\mathbf{d}x+\sup_{x\in Y}\inf_{z\in Z}x\underline{\mathbf{d}}z\\
&=\sup_{x\in Y}\inf_{z\in Z}(\leq^\mathbf{d}x)\mathbf{d}z\\
&=\sup_{x\in Y}((\leq^\mathbf{d}x)\mathbf{d})Z\\
&=\sup_{x\in Y}((\leq^\mathbf{d}x)\mathbf{d}^\mathcal{H}Z-(\leq^\mathbf{d}x)\mathbf{d}^\mathcal{H}Y).
\end{align*}
As $(\leq^\mathbf{d}x)\in\mathcal{P}^\mathbf{d}(X)$, this shows that
\[\mathbf{d}_\mathcal{H}|_{\mathcal{P}^\mathbf{d}(X)}\leq\underline{\mathbf{d}^\mathcal{H}|_{\mathcal{P}^\mathbf{d}(X)}}\leq\underline{\mathbf{d}^\mathcal{H}}|_{\mathcal{P}^\mathbf{d}(X)}\leq\mathbf{d}_\mathcal{H}|_{\mathcal{P}^\mathbf{d}(X)}.\]
\end{itemize}

As $X$ is $\mathbf{d}$-$\max$-continuous, $(\leq^\mathbf{d}x)$ is $\mathbf{d}$-directed with $\mathbf{d}$-maximum $x$, for all $x\in X$.  Thus if $Y\in\mathcal{P}^\mathbf{d}(X)$ then $\mathcal{Y}=\{(\leq^\mathbf{d}y):y\in Y\}$ is $\mathbf{d}^\mathcal{H}$-directed.  Indeed if $G\in\mathcal{F}(Y)$ let $\mathcal{G}=\{(\leq^\mathbf{d}y):y\in G\}$ so
\begin{align*}
(\mathcal{G}\mathbf{d}^\mathcal{H})\mathcal{Y}&=\inf_{y\in Y}\sup_{z\in G}(\leq^\mathbf{d}z)\mathbf{d}^\mathcal{H}(\leq^\mathbf{d}y)=\inf_{y\in Y}\sup_{z\in G}((\leq^\mathbf{d}z)\mathbf{d})(\leq^\mathbf{d}y)\\
&\leq\inf_{y\in Y}\sup_{z\in G}z\mathbf{d}(\leq^\mathbf{d}y)=\inf_{y\in Y}\sup_{z\in G}z\mathbf{d}y=(G\mathbf{d})Y=0,
\end{align*}
as $Y$ is $\mathbf{d}$-directed.  For all $y\in Y$, $(\leq^\mathbf{d}y)\mathbf{d}^\mathcal{H}Y=((\leq^\mathbf{d}y)\mathbf{d})Y\leq(\leq^\mathbf{d}y)\mathbf{d}y=0$, i.e. $(\leq^\mathbf{d}y)\leq^{\mathbf{d}^\mathcal{H}}Y$.  Moreover, for all $Z\in\mathcal{P}^\mathbf{d}(X)$,
\begin{align*}
Z\mathbf{d}^\mathcal{H}\mathcal{Y}&=\inf_{y\in Y}Z\mathbf{d}^\mathcal{H}(\leq^\mathbf{d}y)=\inf_{y\in Y}(Z\mathbf{d})(\leq^\mathbf{d}y)=\inf_{x\leq^\mathbf{d}y\in Y}\sup_{z\in Z}z\mathbf{d}x\\
&\leq\inf_{w\in Y}\inf_{x\leq^\mathbf{d}y\in Y}\sup_{z\in Z}(z\mathbf{d}w+w\mathbf{d}x)=\inf_{w\in Y}\inf_{y\in Y}(Z\mathbf{d}w+w\mathbf{d}(\leq^\mathbf{d}y))\\
&=\inf_{w\in Y}\inf_{y\in Y}(Z\mathbf{d}w+w\mathbf{d}y)=\inf_{w\in Y}(Z\mathbf{d}w+w\mathbf{d}Y)\\
&\leq(Z\mathbf{d})Y+Y(\mathbf{d}Y)=(Z\mathbf{d})Y\\
&=Z\mathbf{d}^\mathcal{H}Y,
\end{align*}
i.e. $\mathbf{d}^\mathcal{H}\mathcal{Y}\leq\mathbf{d}^\mathcal{H}Y$ so $Y=\mathbf{d}^\mathcal{H}$-$\max\mathcal{Y}$.  Thus $\mathcal{P}^\mathbf{d}(X)$ is $\mathbf{d}^\mathcal{H}$-$\max$-continuous with $\mathbf{d}^\mathcal{H}$-$\max$-basis $\{(\leq^\mathbf{d}x):x\in X\}$.

If $\mathcal{Y}\subseteq\mathcal{P}^\mathbf{d}(X)$ is $\mathbf{d}^\mathcal{H}$-directed (or just $\mathbf{d}_\mathcal{H}$-directed) then $\bigcup\mathcal{Y}\in\mathcal{P}^\mathbf{d}(X)$, so $\mathcal{P}^\mathbf{d}(X)$ is $\mathbf{d}^\mathcal{H}$-$\max$-complete, as in \autoref{HausdorffProp}.  By \eqref{de^H}, $\mathbf{d}^\mathcal{H}\leq\mathbf{d}_\mathcal{H}\circ\mathbf{d}^\mathcal{H}$ so $\overline{\mathbf{d}^\mathcal{H}}\leq\mathbf{d}_\mathcal{H}$ and hence, by \eqref{dH=dH|},
\[\overline{\mathbf{d}^\mathcal{H}|_{\mathcal{P}^\mathbf{d}(X)}}\leq\overline{\mathbf{d}^\mathcal{H}}|_{\mathcal{P}^\mathbf{d}(X)}\leq\mathbf{d}_\mathcal{H}|_{\mathcal{P}^\mathbf{d}(X)}=\underline{\mathbf{d}^\mathcal{H}|_{\mathcal{P}^\mathbf{d}(X)}}.\]
Thus $\mathcal{P}^\mathbf{d}(X)$ is a $\mathbf{d}^\mathcal{H}$-$\max$-domain.

\begin{itemize}
\item[\eqref{dHxdy}]  As $X$ is $\mathbf{d}$-$\max$-continuous, for any $x,y\in X$, $y=\mathbf{d}$-$\max(\leq^\mathbf{d}y)$ so
\[(\leq^\mathbf{d}x)\mathbf{d}^\mathcal{H}(\leq^\mathbf{d}y)=((\leq^\mathbf{d}x)\mathbf{d})(\leq^\mathbf{d}y)\leq x\mathbf{d}(\leq^\mathbf{d}y)=x\mathbf{d}y.\qedhere\]
\end{itemize}
\end{proof}

\begin{cor}\label{pdcomp}
The following are equivalent.
\begin{enumerate}
\item\label{pdcompp} X is a $\mathbf{d}$-$\max$-predomain.
\item\label{pdcompb} X is a $\mathbf{d}'$-$\max$-basis of a $\mathbf{d}'$-$\max$-domain $X'\supseteq X$ with $\mathbf{d}'|_X=\mathbf{d}$.
\end{enumerate}
\end{cor}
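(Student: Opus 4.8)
The plan is to prove the two implications separately. The implication \eqref{pdcompb}$\Rightarrow$\eqref{pdcompp} should be a routine ``hereditary'' argument, whereas \eqref{pdcompp}$\Rightarrow$\eqref{pdcompb} is the substantial direction and will be obtained from the completion $\mathcal{P}^\mathbf{d}(X)$ of \autoref{predomaincompletion}.

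For \eqref{pdcompb}$\Rightarrow$\eqref{pdcompp}, suppose $X\subseteq X'$ with $X'$ a $\mathbf{d}'$-$\max$-domain, $\mathbf{d}'|_X=\mathbf{d}$, and $X$ a $\mathbf{d}'$-$\max$-basis of $X'$. For $\mathbf{d}$-$\max$-continuity of $X$: each $x\in X\subseteq X'$ is $\mathbf{d}'$-$\max Y$ for some $\mathbf{d}'$-directed $Y\subseteq X$; since $\mathbf{d}'|_X=\mathbf{d}$, this $Y$ is $\mathbf{d}$-directed and the conditions $Y\leq^{\mathbf{d}'}x$, $\mathbf{d}'Y\leq\mathbf{d}'x$ restrict over $X$ to $Y\leq^\mathbf{d}x$, $\mathbf{d}Y\leq\mathbf{d}x$, so $x=\mathbf{d}$-$\max Y$ in $X$. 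For $\overline{\mathbf{d}}\leq\underline{\mathbf{d}}$: as $X'$ is $\mathbf{d}'$-$\max$-continuous, $X$ being a $\mathbf{d}'$-$\max$-basis amounts to $\mathbf{d}'\circ X\circ\leq^{\mathbf{d}'}\precapprox\mathbf{d}'$, whence $\mathbf{d}'\circ X\circ\underline{\mathbf{d}'}\precapprox\mathbf{d}'$ and $\overline{\mathbf{d}'}\circ X\circ\mathbf{d}'\precapprox\mathbf{d}'$ (using $\underline{\mathbf{d}'},\overline{\mathbf{d}'}\leq\mathbf{d}'\leq\,\leq^{\mathbf{d}'}$ as generalised relations), and then the argument of \autoref{reflexbasis} yields $\overline{\mathbf{d}'|_X}=\overline{\mathbf{d}'}|_X$ and $\underline{\mathbf{d}'|_X}=\underline{\mathbf{d}'}|_X$; since $\mathbf{d}'|_X=\mathbf{d}$ and $\overline{\mathbf{d}'}\leq\underline{\mathbf{d}'}$ on $X'$, we get $\overline{\mathbf{d}}=\overline{\mathbf{d}'}|_X\leq\underline{\mathbf{d}'}|_X=\underline{\mathbf{d}}$.

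For \eqref{pdcompp}$\Rightarrow$\eqref{pdcompb}, assume $X$ is a $\mathbf{d}$-$\max$-predomain. By \autoref{predomaincompletion}, $\mathcal{P}^\mathbf{d}(X)$ is a $\mathbf{d}^\mathcal{H}$-$\max$-domain with $\mathbf{d}^\mathcal{H}$-$\max$-basis $B=\{(\leq^\mathbf{d}x):x\in X\}$ satisfying \eqref{dHxdy}. I will realise $(X,\mathbf{d})$ as a copy of $B$ sitting inside $\mathcal{P}^\mathbf{d}(X)$: put $X'=X\sqcup(\mathcal{P}^\mathbf{d}(X)\setminus B)$, let $p\colon X'\to\mathcal{P}^\mathbf{d}(X)$ send $x\mapsto(\leq^\mathbf{d}x)$ on $X$ and act as the identity on $\mathcal{P}^\mathbf{d}(X)\setminus B$, and pull the distance back, $\mathbf{d}'=\mathbf{d}^\mathcal{H}\circ(p\times p)$. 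As a pullback of a distance along a surjection, $\mathbf{d}'$ is a distance; moreover $\overline{\mathbf{d}'}$ and $\underline{\mathbf{d}'}$ are the $p$-pullbacks of $\overline{\mathbf{d}^\mathcal{H}}$ and $\underline{\mathbf{d}^\mathcal{H}}$, and $\mathbf{d}'$-directedness and $\mathbf{d}'$-maxima of subsets of $X'$ correspond via $p$ to the $\mathbf{d}^\mathcal{H}$-notions for their images; hence $X'$ is a $\mathbf{d}'$-$\max$-domain and $X=p^{-1}(B)$ is a $\mathbf{d}'$-$\max$-basis of $X'$. It then remains only to verify $\mathbf{d}'|_X=\mathbf{d}$, that is, that \eqref{dHxdy} is an \emph{equality}, $(\leq^\mathbf{d}x)\mathbf{d}^\mathcal{H}(\leq^\mathbf{d}y)=x\mathbf{d}y$.

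This equality is the crux and the step I expect to be the main obstacle; it is exactly here that the predomain hypothesis $\overline{\mathbf{d}}\leq\underline{\mathbf{d}}$ must be used (for merely $\mathbf{d}$-$\max$-continuous $X$ the inequality \eqref{dHxdy} can be strict). Recall $(\leq^\mathbf{d}x)\mathbf{d}^\mathcal{H}(\leq^\mathbf{d}y)=\inf_{z\leq^\mathbf{d}y}\sup_{w\leq^\mathbf{d}x}w\mathbf{d}z$, and that, as in the proof of \autoref{predomaincompletion}, $\mathbf{d}$-$\max$-continuity already gives $x\mathbf{d}(\leq^\mathbf{d}y)=x\mathbf{d}y$ and $(\leq^\mathbf{d}x)\mathbf{d}=x\underline{\mathbf{d}}$; combined with \eqref{dH} this shows $(\leq^\mathbf{d}x)\mathbf{d}^\mathcal{H}(\leq^\mathbf{d}y)$ lies between $(\leq^\mathbf{d}x)\mathbf{d}_\mathcal{H}(\leq^\mathbf{d}y)=x\underline{\mathbf{d}}y$ and $x\mathbf{d}y$, so only the reverse bound $\inf_{z\leq^\mathbf{d}y}\sup_{w\leq^\mathbf{d}x}w\mathbf{d}z\geq x\mathbf{d}y$ needs proof. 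For this I would exploit $x=\mathbf{d}$-$\max(\leq^\mathbf{d}x)=\underline{\mathbf{d}}$-$\sup(\leq^\mathbf{d}x)$ together with $\overline{\mathbf{d}}\leq\underline{\mathbf{d}}$ to produce, for each $z\leq^\mathbf{d}y$ and $\epsilon>0$, some $w\leq^\mathbf{d}x$ with $w\mathbf{d}z>x\mathbf{d}z-\epsilon\ (\geq x\mathbf{d}y-\epsilon)$ — in effect, that $\overline{\mathbf{d}}\leq\underline{\mathbf{d}}$ forces $(\leq^\mathbf{d}x)\mathbf{d}z=x\mathbf{d}z$ for points $z$ lying below some element of $X$. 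Once this equality is established, both implications are complete.
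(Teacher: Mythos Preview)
Your overall strategy matches the paper's exactly: for \eqref{pdcompb}$\Rightarrow$\eqref{pdcompp} you invoke \autoref{reflexbasis} (adapted to $\max$-bases, which is fine since a $\mathbf{d}'$-$\max$-basis satisfies $\mathbf{d}'\circ B\circ\mathbf{d}'\leq\mathbf{d}'$), and for \eqref{pdcompp}$\Rightarrow$\eqref{pdcompb} you embed $X$ into $\mathcal{P}^\mathbf{d}(X)$ via $x\mapsto(\leq^\mathbf{d}x)$ and reduce everything to showing equality in \eqref{dHxdy}. That is precisely what the paper does.

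However, your sketch for the reverse inequality has a genuine gap. You propose to find, for each $z\leq^\mathbf{d}y$, some $w\leq^\mathbf{d}x$ with $w\mathbf{d}z>x\mathbf{d}z-\epsilon$; equivalently, you want $(\leq^\mathbf{d}x)\mathbf{d}z\geq x\mathbf{d}z$. But you already noted $(\leq^\mathbf{d}x)\mathbf{d}=x\underline{\mathbf{d}}$, so this would require $x\underline{\mathbf{d}}z\geq x\mathbf{d}z$, which is false in general (indeed $\underline{\mathbf{d}}\leq\mathbf{d}$, with strict inequality whenever $\mathbf{d}$ is not a hemimetric). The predomain hypothesis $\overline{\mathbf{d}}\leq\underline{\mathbf{d}}$ does \emph{not} force $(\leq^\mathbf{d}x)\mathbf{d}z=x\mathbf{d}z$ pointwise.

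The fix is to aim for the weaker (and sufficient) bound $x\mathbf{d}y$ rather than $x\mathbf{d}z$. This is exactly the paper's chain \eqref{dHxdy=}: for each $z\leq^\mathbf{d}y$,
\[
(\leq^\mathbf{d}x)\mathbf{d}z\;\geq\;(\leq^\mathbf{d}x)\underline{\mathbf{d}}z\;=\;x\underline{\mathbf{d}}z\;\geq\;x\overline{\mathbf{d}}z\;\geq\;(x\mathbf{d}y-z\mathbf{d}y)_+\;=\;x\mathbf{d}y,
\]
using $\overline{\mathbf{d}}\leq\underline{\mathbf{d}}$ in the middle and the definition of $\overline{\mathbf{d}}$ together with $z\mathbf{d}y=0$ at the end. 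Taking the infimum over $z\leq^\mathbf{d}y$ then gives $(\leq^\mathbf{d}x)\mathbf{d}^\mathcal{H}(\leq^\mathbf{d}y)\geq x\mathbf{d}y$. All the ingredients you listed are the right ones; only the intermediate target $x\mathbf{d}z$ needs to be replaced by $x\mathbf{d}y$.
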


\begin{proof}\
\begin{itemize}
\item[\eqref{pdcompp}$\Rightarrow$\eqref{pdcompb}]  Assume \eqref{pdcompp} and let $X'$ be the (disjoint) union of $X$ and $\mathcal{P}^\mathbf{d}(X)$.  Extend $\mathbf{d}^\mathcal{H}$ to $\mathbf{d}'$ on $X'$ by making each $x\in X$ $\mathbf{d}'$-equivalent to $(\leq^\mathbf{d}x)$.  By \autoref{predomaincompletion}, the only thing left to show is that the inequality in \eqref{dHxdy} is an equality.  For this note that, for any $x,y\in X$, $\overline{\mathbf{d}}\leq\underline{\mathbf{d}}$ implies
\begin{align}
\label{dHxdy=}(\leq^\mathbf{d}x)\mathbf{d}^\mathcal{H}(\leq^\mathbf{d}y)&\geq(\leq^\mathbf{d}x)\underline{\mathbf{d}}^\mathcal{H}(\leq^\mathbf{d}y)=((\leq^\mathbf{d}x)\underline{\mathbf{d}})(\leq^\mathbf{d}y)=x\underline{\mathbf{d}}(\leq^\mathbf{d}y)\\
\nonumber&\geq x\overline{\mathbf{d}}(\leq^\mathbf{d}y)=x\mathbf{d}(\leq^\mathbf{d}y)=x\mathbf{d}y.
\end{align}

\item[\eqref{pdcompb}$\Rightarrow$\eqref{pdcompp}]  If $X\subseteq X'$ is a $\mathbf{d}'$-$\max$-basis and $\mathbf{d}=\mathbf{d}'|_X$ then $X$ is certainly $\mathbf{d}$-$\max$-continuous.  If $X'$ is also a $\mathbf{d}'$-$\max$-(pre)domain then $\overline{\mathbf{d}}=\overline{\mathbf{d}'}|_X\leq\underline{\mathbf{d}'}|_X=\underline{\mathbf{d}}$, by \autoref{reflexbasis}, i.e. $X$ is a $\mathbf{d}$-$\max$-predomain.\qedhere
\end{itemize}
\end{proof}

In other words, \eqref{pdcompp}$\Rightarrow$\eqref{pdcompb} above says every $\mathbf{d}$-predomain $X$ has a completion $X'$.  If we want to identify $\mathbf{d}'$-equivalent points, we can restrict $\mathbf{d}^\mathcal{H}$ further to $\mathbf{d}$-ideals (i.e. $\overline{\mathbf{d}}^\bullet$-closed $\mathbf{d}$-directed subsets \textendash\, see \cite[Proposition 9.10]{Bice2019a}) denoted by
\[\mathcal{I}^\mathbf{d}(X)=\{I\subseteq X:I\text{ is a $\mathbf{d}$-ideal}\}.\]

\begin{thm}\label{predomainuniversality}
If $B$ is a $\mathbf{d}$-$\max$-basis of $\mathbf{d}$-$\max$-predomain $X$ then
\begin{equation}\label{isocomp}
x\mapsto(\leq^\mathbf{d}x)\cap B
\end{equation}
is an isometry (w.r.t. $\mathbf{d}$ on $X$ and $\mathbf{d}^\mathcal{H}$ on $\mathcal{I}^\mathbf{d}(X)$) to the $\mathbf{d}^\mathcal{H}$-$\max$-domain $\mathcal{I}^\mathbf{d}(B)$.  Moreover, this isometry is onto $\mathcal{I}^\mathbf{d}(B)$ iff $X$ is a $\mathbf{d}$-$\max$-domain.
\end{thm}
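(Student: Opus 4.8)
The plan is to verify three things in turn: that the map \eqref{isocomp} lands in $\mathcal{I}^\mathbf{d}(B)$, that it is isometric, and that its range is exactly $\mathcal{I}^\mathbf{d}(B)$ precisely when $X$ is a $\mathbf{d}$-$\max$-domain. For the first point, fix $x\in X$ and write $I_x=(\leq^\mathbf{d}x)\cap B$. Since $B$ is a $\mathbf{d}$-$\max$-basis, $x$ is the $\mathbf{d}$-$\max$ of some $\mathbf{d}$-directed $Y\subseteq B$, and $Y\leq^\mathbf{d}x$ forces $Y\subseteq I_x$; conversely any finite $F\in\mathcal{F}(I_x)$ satisfies $F\leq^\mathbf{d}x$, and using $\mathbf{d}$-$\max$-continuity of $X$ (via the interpolation $\mathcal{F}\mathbf{d}\circ\leq^\mathbf{d}\,\leq\mathcal{F}\mathbf{d}$ from \autoref{maxctsequiv}\eqref{Fd<d<Fd}) together with the basis property one interpolates an element of $B$ below $x$ and above $F$, giving $\mathbf{d}$-directedness of $I_x$. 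That $I_x$ is $\overline{\mathbf{d}}^\bullet$-closed in $B$ (hence a $\mathbf{d}$-ideal of $B$) follows because $b\mathbf{d}x=0$ is preserved under $\overline{\mathbf{d}}$-limits, using $\overline{\mathbf{d}}\leq\underline{\mathbf{d}}$ and \cite[Proposition 9.10]{Bice2019a}.

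For isometry, the computation is essentially the argument in \eqref{dHxdy} and \eqref{dHxdy=} carried out inside $B$ rather than $X$. On one side, $\mathbf{d}$-$\max$-continuity gives $I_x\mathbf{d}^\mathcal{H}I_y=(I_x\mathbf{d})I_y\leq x\mathbf{d}(\text{an element of }I_y\text{ approximating }y)$, and since $y$ is the $\mathbf{d}$-$\max$ of $I_y$ this is at most $x\mathbf{d}y$; here one uses that $\mathbf{d}^\mathcal{H}$ restricted to $\mathcal{P}^\mathbf{d}(B)$ agrees with $\mathbf{d}_\mathcal{H}$ there, by \eqref{dH=dH|} applied to $B$ (legitimate since $B$ is itself $\mathbf{d}$-$\max$-continuous). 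On the other side, $\overline{\mathbf{d}}\leq\underline{\mathbf{d}}$ on $X$ and hence on $B$ (by \autoref{reflexbasis}, $\overline{\mathbf{d}|_B}=\overline{\mathbf{d}}|_B$ and $\underline{\mathbf{d}|_B}=\underline{\mathbf{d}}|_B$, noting $B$ is in particular a $\mathbf{d}^\bullet_\circ$-basis) let us run $I_x\mathbf{d}^\mathcal{H}I_y\geq I_x\underline{\mathbf{d}}^\mathcal{H}I_y=(I_x\underline{\mathbf{d}})I_y$, and since $x=\underline{\mathbf{d}}$-$\sup I_x$ and $y=\underline{\mathbf{d}}$-$\sup I_y$ (by \cite[(9.2)]{Bice2019a} and the fact that $\mathbf{d}$-$\max$ gives $\underline{\mathbf{d}}$-$\sup$, \cite[(10.4)]{Bice2019a}), this recovers $x\underline{\mathbf{d}}y\geq x\overline{\mathbf{d}}y=x\mathbf{d}y$ using $\overline{\mathbf{d}}\leq\underline{\mathbf{d}}$. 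Combining the two inequalities yields $I_x\mathbf{d}^\mathcal{H}I_y=x\mathbf{d}y$, i.e.\ isometry, where I am using the implicit identification of a point of $X$ with its image and symmetrically reading off $\mathbf{d}^\mathcal{H}$ in both directions. That $\mathcal{I}^\mathbf{d}(B)$ is a $\mathbf{d}^\mathcal{H}$-$\max$-domain follows from \autoref{predomaincompletion} applied with $B$ in place of $X$, restricting further from $\mathcal{P}^\mathbf{d}(B)$ to $\mathcal{I}^\mathbf{d}(B)$ (the $\overline{\mathbf{d}}^\bullet$-closure map $Y\mapsto\overline{Y}$ being a $\mathbf{d}^\mathcal{H}$-isometric retraction that preserves $\mathbf{d}$-directedness and the relevant suprema).

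For surjectivity, first suppose $X$ is a $\mathbf{d}$-$\max$-domain. Given $I\in\mathcal{I}^\mathbf{d}(B)$, it is $\mathbf{d}$-directed in $X$, so by $\mathbf{d}$-$\max$-completeness it has a $\mathbf{d}$-$\max$ $x\in X$; one then checks $(\leq^\mathbf{d}x)\cap B=I$ — the inclusion $\supseteq$ is immediate from $I\leq^\mathbf{d}x$, and $\subseteq$ uses that $I$ is $\overline{\mathbf{d}}^\bullet$-closed in $B$ together with $x=\underline{\mathbf{d}}$-$\sup I$ and $\overline{\mathbf{d}}\leq\underline{\mathbf{d}}$: any $b\in B$ with $b\mathbf{d}x=0$ is an $\overline{\mathbf{d}}$-limit of points of $I$ (approximating $x$ from within $I$ and using $\mathbf{d}$-$\max$-continuity), hence lies in $I$. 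Conversely, if the map is onto $\mathcal{I}^\mathbf{d}(B)$, then every $\mathbf{d}$-directed $Y\subseteq X$ has a $\mathbf{d}$-$\max$: pass to $\overline{Y\cap B}\in\mathcal{I}^\mathbf{d}(B)$ (nonempty and directed since $B$ is a basis — this is the delicate spot), realize it as $(\leq^\mathbf{d}x)\cap B$ for some $x$, and verify $x=\mathbf{d}$-$\max Y$ by comparing $Y$ with $Y\cap B$ via the basis interpolation. The main obstacle is this last verification: showing that replacing a $\mathbf{d}$-directed subset of $X$ by its trace on $B$ does not change the $\mathbf{d}$-$\max$ (equivalently that $\mathbf{d}Y=\mathbf{d}(Y\cap B)$ and $Y\equiv^\mathbf{d}Y\cap B$ in the appropriate sense), which requires carefully invoking $\mathbf{d}$-$\max$-continuity of $X$ and the defining property of a $\mathbf{d}$-$\max$-basis to interpolate basis elements cofinally below points of $Y$; the directedness and final-ness bookkeeping there mirrors the end of the proof of \autoref{maxctsequiv}\eqref{Fd<d<approxFd}$\Rightarrow$\eqref{dbardirected} and of \autoref{predomaincompletion}, and I would model the estimates on those.
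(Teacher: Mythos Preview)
Your proposal follows essentially the same route as the paper, just with more detail spelled out.  The paper's proof is very terse: it records that $(\leq^\mathbf{d}x)\cap B\in\mathcal{I}^\mathbf{d}(B)$ since $B$ is a basis, that every $Y\in\mathcal{P}^\mathbf{d}(B)$ is $\mathbf{d}^\mathcal{H}$-equivalent to its closure $I_Y$ (so $\mathcal{I}^\mathbf{d}(B)$ inherits the domain structure from \autoref{predomaincompletion}), and that isometry follows from the computations already done in \eqref{dHxdy} and \eqref{dHxdy=}.  Your elaboration of these points is correct.

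There is, however, a concrete gap in your backward direction of surjectivity.  You propose to pass from $\mathbf{d}$-directed $Y\subseteq X$ to $\overline{Y\cap B}$, but $Y\cap B$ may be empty: nothing forces a $\mathbf{d}$-directed subset of $X$ to meet the basis $B$ at all.  The ``delicate spot'' you flag is not merely directedness bookkeeping but nonemptiness, and your proposed fix via $Y\cap B$ cannot work as stated.  The paper avoids this by working instead with $I_Y\cap B$, where $I_Y=\overline{\mathbf{d}}^\bullet$-$\mathrm{cl}(Y)$ is the closure taken in $X$; this always contains $\bigcup_{y\in Y}((\leq^\mathbf{d}y)\cap B)$, which is nonempty and $\mathbf{d}$-directed by the basis property.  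More to the point, the paper condenses both directions of ``onto iff complete'' into a single equivalence chain, valid for any $Y\in\mathcal{P}^\mathbf{d}(X)$ (using that $B$ is a $\mathbf{d}$-$\max$-basis):
\[x=\text{$\mathbf{d}$-$\max$ }Y\quad\Leftrightarrow\quad\mathbf{d}x=\mathbf{d}Y=\mathbf{d}I_Y\quad\Leftrightarrow\quad(\leq^\mathbf{d}x)\cap B=I_Y\cap B.\]
This makes the biconditional immediate without separate forward and backward arguments, and sidesteps the trace-on-$B$ problem entirely.
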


\begin{proof}
As $B$ is a $\mathbf{d}$-$\max$-basis, $(\leq^\mathbf{d}x)\cap B\in\mathcal{I}^\mathbf{d}(B)$, for all $x\in X$.  Every $Y\in\mathcal{P}^\mathbf{d}(B)$ is $\mathbf{d}^\mathcal{H}$-equivalent to $I_Y=\overline{\mathbf{d}}^\bullet\!\!$-$\mathrm{cl}(Y)\in\mathcal{I}^\mathbf{d}(B)$ so $\mathcal{I}^\mathbf{d}(B)$ is also a $\mathbf{d}^\mathcal{H}$-$\max$-domain and
\[((\leq^\mathbf{d}x)\cap B)\mathbf{d}^\mathcal{H}((\leq^\mathbf{d}y)\cap B)\ =\ x\mathbf{d}y,\]
i.e. \eqref{isocomp} is an isometry.  Also, as $B$ is a $\mathbf{d}$-$\max$-basis, for $Y\in\mathcal{P}^\mathbf{d}(X)$,
\[\text{$x=\mathbf{d}$-$\max Y$}\quad\Leftrightarrow\quad\mathbf{d}x=\mathbf{d}Y=\mathbf{d}I_Y\quad\Leftrightarrow\quad(\leq^\mathbf{d}x)\cap B=I_Y\cap B,\]
so \eqref{isocomp} is onto iff $X$ is $\mathbf{d}$-$\max$-complete and hence a $\mathbf{d}$-$\max$-domain.
\end{proof}

In other words $\mathcal{I}^\mathbf{d}(B)$ is universal among $\mathbf{d}$-$\max$-predomain extensions of $B$, and unique among $\mathbf{d}$-$\max$-domain extensions, up to isometry (and $\mathbf{d}$-equivalence).

At this point we could develop a parallel theory of Hausdorff distances on nets $\mathsf{N}(X)$ on $X$, specifically we could define
\begin{align*}
(y_\lambda)\mathbf{d}^\mathsf{H}(z_\gamma)&=((y_\lambda)\mathbf{d})(z_\gamma)=\liminf_\gamma\limsup_\lambda y_\lambda\mathbf{d}z_\gamma.\\
(y_\lambda)\mathbf{d}_\mathsf{H}(z_\gamma)&=(y_\lambda)(\mathbf{d}(z_\gamma))=\limsup_\lambda\liminf_\gamma y_\lambda \mathbf{d}z_\gamma.
\end{align*}
The analog of \autoref{Hausfunc} would be no problem, but completeness and continuity would involve nets of nets, which are technically challenging to work with.  Instead, to get topological analogs of the above results, we turn to formal balls.

\section{Formal Balls}\label{FormalBalls}\label{FB}

The following is based on \cite[Definition 7.3.1]{Goubault2013}, although the formal ball construction goes back to \cite{WeihrauchSchreiber1981}.

\begin{dfn}
Define $\mathbf{d}_+$ on $X_+=X\times[0,\infty)$ by
\[(x,r)\mathbf{d}_+(y,s)=(x\mathbf{d}y-r+s)_+.\]
\end{dfn}

This does not quite extend to a functor on $\mathbf{GRel}$, as $_+$ does not preserve identity morphisms.  Indeed, recall that we identify $=$ with its characteristic function, so
\[(x,r)\!=_+\!(y,s)\ =\ \begin{cases}(s-r)_+&\text{if }x=y\\ \infty &\text{if }x\neq y,\end{cases}\]
which is not (the characteristic function of) $=$ on $X_+$.  However, $_+$ does preserve composition.  In particular, this means $\mathbf{d}_+$ is a distance whenever $\mathbf{d}$ is.

\begin{prp}\label{Bfunc}
\[(\mathbf{d}\circ\mathbf{e})_+=\mathbf{d}_+\circ\mathbf{e}_+.\]
\end{prp}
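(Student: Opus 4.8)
The plan is to unwind both sides pointwise and reduce the claim to a one-variable optimisation in the radius coordinate. Fix $(x,r),(y,s)\in X_+$. The left-hand side is just
\[
(x,r)(\mathbf{d}\circ\mathbf{e})_+(y,s)=\Bigl(\inf_{z\in X}(x\mathbf{d}z+z\mathbf{e}y)-r+s\Bigr)_+,
\]
while, unwinding the definition of composition in $\mathbf{GRel}$ together with that of $\mathbf{d}_+$,
\[
(x,r)(\mathbf{d}_+\circ\mathbf{e}_+)(y,s)=\inf_{z\in X}\ \inf_{t\ge0}\Bigl((x\mathbf{d}z-r+t)_++(z\mathbf{e}y-t+s)_+\Bigr).
\]
So the whole statement reduces to evaluating the inner infimum over $t$ and then moving a truncation past the infimum over $z$.

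The first step is the elementary identity: for every $a\in(-\infty,\infty]$ and $b\in[0,\infty]$,
\[
\inf_{t\ge0}\bigl((a+t)_++(b-t)_+\bigr)=(a+b)_+.
\]
One inequality is immediate from $u_+\ge u$ and $u_+\ge0$, which give $(a+t)_++(b-t)_+\ge(a+b)\vee0=(a+b)_+$ for every $t\ge0$. For the reverse inequality one exhibits an explicit minimiser by cases: take $t=0$ when $a\ge0$, take $t=-a$ when $-b\le a<0$ (then $a+t=0$ and $b-t=a+b\ge0$), and take $t=b$ when $a<-b$ (then both summands vanish while $(a+b)_+=0$); the cases $a=\infty$ or $b=\infty$ are trivial since both sides are then $\infty$. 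Applying this identity with $a=x\mathbf{d}z-r$ (note $a\ge-r>-\infty$) and $b=z\mathbf{e}y+s\ge0$ collapses the inner infimum, so that
\[
(x,r)(\mathbf{d}_+\circ\mathbf{e}_+)(y,s)=\inf_{z\in X}\bigl(x\mathbf{d}z+z\mathbf{e}y-r+s\bigr)_+.
\]

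It then remains to interchange $\inf_z$ with the truncation, i.e. to verify $\inf_{z\in X}(c_z+s-r)_+=\bigl(\inf_{z\in X}c_z+s-r\bigr)_+$ where $c_z=x\mathbf{d}z+z\mathbf{e}y\in[0,\infty]$. This holds because $u\mapsto(u+s-r)_+$ is non-decreasing on $[0,\infty]$ and satisfies $(u+\epsilon)_+\le u_++\epsilon$, so a routine $\epsilon$-argument gives the equality. The right-hand side of the last display then equals $\bigl(x(\mathbf{d}\circ\mathbf{e})y-r+s\bigr)_+=(x,r)(\mathbf{d}\circ\mathbf{e})_+(y,s)$, which is the desired identity. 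I do not expect a genuine obstacle: the only care needed is the bookkeeping with the truncations $(\cdot)_+$ and the value $\infty$, which is entirely absorbed into the case analysis above. The identical computation works for $\mathbf{d}\in[0,\infty]^{X\times Z}$ and $\mathbf{e}\in[0,\infty]^{Z\times Y}$, which is the precise sense in which $_+$ preserves composition in $\mathbf{GRel}$.
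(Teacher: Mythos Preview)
Your proof is correct and follows essentially the same route as the paper: both unwind the definitions, eliminate the inner infimum over $t$ by exhibiting an optimal radius, and then pass the infimum over $z$ through the truncation. The only cosmetic difference is that the paper uses the single uniform choice $t=z\mathbf{e}y+s$ (which in your notation is $t=b$, and indeed already gives $(a+b)_+$ in every case), whereas you split into three cases; your treatment is more explicit about the lower bound and the $\inf$/$(\cdot)_+$ interchange, which the paper leaves to the reader.
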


\begin{proof}
For $(\mathbf{d}\circ\mathbf{e})_+=\mathbf{d}_+\circ\mathbf{e}_+$, note
\begin{align*}
(x,r)(\mathbf{d}_+\circ\mathbf{e}_+)(y,s)&=\inf_{z\in X,t\in\mathbb{R}_+}(x,r)\mathbf{d}_+(z,t)+(z,t)\mathbf{e}_+(y,s).\\
&=\inf_{z\in X,t\in\mathbb{R}_+}(x\mathbf{d}z-r+t)_++(z\mathbf{e}y-t+s)_+.\\
&=\inf_{z\in X,z\mathbf{e}y<\infty,t=z\mathbf{e}y+s}(x\mathbf{d}z-r+t)_++(z\mathbf{e}y-t+s)_+.\\
&=\inf_{z\in X}(x\mathbf{d}z+z\mathbf{e}y-r+s)_+.\\
&=(x(\mathbf{d}\circ\mathbf{e})y-r+s)_+\\
&=(x,r)(\mathbf{d}\circ\mathbf{e})_+(y,s).\qedhere
\end{align*}
\end{proof}

As $\mathbf{d}\leq\mathbf{d}\circ\underline{\mathbf{d}}$ and $\mathbf{d}\leq\overline{\mathbf{d}}\circ\mathbf{d}$, it follows that $\mathbf{d}_+\leq\mathbf{d}_+\circ\underline{\mathbf{d}}_+$ and $\mathbf{d}_+\leq\overline{\mathbf{d}}_+\circ\mathbf{d}_+$ so
\[\underline{\mathbf{d}_+}\leq\underline{\mathbf{d}}_+\qquad\text{and}\qquad\overline{\mathbf{d}_+}\leq\overline{\mathbf{d}}_+.\]

However, the reverse inequality can fail, e.g. for the right projection distance $\mathbf{d}$ given at the end of \cite[\S8]{Bice2019a}.  Specifically, define $\mathbf{d}$ on $X=[0,\infty)$ by $y\mathbf{d}z=z$ so $X_+=[0,\infty)\times[0,\infty)$ and, for all $x,y,r,s,t\in[0,\infty)$ with $t\leq s$,
\[(x,r)\mathbf{d}_+(y,s)=(y-r+s)_+=(x,r)\mathbf{d}_+(y+t,s-t).\]
This means $\underline{\mathbf{d}_+}$ is not a quasimetric, as it identifies all pairs of the form $(y,s)$ and $(y+t,s-t)$.  However, $y\underline{\mathbf{d}}z=(z-y)_+$, which is just the opposite of the usual quasimetric on $[0,\infty)$ and hence $\underline{\mathbf{d}}_+$ is also a quasimetric \textendash\, see \cite[Exercise 7.3.7]{Goubault2013} \textendash\, so, in particular, $\underline{\mathbf{d}_+}\neq\underline{\mathbf{d}}_+$.

However, this example is very far from being $\mathbf{d}^\bullet_\circ$-continuous.  In fact, this anomaly disappears if $X$ is merely $\mathbf{d}$-initial, i.e. $\mathbf{0}\circ\mathbf{d}=\mathbf{0}$.

\begin{prp}
\[\mathbf{0}\circ\mathbf{d}=\mathbf{0}\qquad\Rightarrow\qquad\underline{\mathbf{d}_+}=\underline{\mathbf{d}}_+\qquad\Leftrightarrow\qquad\forall y,z\in X\ (\sup_{x\in X}(x\mathbf{d}y-x\mathbf{d}z)\geq0).\]
\end{prp}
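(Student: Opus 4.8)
The plan is to compute the hemimetric $\underline{\mathbf{d}_+}$ explicitly and then read off both assertions by comparison with $\underline{\mathbf{d}}_+$.

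First I would unwind the definition: for $(z,s),(y,t)\in X_+$,
\[(z,s)\underline{\mathbf{d}_+}(y,t)=\sup_{x\in X}\ \sup_{r\geq0}\ \big((x\mathbf{d}y-r+t)_+-(x\mathbf{d}z-r+s)_+\big)_+ .\]
The only genuine computation is the inner optimisation over the radius $r$: for fixed $x$, writing $a=x\mathbf{d}y$ and $b=x\mathbf{d}z$, the map $r\mapsto(a-r+t)_+-(b-r+s)_+$ is piecewise linear, equal to $a+t-b-s$ on $[0,b+s]$ and then monotonically decreasing to $0$ (treating the cases $a+t\geq b+s$ and $a+t<b+s$ separately, with $r=x\mathbf{d}z+s$ the witness in the first case), so its supremum — hence also that of its positive part — equals $(a-b+t-s)_+$. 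This yields the formula
\[(z,s)\underline{\mathbf{d}_+}(y,t)=\sup_{x\in X}(x\mathbf{d}y-x\mathbf{d}z+t-s)_+=\Big(\sup_{x\in X}(x\mathbf{d}y-x\mathbf{d}z)+t-s\Big)_+ ,\]
to be set against the immediate identity $(z,s)\underline{\mathbf{d}}_+(y,t)=(z\underline{\mathbf{d}}y+t-s)_+=\big(\sup_{x\in X}(x\mathbf{d}y-x\mathbf{d}z)_+ + t-s\big)_+$. Throughout, terms with $x\mathbf{d}z=\infty$ contribute nothing and may be discarded, so the usual $\infty-\infty$ conventions cause no trouble.

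Now write $N=N(y,z):=\sup_{x\in X}(x\mathbf{d}y-x\mathbf{d}z)$, so that $\sup_{x\in X}(x\mathbf{d}y-x\mathbf{d}z)_+=N_+$; then the two displays read $(N+t-s)_+$ and $(N_++t-s)_+$. These agree for all $s,t\geq0$ when $N\geq0$, whereas if $N<0$ then $N_+=0$ and the choice $s=0$, $t=1$ gives $(z,0)\underline{\mathbf{d}_+}(y,1)=(N+1)_+<1=(z,0)\underline{\mathbf{d}}_+(y,1)$. Hence $\underline{\mathbf{d}_+}=\underline{\mathbf{d}}_+$ precisely when $N(y,z)\geq0$ for all $y,z\in X$, which is the stated equivalence. (If one wishes to avoid invoking the explicit formula for $\underline{\mathbf{d}_+}$ in the failing case, it suffices to note $(x\mathbf{d}y-r+t)_+-(x\mathbf{d}z-r+s)_+\leq(x\mathbf{d}y-x\mathbf{d}z+t-s)\vee0$ for all $r\geq0$ — again by splitting on $r\leq x\mathbf{d}z+s$ — together with the general inequality $\underline{\mathbf{d}_+}\leq\underline{\mathbf{d}}_+$ already noted above for the other direction.)

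Finally, for the implication: $\mathbf{0}\circ\mathbf{d}=\mathbf{0}$ is exactly the statement that $X$ is $\mathbf{d}$-initial, i.e. $\inf_{w\in X}w\mathbf{d}z=0$ for every $z\in X$. So, given $y,z\in X$ and $\epsilon>0$, I would pick $w$ with $w\mathbf{d}z<\epsilon$; then $w\mathbf{d}y-w\mathbf{d}z\geq-\epsilon$ since $w\mathbf{d}y\geq0$, whence $N(y,z)\geq-\epsilon$, and letting $\epsilon\to0$ gives $N(y,z)\geq0$. By the equivalence just proved, $\underline{\mathbf{d}_+}=\underline{\mathbf{d}}_+$. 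The one non-routine step is the radius optimisation in the second paragraph (equivalently, the positive-part estimate in the parenthetical remark); everything else is bookkeeping.
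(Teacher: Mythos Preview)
Your proof is correct and follows essentially the same route as the paper: you compute $\underline{\mathbf{d}_+}$ explicitly by optimising over the radius (the paper simply asserts the optimum is at $r=x\mathbf{d}z+t$, whereas you justify this via the piecewise-linear analysis), compare with $\underline{\mathbf{d}}_+$, and then derive the first implication from $\mathbf{d}$-initiality in the same way. The only cosmetic differences are your swap of the radius labels $s,t$ and your choice of $t=1$ for the counterexample where the paper takes $s=-N$.
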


\begin{proof}
For any $y,z\in X$, if $\mathbf{0}\circ\mathbf{d}=\mathbf{0}$ then $\inf_{x\in X}x\mathbf{d}y$ and hence
\[\sup_{x\in X}(x\mathbf{d}y-x\mathbf{d}z)\geq\sup_{x\in X}(-x\mathbf{d}y)=-\inf_{x\in X}x\mathbf{d}y=0.\]
Thus it suffices to prove the last $\Leftrightarrow$.  For any $y,z\in X$ and $s,t\in[0,\infty)$,
\begin{align*}
(z,t)\underline{\mathbf{d}_+}(y,s)&=\sup_{x\in X,r\geq0}((x,r)\mathbf{d}_+(y,s)-(x,r)\mathbf{d}_+(z,t))_+\\
&=\sup_{x\in X,r\geq0}((x\mathbf{d}y-r+s)_+-(x\mathbf{d}z-r+t)_+)_+\\
&=\sup_{x\in X,r=x\mathbf{d}z+t}((x\mathbf{d}y-r+s)_+-(x\mathbf{d}z-r+t)_+)_+\\
&=(\sup_{x\in X}(x\mathbf{d}y-x\mathbf{d}z)-t+s)_+.
\end{align*}
On the other hand,
\[(z,t)\underline{\mathbf{d}}_+(y,s)=(z\underline{\mathbf{d}}y-t+s)_+=(\sup_{x\in X}(x\mathbf{d}y-x\mathbf{d}z)_+-t+s)_+.\]
So if $\sup_{x\in X}(x\mathbf{d}y-x\mathbf{d}z)\geq0$ then these two expressions coincide, otherwise taking $s=-\sup_{x\in X}(x\mathbf{d}y-x\mathbf{d}z)>0$ yields
\[(z,0)\underline{\mathbf{d}_+}(y,s)=0<s=(z,0)\underline{\mathbf{d}}_+(y,s).\qedhere\]
\end{proof}

Formal balls were originally introduced just as order structures $(X_+,\leq^{\mathbf{d}_+})$ with the primary purpose of reducing metric theory to order theory.  Indeed, we can always recover $\mathbf{d}$ from the preorder $\leq^{\mathbf{d}_+}$ or even the strict order $<^{\mathbf{d}_+}$ (see \eqref{StrictOrder} above) so this reduction is always possible, at least in principle.

\begin{prp}\label{xdy}
For any $x,y\in X$,
\begin{align*}
x\mathbf{d}y&=\min\{r\in\mathbb{R}_+:(x,r)\leq^{\mathbf{d}_+}(y,0)\}\\
&=\inf\{r\in\mathbb{R}_+:(x,r)<^{\mathbf{d}_+}(y,0)\}\qquad\text{if }\underline{\mathbf{d}_+}=\underline{\mathbf{d}}_+.
\end{align*}
\end{prp}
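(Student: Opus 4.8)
The plan is to unwind the two relations $\leq^{\mathbf{d}_+}$ and $<^{\mathbf{d}_+}$ directly. The first equality is immediate: $(x,r)\leq^{\mathbf{d}_+}(y,0)$ says $(x\mathbf{d}y-r)_+=0$, i.e. $x\mathbf{d}y\le r$, so the set $\{r\in\mathbb{R}_+:(x,r)\leq^{\mathbf{d}_+}(y,0)\}$ is $[x\mathbf{d}y,\infty)$ when $x\mathbf{d}y<\infty$ (note $x\mathbf{d}y\ge0$ automatically) and empty when $x\mathbf{d}y=\infty$; either way its minimum equals $x\mathbf{d}y$, using $\min\emptyset=\infty$.

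For the second equality I would argue by two inequalities. First, $<^{\mathbf{d}_+}\,\subseteq\,\leq^{\mathbf{d}_+}$: by \eqref{StrictOrder} the set $((x,r)\leq^{\mathbf{d}_+})$ is a neighbourhood of $(y,0)$, hence contains it, whenever $(x,r)<^{\mathbf{d}_+}(y,0)$. Thus $\{r\in\mathbb{R}_+:(x,r)<^{\mathbf{d}_+}(y,0)\}$ sits inside the set from the first part, so its infimum is $\ge x\mathbf{d}y$ (this already covers the case $x\mathbf{d}y=\infty$). For the reverse inequality it is enough to show $(x,r)<^{\mathbf{d}_+}(y,0)$ for every $r>x\mathbf{d}y$ and then let $r\downarrow x\mathbf{d}y$. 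Fix such $r$, set $\epsilon=r-x\mathbf{d}y>0$, and — invoking the hypothesis $\underline{\mathbf{d}_+}=\underline{\mathbf{d}}_+$ — consider the $\underline{\mathbf{d}_+}^\bullet$-open ball $B=\{(z,u)\in X_+:y\underline{\mathbf{d}}z+u<\epsilon\}$ about $(y,0)$, which does contain $(y,0)$ since $y\underline{\mathbf{d}}y=0$. I claim $B\subseteq((x,r)\leq^{\mathbf{d}_+})$: for $(z,u)\in B$, the inequality $\mathbf{d}\le\mathbf{d}\circ\underline{\mathbf{d}}$ gives $x\mathbf{d}z\le x\mathbf{d}y+y\underline{\mathbf{d}}z$, hence $x\mathbf{d}z+u<x\mathbf{d}y+\epsilon=r$, so $(x,r)\mathbf{d}_+(z,u)=(x\mathbf{d}z-r+u)_+=0$. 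Hence $((x,r)\leq^{\mathbf{d}_+})$ is a $\underline{\mathbf{d}_+}^\bullet$-neighbourhood of $(y,0)$, which by \eqref{StrictOrder} is exactly $(x,r)<^{\mathbf{d}_+}(y,0)$.

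The only step that is not pure bookkeeping is the choice of the witnessing ball: centring it at $(y,0)$ rather than at $(x,r)$ and then using $\mathbf{d}\le\mathbf{d}\circ\underline{\mathbf{d}}$ to pull $x\mathbf{d}z$ back to $x\mathbf{d}y$. The hypothesis $\underline{\mathbf{d}_+}=\underline{\mathbf{d}}_+$ is needed precisely so that this $\underline{\mathbf{d}}_+$-ball is genuinely open in the topology $\underline{\mathbf{d}_+}^\bullet$ appearing in \eqref{StrictOrder}.
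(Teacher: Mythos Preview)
Your proof is correct and follows essentially the same route as the paper's: the paper records the two equivalences $(x,r)\leq^{\mathbf{d}_+}(y,s)\Leftrightarrow x\mathbf{d}y\le r-s$ and (under $\underline{\mathbf{d}_+}=\underline{\mathbf{d}}_+$) $(x,r)<^{\mathbf{d}_+}(y,s)\Leftrightarrow x\mathbf{d}y<r-s$, proving the latter with exactly your $\underline{\mathbf{d}}_+$-ball argument and the inequality $\mathbf{d}\le\mathbf{d}\circ\underline{\mathbf{d}}$. Your treatment of the $\ge$ direction via $<^{\mathbf{d}_+}\subseteq\leq^{\mathbf{d}_+}$ is a slight shortcut over the paper's explicit choice of the test point $(y,\tfrac{1}{2}\epsilon)$, but the substance is the same.
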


\begin{proof}
This follows directly from
\begin{align}
\label{<=Bd}(x,r)\leq^{\mathbf{d}_+}(y,s)\quad&\Leftrightarrow\quad x\mathbf{d}y\leq r-s.\\
\label{<Bd}(x,r)<^{\mathbf{d}_+}(y,s)\quad&\Leftrightarrow\quad x\mathbf{d}y<r-s\qquad\text{if }\underline{\mathbf{d}_+}=\underline{\mathbf{d}}_+.
\end{align}
Indeed, \eqref{<=Bd} is immediate from the definitions.  For \eqref{<Bd}, say $\epsilon=r-s-x\mathbf{d}y>0$ and $(y,s)<^{\underline{\mathbf{d}_+}}_\epsilon(z,t)$, so $(y,s)\underline{\mathbf{d}}_+(z,t)<\epsilon$, as $\underline{\mathbf{d}_+}=\underline{\mathbf{d}}_+$.  Then $y\underline{\mathbf{d}}z-s+t<\epsilon=r-s-x\mathbf{d}y$ so $x\mathbf{d}z\leq x\mathbf{d}y+y\underline{\mathbf{d}}z<r-t$ and hence $(x,r)\leq^{\mathbf{d}_+}(z,t)$.  Thus $(x,r)<^{\mathbf{d}_+}(y,s)$.  Conversely, if $\epsilon>0$ and $(x,r)\leq^{\mathbf{d}_+}(z,t)$, for all $(z,t)$ with $(y,s)\underline{\mathbf{d}}_+(z,t)<\epsilon$ then, in particular, $(x,r)\leq^{\mathbf{d}_+}(y,s+\frac{1}{2}\epsilon)$ so $x\mathbf{d}y\leq r-s-\frac{1}{2}\epsilon<r-s$.
\end{proof}

What sets $\mathbf{d}_+$ apart from other distances is interpolation.

\begin{prp}
If $\underline{\mathbf{d}_+}=\underline{\mathbf{d}}_+$ then
\begin{align}
\label{+PInterpolation}=_+\!\circ{}<^{\mathbf{d}_+}\!\!\mathcal{P}\ &=\ \mathbf{d}_+\mathcal{P}.\\
\label{+Interpolation}<^{=_+}\circ<^{\mathbf{d}_+}\ &=\ \ <^{\mathbf{d}_+}\!\!.
\end{align}
\end{prp}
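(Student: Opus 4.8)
The plan is to reduce both identities to the numerical descriptions of $\leq^{\mathbf{d}_+}$ and $<^{\mathbf{d}_+}$ recorded as \eqref{<=Bd} and \eqref{<Bd} in the proof of \autoref{xdy}, the latter being exactly where the hypothesis $\underline{\mathbf{d}_+}=\underline{\mathbf{d}}_+$ enters.

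First I would pin down the strict order attached to $=_+$. Since $\leq^{=_+}$ is reflexive, $=_+$ is a hemimetric, and a direct computation shows that $\underline{=_+}$ coincides with $=_+$ (equivalently, $\underline{=}$ coincides with $=$, so $\underline{=_+}=\underline{=}_+$ and \eqref{<Bd} applies verbatim with $\mathbf{d}$ replaced by $=$). Either way one obtains the clean description
\[(x,r)<^{=_+}(y,s)\qquad\Longleftrightarrow\qquad x=y\ \text{and}\ s<r.\]

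With this in hand, \eqref{+Interpolation} is immediate. Indeed $(x,r)\,(<^{=_+}\!\circ<^{\mathbf{d}_+})\,(y,s)$ holds iff there is $(z,t)$ with $(x,r)<^{=_+}(z,t)$ and $(z,t)<^{\mathbf{d}_+}(y,s)$, which by the above forces $z=x$, $t<r$, and by \eqref{<Bd} means $x\mathbf{d}y<t-s$; so the composite relates $(x,r)$ to $(y,s)$ precisely when some $t$ satisfies $x\mathbf{d}y+s<t<r$ (automatically $t\geq0$, as $x\mathbf{d}y+s\geq0$), i.e. precisely when $x\mathbf{d}y<r-s$, which is $(x,r)<^{\mathbf{d}_+}(y,s)$ by \eqref{<Bd} once more. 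For \eqref{+PInterpolation} I would unwind the $\mathbf{GRel}$-composition: for $\mathcal{Y}\subseteq X_+$,
\[(x,r)\,(=_+\!\circ<^{\mathbf{d}_+}\!\mathcal{P})\,\mathcal{Y}\ =\ \inf_{(z,t)\in X_+}\big((x,r)=_+(z,t)+(z,t)(<^{\mathbf{d}_+}\!\mathcal{P})\mathcal{Y}\big).\]
As $(x,r)=_+(z,t)=\infty$ unless $z=x$, this equals $\inf\{(t-r)_+ : t\geq0,\ (x,t)<^{\mathbf{d}_+}(y,s)\text{ for some }(y,s)\in\mathcal{Y}\}$, and by \eqref{<Bd} the admissible $t$ are exactly those with $t>m$, where $m:=\inf_{(y,s)\in\mathcal{Y}}(x\mathbf{d}y+s)\in[0,\infty]$. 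Since $t\mapsto(t-r)_+$ is nondecreasing and continuous, $\inf_{t>m}(t-r)_+=(m-r)_+$, and since $u\mapsto(u)_+$ is continuous and nondecreasing it commutes with infima, so $(m-r)_+=\inf_{(y,s)\in\mathcal{Y}}(x\mathbf{d}y-r+s)_+=(x,r)(\mathbf{d}_+\mathcal{P})\mathcal{Y}$; the case $\mathcal{Y}=\emptyset$ is trivial since both sides are then $\infty$.

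The argument is essentially routine once \eqref{<Bd} is invoked; the only points needing care are in the last step — namely that the infimum of the nondecreasing function $(t-r)_+$ over the open ray $\{t>m\}$ is exactly $(m-r)_+$, and that the positive-part operation passes through this infimum (here $m\geq0$ rules out the usual $\infty-\infty$ pathologies) — together with disposing of the degenerate cases $\mathcal{Y}=\emptyset$ and $x\mathbf{d}y=\infty$.
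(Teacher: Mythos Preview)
Your argument for \eqref{+Interpolation} is correct and essentially identical to the paper's.

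For \eqref{+PInterpolation} there is a genuine mismatch in how you read the operator $\mathbf{a}\mathcal{P}$. You take $(z,t)(<^{\mathbf{d}_+}\!\mathcal{P})\mathcal{Y}=0$ to mean $(z,t)<^{\mathbf{d}_+}(y,s)$ for \emph{some} $(y,s)\in\mathcal{Y}$, and accordingly arrive at $(x,r)(\mathbf{d}_+\mathcal{P})\mathcal{Y}=\inf_{(y,s)\in\mathcal{Y}}(x,r)\mathbf{d}_+(y,s)$; this is consistent with the (somewhat garbled) definition stated just before \autoref{ctscor}. The paper's own proof, however, uses the \emph{supremum} convention: the admissible $t$ are those with $(x,t)<^{\mathbf{d}_+}(y,s)$ for \emph{all} $(y,s)\in Y$, the threshold is $\sup_{(y,s)\in Y}(x\mathbf{d}y+s)$, and the right-hand side is $\sup_{(y,s)\in Y}(x,r)\mathbf{d}_+(y,s)$. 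Your calculation is internally correct but proves a different identity from the one the paper establishes and then invokes in \eqref{d+PInterpolation}, where the content is precisely that one can interpolate a \emph{common} $<^{\mathbf{d}_+}$-lower bound for an arbitrary subset of $X_+$.

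The repair is mechanical: replace ``some'' by ``all'' and your $m=\inf_{(y,s)\in\mathcal{Y}}(x\mathbf{d}y+s)$ by $M=\sup_{(y,s)\in\mathcal{Y}}(x\mathbf{d}y+s)$; the monotonicity/continuity argument for $t\mapsto(t-r)_+$ works verbatim over the open ray $\{t>M\}$, and $(\cdot)_+$ commutes with suprema just as it does with infima.
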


\begin{proof}\
\begin{itemize}
\item[\eqref{+PInterpolation}] For any $Y\subseteq X_+$, \eqref{<Bd} yields
\begin{align*}
(x,r)(=_+\!\circ{}<^{\mathbf{d}_+}\!\!\mathcal{P})Y&=\inf\{(x,r)\!=_+\!(x,t):\forall(y,s)\in Y\ (x,t)<^{\mathbf{d}_+}(y,s)\}\\
&=\inf\{(t-r)_+:\sup_{(y,s)\in Y}(x\mathbf{d}y+s)<t\}\\
&=\sup_{(y,s)\in Y}(x\mathbf{d}y+s-r)_+\\
&=\sup_{(y,s)\in Y}(x,r)\mathbf{d}_+(y,s)\\
&=(x,r)(\mathbf{d}_+\mathcal{P})Y.
\end{align*}

\item[\eqref{+Interpolation}] If $(x,r)<^{\mathbf{d}_+}(y,s)$ then $x\mathbf{d}y<r-s$ so taking $t\in(x\mathbf{d}y+s,r)$ yields
\[(x,r)<^{=_+}(x,t)<^{\mathbf{d}_+}(y,s),\]
while if $(x,r)<^{=_+}(x,t)<^{\mathbf{d}_+}(y,s)$ then $t<r$ so $(x,r)<^{\mathbf{d}_+}(y,s)$. \qedhere
\end{itemize}
\end{proof}

These strong interpolation conditions are really what makes the formal ball construction so useful.  For example, as noted after \autoref{Bfunc}, $\overline{\mathbf{d}_+}\leq\overline{\mathbf{d}}_+$ so \eqref{+PInterpolation} (restricted to singletons on the right hand side) yields
\[(\overline{\mathbf{d}_+}\,\circ<^{\mathbf{d}_+})\leq(\overline{\mathbf{d}}_+\,\circ<^{\mathbf{d}_+})\leq(=_+\circ<^{\mathbf{d}_+})\leq\mathbf{d}_+.\]
This is precisely the condition required for \cite[Proposition 5.4]{Bice2019a}, which yields
\[\underline{<^{\mathbf{d}_+}}\ \ =\ \ \leq^{\underline{\mathbf{d}_+}}\ \ =\ \ \leq^{\underline{\mathbf{d}}_+}.\]
It is also the condition required for \cite[(10.9)]{Bice2019a} so, for all $x\in X$ and $Y\subseteq X$,
\begin{equation}\label{<max=>dmax}
x=\text{$<^{\mathbf{d}_+}$-$\max Y$}\qquad\Rightarrow\qquad x=\text{$\mathbf{d}_+$-$\max Y$}.
\end{equation}
On the other hand, \eqref{+Interpolation} yields $<^{\mathbf{d}_+}\ =\ <^{=_+}\circ<^{\mathbf{d}_+}\ \subseteq\ <^{\overline{\mathbf{d}_+}}\circ\leq^{\mathbf{d}_+}$.  This is precisely the condition required for \cite[(10.10)]{Bice2019a}, which yields the converse
\begin{equation}\label{<max<=dmax}
x=\text{$<^{\mathbf{d}_+}$-$\max Y$}\qquad\Leftarrow\qquad x=\text{$\mathbf{d}_+$-$\max Y$}.
\end{equation}
Indeed, with these interpolation conditions at our disposal, we can reduce Smyth completeness and continuity to their order theoretic counterparts in $X_+$.

\begin{thm}\label{contdomballs}
\begin{align}
\label{Bcomp}X\text{ is $\mathbf{d}^\bullet_\circ$-complete}\quad&\Leftrightarrow\quad\text{$X_+$ is $<^{\mathbf{d}_+}$-$\max$-complete,}\quad\text{if }\underline{\mathbf{d}_+}=\underline{\mathbf{d}}_+.\\
\label{Bcont}X\text{ is $\mathbf{d}^\bullet_\circ$-continuous}\quad&\Leftrightarrow\quad\text{$X_+$ is $<^{\mathbf{d}_+}$-$\max$-continuous and }\mathbf{0}\circ\mathbf{d}=\mathbf{0}.
\end{align}
\end{thm}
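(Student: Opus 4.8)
The plan is to prove both equivalences through the dictionary between $\mathbf{d}$-Cauchy nets in $X$, weighted by their forward tail diameters, and $<^{\mathbf{d}_+}$-directed subsets of $X_+$, using the interpolation identities \eqref{<max=>dmax} and \eqref{<max<=dmax} to replace $<^{\mathbf{d}_+}$-maxima by $\mathbf{d}_+$-maxima at every step. These hold whenever $\underline{\mathbf{d}_+}=\underline{\mathbf{d}}_+$, which is the standing hypothesis of \eqref{Bcomp} and, in \eqref{Bcont}, follows from $\mathbf{0}\circ\mathbf{d}=\mathbf{0}$. Throughout I would use the description of $\mathbf{d}^\bullet_\circ$-convergence recalled just before \autoref{dbhcont}, namely $x_\lambda\barrowc x$ iff $z\mathbf{d}x_\lambda\to z\mathbf{d}x$ for every $z\in X$, together with two elementary consequences of the triangle inequality for a $\mathbf{d}$-Cauchy net $(x_\lambda)$: each net $(z\mathbf{d}x_\lambda)_\lambda$ converges, and if $s_\lambda\downarrow0$ dominates the forward tails (so $x_\mu\mathbf{d}x_\nu\le s_\lambda$ for $\lambda\preceq\mu\prec\nu$) then $\inf_\lambda(z\mathbf{d}x_\lambda+s_\lambda)=\lim_\lambda z\mathbf{d}x_\lambda$, while $x_\lambda\mathbf{d}x\le s_\lambda$ whenever $x=\mathbf{d}^\bullet_\circ$-$\lim x_\lambda$.

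For \eqref{Bcomp}, first suppose $X_+$ is $<^{\mathbf{d}_+}$-$\max$-complete and let $(x_\lambda)$ be $\mathbf{d}$-Cauchy, with tail weights $s_\lambda$ as above. The set $Y=\{(x_\lambda,\epsilon):s_\lambda<\epsilon\}$ is $<^{\mathbf{d}_+}$-directed: for a finite subset one chooses an index past all the given ones whose tail weight is small and then a common radius smaller still, and checks the strict inequality via \eqref{<Bd}. A $<^{\mathbf{d}_+}$-maximum $(x,r)$ of $Y$ is a $\mathbf{d}_+$-maximum by \eqref{<max=>dmax}; since $Y\leq^{\mathbf{d}_+}(x,r)$ forces $x_\lambda\mathbf{d}x\le s_\lambda-r$ with the left side nonnegative and $\inf_\lambda s_\lambda=0$, we get $r=0$, and then $\mathbf{d}_+Y\leq\mathbf{d}_+(x,0)$ unwinds, using $\inf_\lambda(z\mathbf{d}x_\lambda+s_\lambda)=\lim_\lambda z\mathbf{d}x_\lambda$, to $z\mathbf{d}x_\lambda\to z\mathbf{d}x$ for all $z$, i.e. $x_\lambda\barrowc x$; so $X$ is $\mathbf{d}^\bullet_\circ$-complete. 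Conversely, suppose $X$ is $\mathbf{d}^\bullet_\circ$-complete and $Y\subseteq X_+$ is $<^{\mathbf{d}_+}$-directed. The identity net $(y)_{y\in Y}$ on $(Y,<^{\mathbf{d}_+})$ is trivially $\mathbf{d}_+$-Cauchy and satisfies $Y\equiv^{\mathbf{d}_+}(y)_{y\in Y}$, its radii strictly decrease along the net (by \eqref{<Bd}) with limit $r:=\inf\{t:(z,t)\in Y\}$, and its projection $(x_\lambda)$ to $X$ is $\mathbf{d}$-Cauchy. Taking $x=\mathbf{d}^\bullet_\circ$-$\lim x_\lambda$ and using $z\mathbf{d}x_\lambda\to z\mathbf{d}x$, one checks $(x,r)=\mathbf{d}_+$-$\max Y$, hence $<^{\mathbf{d}_+}$-$\max Y$ by \eqref{<max<=dmax}.

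For \eqref{Bcont}, I would first note that $\mathbf{d}^\bullet_\circ$-continuity of $X$ implies $\mathbf{0}\circ\mathbf{d}=\mathbf{0}$: each $x$ is a $\mathbf{d}^\bullet_\circ$-limit of a $\mathbf{d}$-Cauchy net $(x_\lambda)$, for which $x_\lambda\mathbf{d}x\to0$, so $\inf_z z\mathbf{d}x=0$. Granting $\mathbf{0}\circ\mathbf{d}=\mathbf{0}$, hence $\underline{\mathbf{d}_+}=\underline{\mathbf{d}}_+$ and the coincidence of $<^{\mathbf{d}_+}$-maxima with $\mathbf{d}_+$-maxima, the two implications run parallel to \eqref{Bcomp} with all radii shifted by the target radius. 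Given $x\in X$ and a $\mathbf{d}$-Cauchy net $(x_\lambda)$ with $x_\lambda\barrowc x$, the set $\{(x_\lambda,r+\epsilon):s_\lambda<\epsilon\}$ is $<^{\mathbf{d}_+}$-directed with $\mathbf{d}_+$-maximum $(x,r)$ (the maximum identities come out exactly as in \eqref{Bcomp}, using $x_\lambda\mathbf{d}x\le s_\lambda$; nothing forces $r=0$ here), which gives $<^{\mathbf{d}_+}$-$\max$-continuity of $X_+$. Conversely, from a $<^{\mathbf{d}_+}$-directed witness $Y$ for $(x,0)$ one extracts $\mathbf{d}$-Cauchy $(x_\lambda)$ and limiting radius $r=\inf\{t:(z,t)\in Y\}$ as before; now $\mathbf{d}_+Y\leq\mathbf{d}_+(x,0)$ supplies, for each $z$ and $\epsilon>0$, some $(w,u)\in Y$ with $z\mathbf{d}w+u<z\mathbf{d}x+\epsilon$, and since $u\ge r\ge0$ and $z\mathbf{d}w\ge0$ this gives $r<z\mathbf{d}x+\epsilon$; letting $z$ and $\epsilon$ vary and invoking $\mathbf{d}$-initiality, $r\le\inf_z z\mathbf{d}x=0$, so $r=0$ and $z\mathbf{d}x_\lambda\to z\mathbf{d}x$, i.e. $x_\lambda\barrowc x$, so $X$ is $\mathbf{d}^\bullet_\circ$-continuous.

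The main obstacle is exactly this bookkeeping, with two delicate points. First, producing genuinely $<^{\mathbf{d}_+}$-directed sets from Cauchy nets: a naive choice of radii fails, and one must use tail-diameter weights together with the strict inequalities of \eqref{<Bd}. Second, controlling the limiting radius $r=\inf\{t:(z,t)\in Y\}$, which comes out $0$ for free in the completeness argument but in the continuity argument must be pinned to $0$ via $\mathbf{d}$-initiality through the $\mathbf{d}_+$-maximum inequality. Once these are handled, everything reduces, through \eqref{<max=>dmax}--\eqref{<max<=dmax}, to the facts about $\mathbf{d}$-Cauchy nets and $\mathbf{d}^\bullet_\circ$-convergence collected at the start.
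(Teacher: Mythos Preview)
Your proof is correct and, for \eqref{Bcomp}, follows essentially the same path as the paper's first proof: pass between $\mathbf{d}$-Cauchy nets in $X$ and $<^{\mathbf{d}_+}$-directed subsets of $X_+$, invoking \eqref{<max=>dmax}/\eqref{<max<=dmax} to swap $<^{\mathbf{d}_+}$-maxima with $\mathbf{d}_+$-maxima. The only cosmetic difference is your choice of witness set $\{(x_\lambda,\epsilon):s_\lambda<\epsilon\}$ built from tail diameters, whereas the paper uses the ``Cauchy ideal'' $\{(y,r):y\mathbf{d}(x_\lambda)<r\}$; both encode the same information and the verifications are interchangeable.

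For \eqref{Bcont}, however, you take a genuinely different route from the paper's primary proof. The paper transfers the \emph{interpolation} characterisations of continuity (\autoref{dbhcont} and \autoref{maxctsequiv}) between $X$ and $X_+$: given $F<^{\mathbf{d}_+}(y,s)$ in $X_+$, it pulls this down to $X$, interpolates there via $\mathcal{F}\mathbf{d}\circ\Phi^\mathbf{d}\leq\mathcal{F}\mathbf{d}$, and pushes the interpolant back up. You instead verify $<^{\mathbf{d}_+}$-$\max$-continuity at the level of its definition, by directly constructing for each $(x,r)\in X_+$ a $<^{\mathbf{d}_+}$-directed set with $\mathbf{d}_+$-maximum $(x,r)$, reusing the completeness machinery with a radial shift. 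This is closer in spirit to the paper's ``alternative proof'' (which passes through $\mathbf{d}_+{}^\bullet_\circ$-continuity of $X_+$ and \autoref{ctscor}), but more self-contained since you bypass that intermediate step entirely. The payoff of your approach is uniformity---one construction handles both parts of the theorem---at the cost of repeating the Cauchy-net bookkeeping; the paper's interpolation route is shorter for \eqref{Bcont} but relies on the earlier structural theorems. Your handling of the aperture $r=\inf\{t:(z,t)\in Y\}$ via $\mathbf{d}$-initiality in the backward direction of \eqref{Bcont} is exactly the right extra ingredient, and matches what the paper's alternative proof accomplishes through $\mathbf{0}\circ\mathbf{d}=\mathbf{0}$.
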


\begin{proof}\
\begin{itemize}
\item[\eqref{Bcomp}]  Assume $X_+$ is $<^{\mathbf{d}_+}$-$\max$-complete.  For any $\mathbf{d}$-Cauchy $(x_\lambda)\subseteq X$, define
\begin{equation}\label{Cideal}
I=\{(y,r):y\mathbf{d}(x_\lambda)<r\}.
\end{equation}
If $(y,r),(z,s)\in I$ then we can take positive $t<(r-y\mathbf{d}(x_\lambda)),(s-z\mathbf{d}(x_\lambda))$.  Then $y\mathbf{d}(x_\lambda)<r-t$ and $z\mathbf{d}(x_\lambda)<s-t$ so, for sufficiently large $\lambda$, $(y,r),(z,s)<^{\mathbf{d}_+}(x_\lambda,t)\in I$, as $(x_\lambda)$ is $\mathbf{d}$-Cauchy, i.e. $I$ is a $<^{\mathbf{d}_+}$-ideal with $\inf_{(y,r)\in I}r=0$.  As $X_+$ is $<^{\mathbf{d}_+}$-$\max$-complete, $I$ has a $<^{\mathbf{d}_+}$-maximum $(x,0)$, which is also a $\mathbf{d}_+$-maximum by \eqref{<max=>dmax}.  If $z\in X$, \cite[(9.2)]{Bice2019a} yields
\begin{align*}
z\mathbf{d}x&=(z,0)\mathbf{d}_+(x,0)=(z,0)\mathbf{d}_+I=(z,0)\overline{\mathbf{d}_+}I\leq(z,0)\overline{\mathbf{d}}_+I\\
&\leq\inf_{(z,r)\in I}(z,0)\overline{\mathbf{d}}_+(z,r)=\inf_{(z,r)\in I}r=z\mathbf{d}(x_\lambda)\\
&\leq\inf_{y\in X}(z\mathbf{d}y+y\mathbf{d}(x_\lambda))=\inf_{y\mathbf{d}(x_\lambda)<r}(z\mathbf{d}y+r)\\
&=(z,0)\mathbf{d}_+I=(z,0)\mathbf{d}_+(x,0)=z\mathbf{d}x,
\end{align*}
i.e. $z\mathbf{d}x=z\mathbf{d}(x_\lambda)$ so $x_\lambda\barrowc x$ and hence $X$ is $\mathbf{d}^\bullet_\circ$-complete.

Now assume $X$ is $\mathbf{d}^\bullet_\circ$-complete.  Any $<^{\mathbf{d}_+}$-directed $I\subseteq X_+$ yields a net
\[(x_{(x,r)})_{(x,r)\in I}.\]
By replacing each $(y,s)\in I$ with $(y,s-\inf_{(x,r)\in I}r)$ if necessary, we may assume $\inf_{(x,r)\in I}r=0$.  If $(y,s)<^{\mathbf{d}_+}(x,r)$ then $y\mathbf{d}x<s-r\leq s$ so $(x_{(x,r)})_{(x,r)\in I}$ is $\mathbf{d}$-Cauchy and $y\mathbf{d}(x_{(x,r)})\leq s$, for any $(y,s)\in I$.  Thus we have $z\in X$ with $x_{(x,r)\in I}\barrowc z$ and hence, for any $(y,s)\in I$, $y\mathbf{d}z\leq s$ so $(y,s)\leq^{\mathbf{d}_+}(z,0)$.  But for every $(y,s)\in I$, we have $(x,r)\in I$ with $(y,s)<^{\mathbf{d}_+}(x,r)\leq^{\mathbf{d}_+}(z,0)$ so $(y,s)<^{\mathbf{d}_+}(z,0)$, by \cite[(5.3)]{Bice2019a}, i.e. $I<^{\mathbf{d}_+}(z,0)$.  On the other hand, if $(y,s)<^{\mathbf{d}_+}(z,0)$ then $y\mathbf{d}(x_{(x,r)})=y\mathbf{d}z<s$, so we have $(x,r)\in I$ with $r<\frac{1}{2}(s-y\mathbf{d}z)$ and $y\mathbf{d}x<\frac{1}{2}(s+y\mathbf{d}z)<s-r$, i.e. $(y,s)<^{\mathbf{d}_+}(x,r)$.  Thus $(z,0)=\ <^{\mathbf{d}_+}$-$\max I$ so $X_+$ is $<^{\mathbf{d}_+}$-$\max$-complete.

\item[\eqref{Bcomp}]  Alternative proof: First we claim that
\[X\text{ is $\mathbf{d}^\bullet_\circ$-complete}\quad\Leftrightarrow\quad\text{$X_+$ is $\mathbf{d}_+{}^\bullet_\circ$-complete}.\]
For assume that $X$ is $\mathbf{d}^\bullet_\circ$-complete and take $\mathbf{d}_+$-Cauchy $(x_\lambda,r_\lambda)$.  In particular $(r_\lambda)$ is $\mathbf{q}^\mathrm{op}$-Cauchy (where $r\mathbf{q}s=(r-s)_+$) and bounded below by $0$, and hence $r_\lambda\rightarrow r$, for some $r\in[0,\infty)$.  This implies that $(x_\lambda)$ is also $\mathbf{d}$-Cauchy and hence $x_\lambda\barrowc x$, for some $x\in X$.  Thus $(x_\lambda,r_\lambda)\barrowc(x,r)$ so $X_+$ is $\mathbf{d}_+{}^\bullet_\circ$-complete.  Conversely, if $X_+$ is $\mathbf{d}_+{}^\bullet_\circ$-complete then any $\mathbf{d}$-Cauchy $(x_\lambda)$ yields $\mathbf{d}_+$-Cauchy $(x_\lambda,0)\barrowc(x,0)$ and hence $x_\lambda\barrowc x$, i.e. $X$ is $\mathbf{d}^\bullet_\circ$-complete.

Thus the claim is proved, and we next claim that
\[\qquad X_+\text{ is $\mathbf{d}_+{}^\bullet_\circ$-complete}\quad\Leftrightarrow\quad\text{$X_+$ is $<^{\mathbf{d}_+}$-$\max$-complete,}\quad\text{if }\underline{\mathbf{d}_+}=\underline{\mathbf{d}}_+.\]
Indeed, if $X_+$ is $\mathbf{d}_+{}^\bullet_\circ$-complete then $X_+$ is $\mathbf{d}_+$-$\max$-complete, by \cite[(11.2)]{Bice2019a}.  In particular, any $<^{\mathbf{d}_+}$-directed $Y\subseteq X_+$ has a $\mathbf{d}_+$-maximum, which is also a $<^{\mathbf{d}_+}$-maximum, by \eqref{<max=>dmax}, i.e. $X_+$ is $<^{\mathbf{d}_+}$-$\max$-complete.  Conversely, if $X_+$ is $<^{\mathbf{d}_+}$-$\max$-complete then any $<^{\mathbf{d}_+}$-directed $Y\subseteq X_+$ has a $<^{\mathbf{d}_+}$-maximum, which is also a $\mathbf{d}_+$-maximum, by \eqref{<max<=dmax}.  Thus $X_+$ is $<^{\mathbf{d}_+}$-$(\mathbf{d}_+$-$\max)$-complete and hence $\mathbf{d}_+{}^\bullet_\circ$-complete, by \cite[(11.5)]{Bice2019a}, which can be applied because \eqref{+PInterpolation} yields
\begin{equation}\label{d+PInterpolation}
(\underline{\mathbf{d}_+}\,\circ\leq^{\mathbf{d}_+\mathcal{P}})=(\underline{\mathbf{d}}_+\,\circ\leq^{\mathbf{d}_+}\!\mathcal{P})\leq(=_+\circ<^{\mathbf{d}_+}\!\!\mathcal{P})\leq\mathbf{d}_+\mathcal{P}.
\end{equation}

\item[\eqref{Bcont}] Assume $X$ is $\mathbf{d}^\bullet_\circ$-continuous.  So for each $x\in X$, we have $\mathbf{d}$-Cauchy $(x_\lambda)$ with $x_\lambda\carrowb x$ and hence $x_\lambda\mathbf{d}x\rightarrow0$, i.e. $\mathbf{0}\circ\mathbf{d}=\mathbf{0}$.  Now take $F\in\mathcal{F}(X_+)$ and $(y,s)\in X_+$ with $(x,r)<^{\mathbf{d}_+}(y,s)$, for all $(x,r)\in F$.  Thus we have $\epsilon>0$ with $x\mathbf{d}y<r-s-\epsilon$, for all $(x,r)\in F$.  \autoref{dbhcont} then yields $z\in X$ with $z\mathbf{d}y<\frac{1}{2}\epsilon$ and, for all $(x,r)\in F$, $x\mathbf{d}z<x\mathbf{d}y+\frac{1}{2}\epsilon<r-s-\frac{1}{2}\epsilon$ and hence $(x,r)<^{\mathbf{d}_+}(z,s+\frac{1}{2}\epsilon)<^{\mathbf{d}_+}(y,s)$, i.e. $X_+$ is $<^{\mathbf{d}_+}$-continuous.

Now assume $\mathbf{0}\circ\mathbf{d}=\mathbf{0}$ and $X_+$ is $<^{\mathbf{d}_+}$-$\max$-continuous.  Take $F\in\mathcal{F}(X)$, $y\in X$ and $\epsilon>0$.  As $\mathbf{0}\circ\mathbf{d}=\mathbf{0}$, we may enlarge $F$ if necessary and assume $w\mathbf{d}y<\epsilon$, for some $w\in F$.
 For all $x\in F$, $(x,x\mathbf{d}y+\epsilon)<^{\mathbf{d}_+}(y,0)$ so $<^{\mathbf{d}_+}$-$\max$-continuity yields $(z,r)\in X_+$ such that, for all $x\in F$, $(x,x\mathbf{d}y+\epsilon)<^{\mathbf{d}_+}(z,r)<^{\mathbf{d}_+}(y,0)$, i.e. $x\mathbf{d}z<x\mathbf{d}y+\epsilon-r$ and $z\mathbf{d}y<r$.  In particular, $0\leq w\mathbf{d}z<w\mathbf{d}y+\epsilon-r\leq2\epsilon-r$ so $z\mathbf{d}y<r<2\epsilon$ and $\max\limits_{x\in F}x\mathbf{d}z<\max\limits_{x\in F}x\mathbf{d}y+\epsilon$, i.e. $\mathcal{F}\mathbf{d}\circ\Phi^\mathbf{d}\leq\mathcal{F}\mathbf{d}$ so $X$ is $\mathbf{d}^\bullet_\circ$-continuous.

\item[\eqref{Bcont}]  Alternative proof: First we claim that
\begin{equation}\label{X+cts}
X\text{ is $\mathbf{d}^\bullet_\circ$-continuous}\quad\Leftrightarrow\quad\text{$X_+$ is $\mathbf{d}_+{}^\bullet_\circ$-continuous and }\mathbf{0}\circ\mathbf{d}=\mathbf{0}.
\end{equation}
For assume $X$ is $\mathbf{d}^\bullet_\circ$-continuous so, in particular, $\mathbf{0}\circ\mathbf{d}=\mathbf{0}$.  Also, for any $(x,r)\in X_+$, we have $\mathbf{d}$-Cauchy $(x_\lambda)\subseteq X$ with $x_\lambda\barrowc x$, which yields $\mathbf{d}_+$-Cauchy $(x_\lambda,r)\barrowc(x,r)$, i.e. $X_+$ is $\mathbf{d}_+{}^\bullet_\circ$-continuous.  Conversely, assume $X_+$ is $\mathbf{d}_+{}^\bullet_\circ$-continuous and $\mathbf{0}\circ\mathbf{d}=\mathbf{0}$.  Thus, for any $x\in X$, we have $\mathbf{d}_+$-Cauchy $(x_\lambda,r_\lambda)\barrowc(x,0)$ and, for any $\epsilon>0$, we have $y\in X$ with $y\mathbf{d}x<\epsilon$ and hence
\[\lim_\lambda r_\lambda\leq\lim_\lambda(y,0)\mathbf{d}_+(x_\lambda,r_\lambda)=(y,0)\mathbf{d}_+(x,0)=y\mathbf{d}x<\epsilon.\]
Thus $r_\lambda\rightarrow0$ and hence $x_\lambda\barrowc x$, i.e. $X$ is $\mathbf{d}^\bullet_\circ$-continuous.

Thus the claim is proved, and we next claim that
\[X_+\text{ is $\mathbf{d}_+{}^\bullet_\circ$-continuous}\quad\Leftrightarrow\quad\text{$X_+$ is $<^{\mathbf{d}_+}$-$\max$-continuous}.\]
Indeed, if $X_+$ is $\mathbf{d}_+{}^\bullet_\circ$-continuous then $X_+$ is $<^{\mathbf{d}_+}$-$(\mathbf{d}_+$-$\max)$-continuous, by \autoref{ctscor} \eqref{ctscor1} and \eqref{d+PInterpolation}, and hence $<^{\mathbf{d}_+}$-$\max$-continuous, by \eqref{<max<=dmax}.  Conversely, if $X_+$ is $<^{\mathbf{d}_+}$-$\max$-continuous then $X_+$ is $\mathbf{d}_+$-$\max$-continuous, by \eqref{<max=>dmax}, and hence $\mathbf{d}_+{}^\bullet_\circ$-continuous, by \autoref{maxctsequiv}.\qedhere
\end{itemize}
\end{proof}

Combining these yields an analogous result for domains.

\begin{thm}\label{KW}
\[X\text{ is a $\mathbf{d}^\bullet_\circ$-domain with }\mathbf{e}=\underline{\mathbf{d}}\quad\Leftrightarrow\quad X_+\text{ is a $<^{\mathbf{d}_+}$-$\max$-domain with }\leq^{\mathbf{e}_+}=\underline{<^{\mathbf{d}_+}}.\]
\end{thm}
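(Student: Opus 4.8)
The plan is to read both sides of the equivalence off from \autoref{domdefs} and match the ingredients one at a time, using \autoref{contdomballs} for the ``dynamic'' conditions (continuity and completeness) and \autoref{xdy} to pass between distances on $X$ and orders on $X_+$. Unpacking \autoref{domdefs}, the left side asserts that $X$ is $\mathbf{d}^\bullet_\circ$-continuous, $\mathbf{d}^\bullet_\circ$-complete, $\overline{\mathbf{d}}\leq\underline{\mathbf{d}}$ and $\mathbf{e}=\underline{\mathbf{d}}$, while the right side asserts that $X_+$ is $<^{\mathbf{d}_+}$-$\max$-continuous, $<^{\mathbf{d}_+}$-$\max$-complete, $\overline{<^{\mathbf{d}_+}}\leq\underline{<^{\mathbf{d}_+}}$ and $\leq^{\mathbf{e}_+}=\underline{<^{\mathbf{d}_+}}$. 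For the first two pairs, \eqref{Bcont} says $\mathbf{d}^\bullet_\circ$-continuity of $X$ is equivalent to $<^{\mathbf{d}_+}$-$\max$-continuity of $X_+$ together with $\mathbf{0}\circ\mathbf{d}=\mathbf{0}$; and whenever $\mathbf{0}\circ\mathbf{d}=\mathbf{0}$ holds we get $\underline{\mathbf{d}_+}=\underline{\mathbf{d}}_+$ (the proposition recorded just before \autoref{xdy}), which is exactly the side condition under which \eqref{Bcomp} equates $\mathbf{d}^\bullet_\circ$-completeness of $X$ with $<^{\mathbf{d}_+}$-$\max$-completeness of $X_+$. So it suffices to work under the assumption $\mathbf{0}\circ\mathbf{d}=\mathbf{0}$, equivalently $\underline{\mathbf{d}_+}=\underline{\mathbf{d}}_+$; this holds automatically on the left, since a $\mathbf{d}^\bullet_\circ$-domain is in particular $\mathbf{d}^\bullet_\circ$-continuous.

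With $\underline{\mathbf{d}_+}=\underline{\mathbf{d}}_+$ in hand, the remaining two conditions become a direct computation via the explicit description \eqref{<Bd} of $<^{\mathbf{d}_+}$, namely $(x,r)<^{\mathbf{d}_+}(y,s)\Leftrightarrow x\mathbf{d}y<r-s$. Both $\underline{<^{\mathbf{d}_+}}$ and $\overline{<^{\mathbf{d}_+}}$ are $\{0,\infty\}$-valued, so it is enough to identify the relations they determine: one checks $(x,a)\leq^{\underline{<^{\mathbf{d}_+}}}(z,c)\Leftrightarrow x\underline{\mathbf{d}}z\leq a-c$ (so $\underline{<^{\mathbf{d}_+}}=\leq^{\underline{\mathbf{d}}_+}$, as already recorded before \autoref{contdomballs}) and $(x,a)\leq^{\overline{<^{\mathbf{d}_+}}}(z,c)\Leftrightarrow\sup_{y\in X}(x\mathbf{d}y-z\mathbf{d}y)\leq a-c$; since $\mathbf{0}\circ\mathbf{d}=\mathbf{0}$ forces $\sup_{y}(x\mathbf{d}y-z\mathbf{d}y)\geq0$ and hence $\sup_{y}(x\mathbf{d}y-z\mathbf{d}y)=x\overline{\mathbf{d}}z$, this last relation is just $\leq^{\overline{\mathbf{d}}_+}$. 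Therefore $\overline{<^{\mathbf{d}_+}}\leq\underline{<^{\mathbf{d}_+}}$ is equivalent to the inclusion $\leq^{\underline{\mathbf{d}}_+}\subseteq\leq^{\overline{\mathbf{d}}_+}$, which by the recovery formula of \autoref{xdy} applied to the hemimetrics $\overline{\mathbf{d}}$ and $\underline{\mathbf{d}}$ (each value being the least radius $r$ with $(\cdot,r)$ below $(\cdot,0)$) is equivalent to $\overline{\mathbf{d}}\leq\underline{\mathbf{d}}$; and $\leq^{\mathbf{e}_+}=\underline{<^{\mathbf{d}_+}}=\leq^{\underline{\mathbf{d}}_+}$ is, by the same recovery formula, equivalent to $\mathbf{e}=\underline{\mathbf{d}}$. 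This completes the forward direction.

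For the converse, every step above is reversible once we know $\mathbf{0}\circ\mathbf{d}=\mathbf{0}$ (equivalently $\underline{\mathbf{d}_+}=\underline{\mathbf{d}}_+$), so the one genuinely new ingredient — and the step I expect to be the main obstacle — is to deduce $\mathbf{0}\circ\mathbf{d}=\mathbf{0}$ from the hypothesis that $X_+$ is a $<^{\mathbf{d}_+}$-$\max$-domain with $\leq^{\mathbf{e}_+}=\underline{<^{\mathbf{d}_+}}$. Here the plan is to apply $<^{\mathbf{d}_+}$-$\max$-continuity at the formal balls $(y,0)$, $y\in X$: a witnessing $<^{\mathbf{d}_+}$-directed $Y\subseteq X_+$ with $(y,0)=\,<^{\mathbf{d}_+}$-$\max Y$ is non-empty, and since $<^{\mathbf{d}_+}$ is a distance we have $\underline{<^{\mathbf{d}_+}}\leq\,<^{\mathbf{d}_+}$, so $<^{\mathbf{d}_+}\ \subseteq\ \leq^{\underline{<^{\mathbf{d}_+}}}=\leq^{\mathbf{e}_+}$, whence every $(w,q)\in Y$ satisfies $w\mathbf{e}y\leq q$; feeding the appropriate lower holes of $(y,0)$ back through the $<^{\mathbf{d}_+}$-$\max$ condition and driving the radii occurring in $Y$ down, one should be able to produce elements $w$ with $w\mathbf{d}y$ arbitrarily small, i.e. $\mathbf{0}\circ\mathbf{d}=\mathbf{0}$. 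Once this is established, \autoref{contdomballs} runs backwards for continuity and completeness, and the computations of the previous paragraph supply the inequality $\overline{\mathbf{d}}\leq\underline{\mathbf{d}}$ and the identity $\mathbf{e}=\underline{\mathbf{d}}$, finishing the equivalence. I expect essentially all of the work to be concentrated in this last extraction of $\mathbf{0}\circ\mathbf{d}=\mathbf{0}$; the rest is bookkeeping around \autoref{contdomballs} and \autoref{xdy}.
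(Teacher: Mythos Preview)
Your overall architecture is right, and the forward direction goes through essentially as you describe. There are two points where your proposal diverges from the paper, one minor and one substantive.

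The minor point: your computation of $\overline{<^{\mathbf{d}_+}}$ is not quite what you claim. Unwinding the definition, $(x,a)\leq^{\overline{<^{\mathbf{d}_+}}}(z,c)$ says that for every $(y,s)$ with $z\mathbf{d}y<c-s$ one has $x\mathbf{d}y<a-s$, which only yields $x\mathbf{d}y-z\mathbf{d}y\leq a-c$ for those $y$ with $z\mathbf{d}y<c$; the supremum over \emph{all} $y$ that you write down is not what comes out. This is harmless for your purposes once you let $c\to\infty$, but as stated the identity $\leq^{\overline{<^{\mathbf{d}_+}}}=\leq^{\overline{\mathbf{d}}_+}$ is not what you actually prove. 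The paper sidesteps this computation entirely: from $\overline{<^{\mathbf{d}_+}}\supseteq\underline{<^{\mathbf{d}_+}}=\leq^{\underline{\mathbf{d}}_+}$ it deduces $<^{\mathbf{d}_+}\supseteq\leq^{\underline{\mathbf{d}}_+}\circ<^{\mathbf{d}_+}\supseteq<^{(\underline{\mathbf{d}}\circ\mathbf{d})_+}$ and then reads off $\mathbf{d}\leq\underline{\mathbf{d}}\circ\mathbf{d}$ via \autoref{xdy}, which gives $\overline{\mathbf{d}}\leq\underline{\mathbf{d}}$ directly.

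The substantive gap is exactly where you flagged it: extracting $\mathbf{0}\circ\mathbf{d}=\mathbf{0}$ from the right-hand hypotheses. Your proposed mechanism (``feeding lower holes of $(y,0)$ back through the $<^{\mathbf{d}_+}$-$\max$ condition and driving the radii down'') is circular. The $<^{\mathbf{d}_+}$-$\max$ condition tells you that any $(v,p)<^{\mathbf{d}_+}(y,0)$ lies $<^{\mathbf{d}_+}$-below some $(w,q)\in Y$, forcing $q\leq p$; but to produce such $(v,p)$ with $p$ small you would already need some $v$ with $v\mathbf{d}y$ small, which is precisely $\mathbf{0}\circ\mathbf{d}=\mathbf{0}$. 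The paper instead invokes the \emph{dual} completeness that the domain hypothesis gives you for free: by \autoref{Rdomaineqs}, a $<^{\mathbf{d}_+}$-$\max$-domain with $\leq^{\mathbf{e}_+}=\underline{<^{\mathbf{d}_+}}$ is automatically $\leq^{\mathbf{e}_+}$-$\sup$-complete. Then the shift trick works: if $\epsilon=\inf\{r:(w,r)\in Y\}>0$, the translated set $Z=\{(w,r-\epsilon):(w,r)\in Y\}$ is still $\leq^{\mathbf{e}_+}$-directed, hence has a $\leq^{\mathbf{e}_+}$-supremum $(z,s)$; but then every $(w,r)\in Y$ satisfies $(w,r)\leq^{\mathbf{e}_+}(z,s+\epsilon)$, forcing $(y,0)\leq^{\mathbf{e}_+}(z,s+\epsilon)$ and the contradiction $0\leq-s-\epsilon$. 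You should route your argument through \autoref{Rdomaineqs} here; nothing purely at the level of $<^{\mathbf{d}_+}$-maxima will do it.
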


\begin{proof}
Assume $X$ is a $\mathbf{d}^\bullet_\circ$-domain.  In particular, $\overline{\mathbf{d}}\leq\underline{\mathbf{d}}$ so $\overline{\mathbf{d}_+}\leq\overline{\mathbf{d}}_+\leq\underline{\mathbf{d}}_+$ and then \cite[Proposition 5.2]{Bice2019a} yields $<^{\mathbf{d}_+}\ =\ \ \leq^{\overline{\mathbf{d}_+}}\circ<^{\mathbf{d}_+}\ \supseteq\ \ \leq^{\underline{\mathbf{d}}_+}\circ<^{\mathbf{d}_+}$.  As $X$ is $\mathbf{d}^\bullet_\circ$-continuous and hence $\mathbf{d}\leq\mathbf{0}\circ\mathbf{d}$, this yields
\[\underline{<^{\mathbf{d}_+}}\ =\ \ \leq^{\underline{\mathbf{d}}_+}\ \ \subseteq\ \ \overline{<^{\mathbf{d}_+}}.\]
Thus $X_+$ is a $<^{\mathbf{d}_+}$-$\max$-domain, by \autoref{contdomballs}.

Conversely, say $X_+$ is a $<^{\mathbf{d}_+}$-$\max$-domain and $\leq^{\mathbf{e}_+}=\underline{<^{\mathbf{d}_+}}$.  We claim this implies $\mathbf{0}\circ\mathbf{d}=\mathbf{0}$.  To see this note that, as $X$ is $<^{\mathbf{d}_+}$-continuous, for any $x\in X$, we have $<^{\mathbf{d}_+}$-directed $Y\subseteq X_+$ such that $(x,0)=\ <^{\mathbf{d}_+}$-$\max Y$.  As $\leq^{\mathbf{e}_+}=\underline{<^{\mathbf{d}_+}}$, it follows that $Y$ is $\leq^{\mathbf{e}_+}$-directed and $(x,0)=\ \leq^{\mathbf{e}_+}$-$\sup Y$, by \cite[(10.4)]{Bice2019a}.  Let
\[\epsilon=\inf\{r\in[0,\infty):(y,r)\in Y\}.\]
We claim that $\epsilon=0$.  If not, $Z=\{(y,r-\epsilon):(y,r)\in Y\}$ is also $\leq^{\mathbf{e}_+}$-directed and hence $\leq^{\mathbf{e}_+}$-$\sup$-completeness (see \autoref{Rdomaineqs}) yields $(z,s)=\ \leq^{\mathbf{e}_+}$-$\sup Z$.  In particular, for all $(y,r)\in Y$, $(y,r-\epsilon)\leq^{\mathbf{e}_+}(z,s)$ and hence $(y,r)\leq^{\mathbf{e}_+}(z,s+\epsilon)$.  Thus $(x,0)\leq^{\mathbf{e}_+}(z,s+\epsilon)$, i.e. $0\leq x\mathbf{d}z\leq-s-\epsilon<0$, a contradiction.  This proves $\epsilon=0$ so we have $(y,r)\in Y$ with arbitrarily small $r$.  But $(y,r)<^{\mathbf{d}_+}(x,0)$ and hence $(y,r)\leq^{\mathbf{d}_+}(x,0)$, i.e. $y\mathbf{d}x\leq r$.  Thus $\mathbf{0}\circ\mathbf{d}=\mathbf{0}$, as claimed.

It follows that $\leq^{\mathbf{e}_+}\ =\ \underline{<^{\mathbf{d}_+}}=\ \leq^{\underline{\mathbf{d}}_+}$ and hence $\mathbf{e}=\underline{\mathbf{d}}$, by \autoref{xdy}.  As $X$ is a $<^{\mathbf{d}_+}$-$\max$-domain, we also have $\overline{<^{\mathbf{d}_+}}\ \supseteq\ \underline{<^{\mathbf{d}_+}}\ =\ \ \leq^{\underline{\mathbf{d}}_+}$ and hence
\[<^{\mathbf{d}_+}\ \supseteq\ \ \leq^{\underline{\mathbf{d}}_+}\circ<^{\mathbf{d}_+}\ \supseteq\ \ <^{(\underline{\mathbf{d}}_+\!\circ\mathbf{d}_+)}\ =\ \ <^{(\underline{\mathbf{d}}\circ\mathbf{d})_+}\]
For the last inclusion, note that if $(x,r)(\underline{\mathbf{d}}_+\!\circ\mathbf{d}_+)(y,s)$ then $x(\underline{\mathbf{d}}_+\!\circ\mathbf{d}_+)y<r-s$, so we have $\epsilon>0$ and $z\in X$ with $x\underline{\mathbf{d}}_+z+z\mathbf{d}_+y<r-s-\epsilon$ and hence $(x,r)\leq^{\underline{\mathbf{d}}_+}(z,s+z\mathbf{d}_+y+\epsilon)<^{\mathbf{d}_+}(y,s)$.  Thus \autoref{xdy} again yields $\mathbf{d}\leq\underline{\mathbf{d}}\circ\mathbf{d}$ and hence $\overline{\mathbf{d}}\leq\underline{\mathbf{d}}$.  Thus $X$ is a $\mathbf{d}^\bullet_\circ$-domain, again by \autoref{contdomballs}.
\end{proof}
 
\autoref{KW} can be considered as both a dual version of \cite[Theorem 9.1]{KostanekWaszkiewicz2011} and an extension of the Romaguera-Valero theorem characterising Smyth completeness for hemimetrics \textendash\, see \cite[Theorem 3.2]{RomagueraValero2010} or \cite[Theorem 7.3.11]{Goubault2013}.  Indeed, when $\mathbf{d}$ is a hemimetric, $X$ is trivially a $\mathbf{d}^\bullet_\circ$-predomain.  In particular, $\mathbf{0}\circ\mathbf{d}=\mathbf{0}$ so $\underline{<^{\mathbf{d}_+}}=\ \leq^{\mathbf{d}_+}$ and hence \autoref{KW} reduces to
\[X\text{ is $\mathbf{d}^\bullet_\circ$-complete}\qquad\Leftrightarrow\qquad X_+\text{ is a $<^{\mathbf{d}_+}$-$\max$-domain}.\]

\section{Smyth Completions}

As in \cite[Definition 7.5.2]{Goubault2013}, define the \emph{aperture} of $Y\subseteq X_+$ by
\[\alpha(Y)=\inf_{(x,r)\in Y}r.\]
Also denote the (directed/ideal) subsets of $X_+$ with zero aperture by
\begin{align*}
\mathcal{P}_0(X)&=\{Y\in\mathcal{P}(X_+):\alpha(Y)=0\}.\\
\mathcal{P}_0^\mathbf{d}(X)&=\{Y\in\mathcal{P}^{\mathbf{d}_+}(X_+):\alpha(Y)=0\}.\\
\mathcal{I}_0^\mathbf{d}(X)&=\{Y\in\mathcal{I}^{\mathbf{d}_+}(X_+):\alpha(Y)=0\}.
\end{align*}
Note we have a natural embedding of $X$ into $\mathcal{P}(X_+)$ given by
\[x\mapsto x_0=(\leq^{\mathbf{d}_+}\!\!(x,0))=\{(y,s)\in X_+:y\mathbf{d}x\leq s\}.\]

\begin{thm}\label{toppredomaincompletion}
If $X$ is $\mathbf{d}^\bullet_\circ$-continuous then
\begin{equation}\label{topdH=dH|}
\underline{\mathbf{d}_+^\mathcal{H}|_{\mathcal{P}_0^\mathbf{d}(X)}}\ =\ \mathbf{d}_{+\mathcal{H}}|_{\mathcal{P}_0^\mathbf{d}(X)}.
\end{equation}
Moreover, $\mathcal{P}_0^\mathbf{d}(X)$ is a $\mathbf{d}_+^\mathcal{H}{}^\bullet_\circ$-domain with $\mathbf{d}^\mathcal{H}_+{}^\bullet_\circ$-basis $(x_0)_{x\in X}$ and
\begin{equation}\label{topdHxdy}
x_0\mathbf{d}_+^\mathcal{H}y_0\ \leq\ x\mathbf{d}y.
\end{equation}
\end{thm}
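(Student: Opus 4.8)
The plan is to deduce everything from \autoref{predomaincompletion} applied to the formal ball space $(X_+,\mathbf{d}_+)$, transported back to $\mathcal{P}_0^\mathbf{d}(X)$ by a vertical shift together with \autoref{contdomballs}. Since $X$ is $\mathbf{d}^\bullet_\circ$-continuous, \eqref{X+cts} and the intermediate claims in the proof of \autoref{contdomballs} \eqref{Bcont} give $\mathbf{0}\circ\mathbf{d}=\mathbf{0}$ and that $X_+$ is $\mathbf{d}_+$-$\max$-continuous, whence also $\underline{\mathbf{d}_+}=\underline{\mathbf{d}}_+$ by the Proposition preceding \autoref{xdy}. So \autoref{predomaincompletion} applies to $(X_+,\mathbf{d}_+)$: the space $\mathcal{P}^{\mathbf{d}_+}(X_+)$ of \emph{all} $\mathbf{d}_+$-directed subsets of $X_+$ is a $\mathbf{d}_+^\mathcal{H}$-$\max$-domain with $\mathbf{d}_+^\mathcal{H}$-$\max$-basis $\{(\leq^{\mathbf{d}_+}(x,r)):(x,r)\in X_+\}$, one has $\underline{\mathbf{d}_+^\mathcal{H}|_{\mathcal{P}^{\mathbf{d}_+}(X_+)}}=\mathbf{d}_{+\mathcal{H}}|_{\mathcal{P}^{\mathbf{d}_+}(X_+)}$, and $(\leq^{\mathbf{d}_+}(x,r))\mathbf{d}_+^\mathcal{H}(\leq^{\mathbf{d}_+}(y,s))\leq(x,r)\mathbf{d}_+(y,s)$. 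Since $(\leq^{\mathbf{d}_+}(x,0))=x_0$, the case $r=s=0$ of the last inequality is exactly \eqref{topdHxdy}.

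The bridge is a vertical shift: for $Y\subseteq X_+$ and $q\geq0$ write $Y+q=\{(y,s+q):(y,s)\in Y\}$. Adding $q$ leaves the matrix $\big((y,s)\mathbf{d}_+(y',s')\big)_{(y,s),(y',s')\in Y}$ unchanged, so preserves $\mathbf{d}_+$-directedness, and $\alpha(Y+q)=\alpha(Y)+q$; as every nonempty $W\in\mathcal{P}^{\mathbf{d}_+}(X_+)$ has $\alpha(W)<\infty$ and $W=(W-\alpha(W))+\alpha(W)$ with $W-\alpha(W)\in\mathcal{P}_0^\mathbf{d}(X)$, the map $(Y,q)\mapsto Y+q$ is a bijection $(\mathcal{P}_0^\mathbf{d}(X))_+\to\mathcal{P}^{\mathbf{d}_+}(X_+)$ sending $(x_0,r)$ to $x_0+r=(\leq^{\mathbf{d}_+}(x,r))$. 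A short computation (using that $t\mapsto t_+$ commutes with arbitrary suprema and, on $[0,\infty)$-valued families — which is where $\alpha(Y)=0$ is used — with infima) gives
\[(Y+q)\mathbf{d}_+^\mathcal{H}(Z+t)\ =\ (Y\mathbf{d}_+^\mathcal{H}Z-q+t)_+\qquad(Y,Z\in\mathcal{P}_0^\mathbf{d}(X),\ q,t\geq0),\]
so the bijection is an isometry identifying $\mathbf{d}_+^\mathcal{H}|_{\mathcal{P}^{\mathbf{d}_+}(X_+)}$ with $(\mathbf{d}_+^\mathcal{H}|_{\mathcal{P}_0^\mathbf{d}(X)})_+$. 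Writing $\mathbf{e}=\mathbf{d}_+^\mathcal{H}|_{\mathcal{P}_0^\mathbf{d}(X)}$, the estimate $y_0\mathbf{d}_+^\mathcal{H}Y\leq(y,0)\mathbf{d}_+Y\leq r$ for $(y,r)\in Y$ (from $y_0\mathbf{d}_+\leq(y,0)\mathbf{d}_+$ by the triangle inequality and $\mathbf{d}_+$-directedness of $Y$) gives $\mathbf{0}\circ\mathbf{e}=\mathbf{0}$, hence $\underline{\mathbf{e}_+}=\underline{\mathbf{e}}_+$.

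Now \eqref{topdH=dH|} follows as in the proof of \autoref{predomaincompletion} (the $\leq$-direction from $\mathbf{d}_+\leq\mathbf{d}_+\circ\underline{\mathbf{d}_+}$, \eqref{de^H}, \eqref{d^He^H} and \eqref{dH}), the only twist being that the basis sets $(\leq^{\mathbf{d}_+}(x,q))=x_0+q$ appearing in the $\geq$-direction of that proof have aperture $q$; but the shift identity and the $1$-Lipschitzness of $t\mapsto t_+$ give $(x_0+q)\mathbf{d}_+^\mathcal{H}Z-(x_0+q)\mathbf{d}_+^\mathcal{H}Y\leq(x_0\mathbf{d}_+^\mathcal{H}Z-x_0\mathbf{d}_+^\mathcal{H}Y)_+$, allowing one to replace them by $x_0\in\mathcal{P}_0^\mathbf{d}(X)$. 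As for the domain structure: $\mathcal{P}^{\mathbf{d}_+}(X_+)$ is a $\mathbf{d}_+^\mathcal{H}$-$\max$-domain, so under the isometry $(\mathcal{P}_0^\mathbf{d}(X))_+$ is an $\mathbf{e}_+$-$\max$-domain; feeding this into \autoref{contdomballs} applied to $(\mathcal{P}_0^\mathbf{d}(X),\mathbf{e})$ — legitimate since $\underline{\mathbf{e}_+}=\underline{\mathbf{e}}_+$ and the formal-ball interpolation conditions \eqref{+PInterpolation}, \eqref{+Interpolation} hold automatically for $\mathbf{e}_+$ — shows $\mathcal{P}_0^\mathbf{d}(X)$ is $\mathbf{e}^\bullet_\circ$-complete and $\mathbf{e}^\bullet_\circ$-continuous, with the $\mathbf{e}$-Cauchy net witnessing continuity of each $Y\in\mathcal{P}_0^\mathbf{d}(X)$ taken valued in $\{x_0:x\in X\}$, so that $(x_0)_{x\in X}$ is an $\mathbf{e}^\bullet_\circ$-basis. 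Finally $\overline{\mathbf{e}}\leq\overline{\mathbf{d}_+^\mathcal{H}}|_{\mathcal{P}_0^\mathbf{d}(X)}\leq\mathbf{d}_{+\mathcal{H}}|_{\mathcal{P}_0^\mathbf{d}(X)}=\underline{\mathbf{e}}$ by \eqref{de^H} and \eqref{topdH=dH|}, so $\mathcal{P}_0^\mathbf{d}(X)$ is a $\mathbf{d}_+^\mathcal{H}{}^\bullet_\circ$-domain.

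The main obstacle is that last step: invoking \autoref{contdomballs} in the $\mathbf{e}_+$-direction means chaining the equivalences between the $\mathbf{e}_+$-$\max$, $<^{\mathbf{e}_+}$-$\max$ and $\mathbf{e}^\bullet_\circ$ formulations of completeness and continuity (through \eqref{<max=>dmax}, \eqref{<max<=dmax}, \autoref{maxctsequiv} and \autoref{ctscor}), and one must check that $\mathcal{P}^{\mathbf{d}_+}(X_+)$ — and hence $(\mathcal{P}_0^\mathbf{d}(X))_+$ after the isometry — actually meets the hypotheses those results require; this is precisely where one uses that it is a genuine $\mathbf{d}_+^\mathcal{H}$-$\max$-\emph{domain}, so that $\overline{\mathbf{d}_+^\mathcal{H}}\leq\underline{\mathbf{d}_+^\mathcal{H}}$ on it. A more self-contained but longer alternative avoids the isometry and simply imitates the proof of \autoref{predomaincompletion} inside $X_+$, restricting throughout to aperture-zero $\mathbf{d}_+$-directed sets and replacing $\mathbf{d}_+$-$\max$ by $\mathbf{d}_+^\mathcal{H}{}^\bullet_\circ$; there the crux is, for $Y\in\mathcal{P}_0^\mathbf{d}(X)$, to construct a $\mathbf{d}_+$-Cauchy net $((y_\lambda,r_\lambda))\subseteq Y$ with $r_\lambda\to0$ (possible since $\alpha(Y)=0$) and to check via \autoref{dbhcont} that $\lambda\mapsto(y_\lambda)_0$ is $\mathbf{d}_+^\mathcal{H}$-Cauchy with $\mathbf{d}_+^\mathcal{H}{}^\bullet_\circ$-limit $Y$.
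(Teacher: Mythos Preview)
Your approach is correct but genuinely different from the paper's. The paper argues directly inside $\mathcal{P}_0^\mathbf{d}(X)$: it establishes $\mathbf{d}_+^\mathcal{H}{}^\bullet_\circ$-continuity with basis $\{x_0\}$ by a bare-hands interpolation (given $Y,Z,W$ and $\epsilon$, it locates $(w,\delta)\in W$ with $\delta<\epsilon$ and checks $w_0$ interpolates), then obtains $\mathbf{d}_+^\mathcal{H}{}^\bullet_\circ$-completeness from $\mathbf{d}_+^\mathcal{H}$-$\max$-completeness of $\mathcal{P}^{\mathbf{d}_+}(X_+)$ via the interpolation condition $\underline{\mathbf{d}_+^\mathcal{H}}\circ\leq^{\mathbf{d}_+^\mathcal{H}\mathcal{P}}\leq\mathbf{d}_+^\mathcal{H}\mathcal{P}$ (witnessed, as in your proof, by the shift $Y\mapsto Y^r$) together with the aperture bound $\alpha(Y)\leq\alpha(Z)+Z\mathbf{d}_+^\mathcal{H}Y$ to stay in $\mathcal{P}_0^\mathbf{d}(X)$. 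Your route instead promotes the shift to a global isometry $(\mathcal{P}_0^\mathbf{d}(X))_+\cong\mathcal{P}^{\mathbf{d}_+}(X_+)$, transports the $\mathbf{d}_+^\mathcal{H}$-$\max$-domain structure of \autoref{predomaincompletion} to an $\mathbf{e}_+$-$\max$-domain structure on the left, and then descends via \autoref{contdomballs}. This is a pleasing structural explanation of \emph{why} the aperture-zero slice is the right Smyth completion: it is exactly the base space whose formal balls recover $\mathcal{P}^{\mathbf{d}_+}(X_+)$. The cost is the chain of equivalences you flag; in particular the passage from $\mathbf{e}_+$-$\max$-continuous to $<^{\mathbf{e}_+}$-$\max$-continuous (needed to invoke \eqref{Bcont}) does not follow formally from \eqref{<max<=dmax} alone, since one must also produce a $<^{\mathbf{e}_+}$-\emph{directed} witness, and likewise the claim that the resulting $\mathbf{e}$-Cauchy net can be taken in $\{x_0\}$ needs the explicit form $\mathcal{Z}=\{(y_0,s):(y,s)\in Y\}$ of the witness under the isometry rather than an abstract appeal. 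Both gaps are easily filled (your closing ``alternative'' sketch, which is essentially the paper's footnoted variant, does exactly this), but they are where the work hides in your argument, whereas the paper absorbs that work into its direct interpolation.
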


\begin{proof}  As $X$ is $\mathbf{d}^\bullet_\circ$-continuous, $X_+$ is $\mathbf{d}_+{}^\bullet_\circ$-continuous, by \eqref{X+cts}, and hence $\mathbf{d}_+$-$\max$-continuous, by \autoref{ctscor} \eqref{ctscor1} and \eqref{d+PInterpolation}.  In particular, each $x_0\in\mathcal{P}(X_+)$ is $\mathbf{d}_+$-directed and also has zero aperture, as $\mathbf{0}\circ\mathbf{d}=\mathbf{0}$, so $x_0\in\mathcal{P}_0^\mathbf{d}(X)$.

For $\mathbf{d}_+^\mathcal{H}{}^\bullet_\circ$-continuity, take $W\in\mathcal{P}_0^\mathbf{d}(X)$, $Y,Z\subseteq X_+$ and $r>Y\mathbf{d}_+^\mathcal{H}W,Z\mathbf{d}_+^\mathcal{H}W$, so we have $(u,s),(v,t)\in W$ with $Y\mathbf{d}_+(u,s),Z\mathbf{d}_+(v,t)<r$.  As $\alpha(W)=0$, for any $\epsilon>0$, we have $(x,\epsilon')\in W$, for some $\epsilon\in(0,\epsilon)$.  As $W$ is $\mathbf{d}$-directed, we have $(w,\delta)\in W$ with $\{(u,s),(v,t),(x,\epsilon')\}\mathbf{d}_+(w,\delta)<\epsilon-\epsilon'$.  In particular,
\[\delta-\epsilon'\leq(x\mathbf{d}w-\epsilon'+\delta)_+=(x,\epsilon')\mathbf{d}_+(w,\delta)<\epsilon-\epsilon'\]
so $w_0\mathbf{d}_+^\mathcal{H}W\leq w_0\mathbf{d}_+(w,\delta)\leq\delta<\epsilon$.
Also $\mathbf{d}_+$-$\max$-continuity yields
\[(u,s)\mathbf{d}_+w_0=(u,s)\mathbf{d}_+(w,0)\leq(u,s)\mathbf{d}_+(w,\delta)<\epsilon,\]
so $Y\mathbf{d}_+^\mathcal{H}w_0<r+\epsilon$ and, likewise, $Z\mathbf{d}_+^\mathcal{H}w_0<r+\epsilon$.  Thus, as $r$ and $\epsilon$ were arbitrary,
\[\mathcal{F}\mathbf{d}_+^\mathcal{H}\circ\{x_0:x\in X\}\circ\Phi^{\mathbf{d}_+^\mathcal{H}}\leq\mathcal{F}\mathbf{d}_+^\mathcal{H},\]
i.e. $\mathcal{P}_0^\mathbf{d}(X)$ is $\mathbf{d}_+^\mathcal{H}{}^\bullet_\circ$-continuous with $\mathbf{d}^\mathcal{H}_+{}^\bullet_\circ$-basis $\{x_0:x\in X\}$.\footnote{Alternatively one could argue that, for any $Y\in\mathcal{I}_0^\mathbf{d}(X)$ with ($<^\mathbf{d}$-directed) $\mathbf{d}_+^\mathcal{H}{}_\bullet$-interior $Z$, $(x_0)_{(x,r)\in Z}$ is a $\mathbf{d}_+^\mathcal{H}$-Cauchy net with $\mathbf{d}_+^\mathcal{H}{}^\bullet_\circ$-limit $Y$.}

As $\mathbf{d}_+\leq\mathbf{d}_+\circ\underline{\mathbf{d}_+}=\mathbf{d}_+\circ\underline{\mathbf{d}}_+$, \autoref{Hausfunc} yields $\mathbf{d}_+^\mathcal{H}\leq\mathbf{d}_+^\mathcal{H}\circ\underline{\mathbf{d}}_{+\mathcal{H}}$ and hence
\begin{equation}\label{dH+s}
\underline{\mathbf{d}_+^\mathcal{H}}\leq\underline{\mathbf{d}}_{+\mathcal{H}}.
\end{equation}
For $Y\subseteq X_+$ let $Y^r=\{(y,s+r):(y,s)\in Y\}$ so $Y\underline{\mathbf{d}_+^\mathcal{H}}Y^r\leq Y\underline{\mathbf{d}}_{+\mathcal{H}}Y^r\leq r$ and $Y\mathbf{d}_+^\mathcal{H}Z<r$ implies $Y^r\leq^{\mathbf{d}_+^\mathcal{H}}Z$.  This shows that $\underline{\mathbf{d}_+^\mathcal{H}}\ \circ\leq^{\mathbf{d}_+^\mathcal{H}\mathcal{P}}\ \leq\ \mathbf{d}_+^\mathcal{H}\mathcal{P}$, which is the required interpolation condition for \cite[(11.5)]{Bice2019a}.  This means $\mathcal{P}^{\mathbf{d}_+}(X_+)$ is $\mathbf{d}_+^\mathcal{H}{}^\bullet_\circ$-complete, as we already know $\mathcal{P}^{\mathbf{d}_+}(X_+)$ is $\mathbf{d}_+^\mathcal{H}$-$\max$-complete, by \autoref{predomaincompletion}.

Now note that $\alpha(Y)=X_+\mathbf{0}_{+\mathcal{H}}Y$ so, as $\mathbf{0}_{+\mathcal{H}}$ is a distance with $\mathbf{0}_{+\mathcal{H}}\leq\mathbf{d}_{+\mathcal{H}}$,
\begin{equation}\label{alphatri}
\alpha(Y)\leq\alpha(Z)+Z\mathbf{d}_{+\mathcal{H}}Y\leq\alpha(Z)+Z\mathbf{d}_+^\mathcal{H}Y.
\end{equation}
This means any $\mathbf{d}_+^\mathcal{H}{}_\circ$-limit of a $\mathbf{d}_+^\mathcal{H}$-Cauchy net in $\mathcal{P}_0^\mathbf{d}(X)$ also has zero aperture.  Thus $\mathcal{P}_0^\mathbf{d}(X)$ is also $\mathbf{d}_+^\mathcal{H}{}^\bullet_\circ$-complete.

\begin{itemize}
\item[\eqref{topdH=dH|}] For any $Y,Z\in\mathcal{P}^\mathbf{d}_0(X)$, the proof of \eqref{dH=dH|} yields
\[Y\mathbf{d}_{+\mathcal{H}}Z=\sup_{(y,s)\in Y}(\leq^{\mathbf{d}_+}\!\!(y,s))\mathbf{d}_+^\mathcal{H}Z.\]
As $Y$ is $\mathbf{d}_+$-directed and $\alpha(Y)=0$, for any $\epsilon>0$, we can restrict to $s<\epsilon$,
\begin{align*}
Y\mathbf{d}_{+\mathcal{H}}Z&=\sup_{(y,s)\in Y,s<\epsilon}(\leq^{\mathbf{d}_+}\!\!(y,s))\mathbf{d}_+^\mathcal{H}Z\\
&\leq\sup_{(y,s)\in Y,s<\epsilon}y_0\mathbf{d}_+^\mathcal{H}Z\\
&\leq\sup_{(y,s)\in Y,s<\epsilon}(y_0\mathbf{d}_+^\mathcal{H}Z-y_0\mathbf{d}_+^\mathcal{H}Y+y_0\mathbf{d}_+(y,s)))\\
&\leq\sup_{(y,s)\in Y,s<\epsilon}(y_0\mathbf{d}_+^\mathcal{H}Z-y_0\mathbf{d}_+^\mathcal{H}Y+s)\\
&\leq\sup_{(y,s)\in Y,s<\epsilon}(y_0\mathbf{d}_+^\mathcal{H}Z-y_0\mathbf{d}_+^\mathcal{H}Y)+\epsilon.\\
&\leq Y(\underline{\mathbf{d}_+^\mathcal{H}|_{\mathcal{P}_0^\mathbf{d}(X)}})Z+\epsilon,
\end{align*}
as each $y_0\in\mathcal{P}_0^\mathbf{d}(X)$.  This and \eqref{dH+s} yields \eqref{topdH=dH|}, as
\[\mathbf{d}_{+\mathcal{H}}|_{\mathcal{P}_0^\mathbf{d}(X)}\leq\underline{\mathbf{d}_+^\mathcal{H}|_{\mathcal{P}_0^\mathbf{d}(X)}}\leq\underline{\mathbf{d}_+^\mathcal{H}}|_{\mathcal{P}_0^\mathbf{d}(X)}\leq\underline{\mathbf{d}}_{+\mathcal{H}}|_{\mathcal{P}_0^\mathbf{d}(X)}\leq\mathbf{d}_{+\mathcal{H}}|_{\mathcal{P}_0^\mathbf{d}(X)}.\]
\end{itemize}
By \eqref{de^H}, $\mathbf{d}_+^\mathcal{H}\leq\mathbf{d}_{+\mathcal{H}}\circ\mathbf{d}_+^\mathcal{H}$ so $\overline{\mathbf{d}_+^\mathcal{H}}\leq\mathbf{d}_{+\mathcal{H}}$ and hence, by \eqref{topdH=dH|},
\[\overline{\mathbf{d}_+^\mathcal{H}|_{\mathcal{P}_0^\mathbf{d}(X)}}\leq\overline{\mathbf{d}_+^\mathcal{H}}|_{\mathcal{P}_0^\mathbf{d}(X)}\leq\mathbf{d}_{+\mathcal{H}}|_{\mathcal{P}_0^\mathbf{d}(X)}=\underline{\mathbf{d}_+^\mathcal{H}|_{\mathcal{P}_0^\mathbf{d}(X)}}.\]
Thus $\mathcal{P}_0^\mathbf{d}(X)$ is a $\mathbf{d}_+^\mathcal{H}{}^\bullet_\circ$-domain.  Lastly, for \eqref{topdHxdy}, note that \eqref{dHxdy} yields
\[x_0\mathbf{d}_+^\mathcal{H}y_0\ \leq\ (x,0)\mathbf{d}_+(y,0)=x\mathbf{d}y.\qedhere\]
\end{proof}

\begin{cor}\label{toppdcomp}
The following are equivalent.
\begin{enumerate}
\item\label{toppdcompp} X is a $\mathbf{d}^\bullet_\circ$-predomain.
\item\label{toppdcompb} X is a $\mathbf{d}'{}^\bullet_\circ$-basis of a $\mathbf{d}'{}^\bullet_\circ$-domain $X'\supseteq X$ with $\mathbf{d}'|_X=\mathbf{d}$.
\end{enumerate}
\end{cor}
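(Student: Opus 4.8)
The plan is to run the proof of \autoref{pdcomp} with \autoref{predomaincompletion} replaced by its topological counterpart \autoref{toppredomaincompletion}, and with \autoref{reflexbasis} supplying the reverse direction; the one new wrinkle is to notice that $X_+$ itself inherits enough structure from a $\mathbf{d}^\bullet_\circ$-predomain $X$ for \autoref{pdcomp} to be applied directly to $X_+$.

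For \eqref{toppdcompp}$\Rightarrow$\eqref{toppdcompb}, assume $X$ is a $\mathbf{d}^\bullet_\circ$-predomain, so that $X$ is $\mathbf{d}^\bullet_\circ$-continuous and $\overline{\mathbf{d}}\leq\underline{\mathbf{d}}$. First I would record that $X_+$ is then a $\mathbf{d}_+$-$\max$-predomain: it is $\mathbf{d}_+$-$\max$-continuous exactly as in the first sentence of the proof of \autoref{toppredomaincompletion} (via \eqref{X+cts}, \autoref{ctscor}~\eqref{ctscor1} and \eqref{d+PInterpolation}), while $\mathbf{0}\circ\mathbf{d}=\mathbf{0}$ gives $\underline{\mathbf{d}_+}=\underline{\mathbf{d}}_+$, whence $\overline{\mathbf{d}_+}\leq\overline{\mathbf{d}}_+\leq\underline{\mathbf{d}}_+=\underline{\mathbf{d}_+}$. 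Next I would let $X'$ be the disjoint union of $X$ and $\mathcal{P}_0^\mathbf{d}(X)$ and extend $\mathbf{d}_+^\mathcal{H}$ to $\mathbf{d}'$ on $X'$ by declaring each $x\in X$ to be $\mathbf{d}'$-equivalent to $x_0=(\leq^{\mathbf{d}_+}\!(x,0))$. By \autoref{toppredomaincompletion}, $\mathcal{P}_0^\mathbf{d}(X)$ is a $\mathbf{d}_+^\mathcal{H}{}^\bullet_\circ$-domain with $\mathbf{d}_+^\mathcal{H}{}^\bullet_\circ$-basis $(x_0)_{x\in X}$; since Smyth convergence and the hemimetrics $\overline{\cdot},\underline{\cdot}$ depend only on the functions $y\mathbf{d}'$ and $\mathbf{d}'y$, adjoining the $\mathbf{d}'$-equivalent copies $x\leftrightarrow x_0$ leaves $X'$ a $\mathbf{d}'{}^\bullet_\circ$-domain of which $X$ is a $\mathbf{d}'{}^\bullet_\circ$-basis. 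It then remains only to upgrade \eqref{topdHxdy} to $\mathbf{d}'|_X=\mathbf{d}$, i.e. $x_0\mathbf{d}_+^\mathcal{H}y_0=x\mathbf{d}y$; but the reverse inequality is precisely the computation \eqref{dHxdy=}, now applied to the $\mathbf{d}_+$-$\max$-predomain $X_+$ with $x_0=(\leq^{\mathbf{d}_+}\!(x,0))$, $y_0=(\leq^{\mathbf{d}_+}\!(y,0))$ and $(x,0)\mathbf{d}_+(y,0)=x\mathbf{d}y$.

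For \eqref{toppdcompb}$\Rightarrow$\eqref{toppdcompp}, suppose $X\subseteq X'$ is a $\mathbf{d}'{}^\bullet_\circ$-basis of a $\mathbf{d}'{}^\bullet_\circ$-domain $X'$ with $\mathbf{d}'|_X=\mathbf{d}$. Applying the basis property to a point $x$ of $X$ produces a $\mathbf{d}'$-Cauchy net in $X$ — hence $\mathbf{d}$-Cauchy, as $\mathbf{d}'|_X=\mathbf{d}$ — that $\mathbf{d}'{}^\bullet_\circ$-converges to $x$; testing this convergence against the functions $y\mathbf{d}'=y\mathbf{d}$ for $y\in X$ shows the net $\mathbf{d}^\bullet_\circ$-converges to $x$ inside $X$, so $X$ is $\mathbf{d}^\bullet_\circ$-continuous. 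Finally, \autoref{reflexbasis} applied with $X'$ in place of $X$ and $X$ in place of $B$ gives $\overline{\mathbf{d}}=\overline{\mathbf{d}'|_X}=\overline{\mathbf{d}'}|_X$ and $\underline{\mathbf{d}}=\underline{\mathbf{d}'|_X}=\underline{\mathbf{d}'}|_X$, and since a $\mathbf{d}'{}^\bullet_\circ$-domain is in particular a $\mathbf{d}'{}^\bullet_\circ$-predomain we have $\overline{\mathbf{d}'}\leq\underline{\mathbf{d}'}$; hence $\overline{\mathbf{d}}\leq\underline{\mathbf{d}}$ and $X$ is a $\mathbf{d}^\bullet_\circ$-predomain.

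The one step genuinely requiring care is the beginning of \eqref{toppdcompp}$\Rightarrow$\eqref{toppdcompb}: verifying that $X_+$ really is a $\mathbf{d}_+$-$\max$-predomain so that \eqref{dHxdy=} is legitimately available for it, together with the routine-but-load-bearing check that forming the disjoint union $X\sqcup\mathcal{P}_0^\mathbf{d}(X)$ and identifying $x$ with $x_0$ preserves Smyth completeness, Smyth continuity, the predomain inequality $\overline{\cdot}\leq\underline{\cdot}$, and the basis property. Everything else carries over essentially verbatim from \autoref{pdcomp} and \autoref{reflexbasis}.
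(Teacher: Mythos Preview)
Your proof is correct and follows essentially the same approach as the paper's: both directions proceed exactly as in \autoref{pdcomp}, with \autoref{toppredomaincompletion} in place of \autoref{predomaincompletion} and the reverse inequality $x_0\mathbf{d}_+^\mathcal{H}y_0\geq x\mathbf{d}y$ obtained from the computation \eqref{dHxdy=} applied to $X_+$ once $\overline{\mathbf{d}_+}\leq\underline{\mathbf{d}_+}$ is verified. You are more explicit than the paper about why $X_+$ is a $\mathbf{d}_+$-$\max$-predomain and why $\mathbf{d}^\bullet_\circ$-continuity of $X$ follows from the basis property, but these are the same steps the paper leaves implicit.
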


\begin{proof}\
\begin{itemize}
\item[\eqref{toppdcompp}$\Rightarrow$\eqref{toppdcompb}]  Assume \eqref{toppdcompp} and let $X'$ be the (disjoint) union of $X$ and $\mathcal{P}_0^\mathbf{d}(X)$.  Extend $\mathbf{d}_+^\mathcal{H}$ to $\mathbf{d}'$ on $X'$ by making each $x\in X$ $\mathbf{d}'$-equivalent to $x_0$.  By \autoref{toppredomaincompletion}, the only thing left to show is that the inequality in \eqref{topdHxdy} is an equality.  But $\overline{\mathbf{d}}\leq\underline{\mathbf{d}}$ implies $\overline{\mathbf{d}_+}\leq\overline{\mathbf{d}}_+\leq\underline{\mathbf{d}}_+=\underline{\mathbf{d}_+}$ so, by \eqref{dHxdy=},
\[x_0\mathbf{d}_+^\mathcal{H}y_0\geq(x,0)\mathbf{d}_+(y,0)=x\mathbf{d}y.\]

\item[\eqref{toppdcompb}$\Rightarrow$\eqref{toppdcompp}]  If $X\subseteq X'$ is a $\mathbf{d}'{}^\bullet_\circ$-basis and $\mathbf{d}=\mathbf{d}'|_X$ then $X$ is certainly $\mathbf{d}^\bullet_\circ$-continuous.  If $X'$ is also a $\mathbf{d}'{}^\bullet_\circ$-(pre)domain then $\overline{\mathbf{d}}=\overline{\mathbf{d}'}|_X\leq\underline{\mathbf{d}'}|_X=\underline{\mathbf{d}}$, by \autoref{reflexbasis}, i.e. $X$ is a $\mathbf{d}^\bullet_\circ$-predomain.\qedhere
\end{itemize}
\end{proof}

In particular, any hemimetric space $(X,\mathbf{d})$ has a Smyth completion $(X',\mathbf{d}')$, but there is no guarantee that $\mathbf{d}'$ will also be a hemimetric, i.e. $\leq^{\mathbf{d}'}$ may not be reflexive on the larger space $X'$.  On the other hand, $\mathbf{d}_{+\mathcal{H}}$ is always a hemimetric on $\mathcal{P}_0^\mathbf{d}(X)$, which is $\mathbf{d}_{+\mathcal{H}}{}^\circ_\circ$-complete by \autoref{Tdomaineqs} and \autoref{toppredomaincompletion}.  Indeed the hemimetric space $(\mathcal{P}_0^\mathbf{d}(X),\mathbf{d}_{+\mathcal{H}})$, or the equivalent quasimetric space $(\mathcal{I}_0^\mathbf{d}(X),\mathbf{d}_{+\mathcal{H}})$, is often called the \emph{Yoneda completion} of $X$.  In fact, by the following result and \eqref{topdH=dH|}, we see that $X$ has a hemimetric Smyth completion precisely when it coincides with the Yoneda completion.

\begin{thm}\label{ESmyth}
If $\mathbf{d}$ is a hemimetric, the following are equivalent.
\begin{enumerate}
\item\label{dNoth} $X$ is $\mathbf{d}$-Noetherian.
\item\label{Scomp} $(\mathcal{P}_0^\mathbf{d}(X),\mathbf{d}_{+\mathcal{H}})$ is Smyth complete.
\item\label{hemispace} $(\mathcal{P}_0^\mathbf{d}(X),\mathbf{d}_+^\mathcal{H})$ is a hemimetric space.
\item\label{EScomp} $(X,\mathbf{d})$ has a hemimetric Smyth completion.
\end{enumerate}
\end{thm}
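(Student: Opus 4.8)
The plan is to establish the cycle $\eqref{dNoth}\Rightarrow\eqref{hemispace}\Rightarrow\eqref{EScomp}\Rightarrow\eqref{dNoth}$ together with $\eqref{hemispace}\Rightarrow\eqref{Scomp}\Rightarrow\eqref{dNoth}$, the last two implications serving only to tie in \eqref{Scomp}. The first thing to record is that, since $\mathbf{d}$ is a hemimetric, $\overline{\mathbf{d}}=\underline{\mathbf{d}}=\mathbf{d}$: for any $x,z$ the triangle inequality $x\mathbf{d}y\le x\mathbf{d}z+z\mathbf{d}y$ gives $\overline{\mathbf{d}}(x,z)\le x\mathbf{d}z$, with equality at $y=z$, and symmetrically for $\underline{\mathbf{d}}$. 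Hence $X$ is trivially a $\mathbf{d}^\bullet_\circ$-predomain, $\mathbf{0}\circ\mathbf{d}=\mathbf{0}$, $\underline{\mathbf{d}_+}=\underline{\mathbf{d}}_+$, and \autoref{toppredomaincompletion} applies: $\mathcal{P}_0^\mathbf{d}(X)$ is a $\mathbf{d}_+^\mathcal{H}{}^\bullet_\circ$-domain with $\mathbf{d}_+^\mathcal{H}{}^\bullet_\circ$-basis $\{x_0:x\in X\}$, $\underline{\mathbf{d}_+^\mathcal{H}|_{\mathcal{P}_0^\mathbf{d}(X)}}=\mathbf{d}_{+\mathcal{H}}|_{\mathcal{P}_0^\mathbf{d}(X)}$, and (because $\overline{\mathbf{d}}\le\underline{\mathbf{d}}$, exactly as in the proof of \autoref{toppdcomp}) the inequality in \eqref{topdHxdy} is an equality, so $x\mapsto x_0$ is an isometry of $(X,\mathbf{d})$ onto its image in $(\mathcal{P}_0^\mathbf{d}(X),\mathbf{d}_+^\mathcal{H})$; a direct computation using $y\mathbf{d}y=0$ also gives $x_0\mathbf{d}_{+\mathcal{H}}y_0=\sup_{u\mathbf{d}x\le s}(u\mathbf{d}y-s)_+=x\mathbf{d}y$, so it is likewise an isometry into $(\mathcal{P}_0^\mathbf{d}(X),\mathbf{d}_{+\mathcal{H}})$. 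Finally $\mathbf{d}_{+\mathcal{H}}$ is always a hemimetric on $\mathcal{P}_0^\mathbf{d}(X)$, since it coincides there with the lower hemimetric $\underline{\mathbf{d}_+^\mathcal{H}|_{\mathcal{P}_0^\mathbf{d}(X)}}$ by \eqref{topdH=dH|}, and is hence reflexive. So the content of the theorem is really a comparison of $\mathbf{d}_+^\mathcal{H}$ and $\mathbf{d}_{+\mathcal{H}}$ on $\mathcal{P}_0^\mathbf{d}(X)$.

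The reusable ingredient is: \emph{in a hemimetric space $(Y,\mathbf{e})$ in which every $\mathbf{e}$-Cauchy net has an $\mathbf{e}^\bullet_\circ$-limit, every $\mathbf{e}$-Cauchy net is $\mathbf{e}^\vee$-Cauchy, hence $(Y,\mathbf{e})$ is $\mathbf{e}$-Noetherian}. Indeed, if $(y_\lambda)$ is $\mathbf{e}$-Cauchy with $y_\lambda\barrowc y$ then $c\mathbf{e}y_\lambda\to c\mathbf{e}y$ for every $c$, so $y\mathbf{e}y_\lambda\to y\mathbf{e}y=0$, while $y_\gamma\mathbf{e}y=\lim_\lambda y_\gamma\mathbf{e}y_\lambda\le\sup_{\gamma\prec\delta}y_\gamma\mathbf{e}y_\delta\to0$ by $\mathbf{e}$-Cauchyness (the limit being over the cofinal $\lambda\succ\gamma$); thus $y_\gamma\mathbf{e}^\vee y_\delta\le y_\gamma\mathbf{e}y+y\mathbf{e}y_\delta\to0$. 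Via the description of Smyth completeness recalled in the preliminaries, this yields $\eqref{EScomp}\Rightarrow\eqref{dNoth}$ (a hemimetric Smyth completion $(X',\mathbf{d}')$ is a $\mathbf{d}'{}^\bullet_\circ$-complete hemimetric space, hence $\mathbf{d}'$-Noetherian, and a $\mathbf{d}$-Cauchy sequence in $X$ is $\mathbf{d}'$-Cauchy, so $\mathbf{d}'^\mathrm{op}$-Cauchy, so $\mathbf{d}^\mathrm{op}$-Cauchy) and $\eqref{Scomp}\Rightarrow\eqref{dNoth}$ (transport a $\mathbf{d}$-Cauchy sequence along the isometry $x\mapsto x_0$ into the Smyth-complete hemimetric space $(\mathcal{P}_0^\mathbf{d}(X),\mathbf{d}_{+\mathcal{H}})$, apply the ingredient, transport back). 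The implication $\eqref{hemispace}\Rightarrow\eqref{Scomp}$ is short: if $\mathbf{d}_+^\mathcal{H}$ is a hemimetric on $\mathcal{P}_0^\mathbf{d}(X)$ it equals its own lower hemimetric, which is $\mathbf{d}_{+\mathcal{H}}$ by \eqref{topdH=dH|}, so the $\mathbf{d}_+^\mathcal{H}{}^\bullet_\circ$-completeness of the domain $\mathcal{P}_0^\mathbf{d}(X)$ is $\mathbf{d}_{+\mathcal{H}}{}^\bullet_\circ$-completeness, i.e. Smyth completeness. And $\eqref{hemispace}\Rightarrow\eqref{EScomp}$ is immediate from \autoref{toppdcomp}: $X$ is a $\mathbf{d}^\bullet_\circ$-predomain, so it has a Smyth completion $X'$ whose distance $\mathbf{d}'$ restricts to $\mathbf{d}_+^\mathcal{H}$ on the copy $\mathcal{P}_0^\mathbf{d}(X)\subseteq X'$, with each point of $X$ made $\mathbf{d}'$-equivalent to its image $x_0$; under \eqref{hemispace} this makes $\mathbf{d}'$ a hemimetric on all of $X'$.

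This leaves $\eqref{dNoth}\Rightarrow\eqref{hemispace}$, which I expect to be the main obstacle. The plan is to show that \emph{$X_+$ is $\mathbf{d}_+$-Noetherian} and then apply \autoref{dNothdH} to $(X_+,\mathbf{d}_+)$, giving $\mathbf{d}_+^\mathcal{H}$ a hemimetric on $\mathcal{P}^{\mathbf{d}_+}(X_+)$, hence on its subset $\mathcal{P}_0^\mathbf{d}(X)$. By \autoref{d^Hhemi}\eqref{dpCnet} it suffices to check that every $\mathbf{d}_+$-pre-Cauchy net $(x_\lambda,r_\lambda)$ in $X_+$ is $\mathbf{d}_+^\mathrm{op}$-Cauchy. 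Here one uses $(a+b)_+\ge b_+$ for $a\ge0$ to deduce from $\mathbf{d}_+$-pre-Cauchyness that $\lim_\gamma\limsup_\delta(r_\delta-r_\gamma)_+=0$, which forces $r_\lambda\to r$ for some $r\in[0,\infty)$; feeding this back, $(x_\lambda)$ is $\mathbf{d}$-pre-Cauchy, hence $\mathbf{d}^\mathrm{op}$-Cauchy by \autoref{d^Hhemi}\eqref{dpCnet} and then $\mathbf{d}^\vee$-Cauchy by \cite[Proposition 7.2]{Bice2019a}; combining $x_\gamma\mathbf{d}^\vee x_\delta\to0$ with $r_\lambda\to r$ gives $(x_\gamma,r_\gamma)\mathbf{d}_+^\vee(x_\delta,r_\delta)\to0$, so $(x_\lambda,r_\lambda)$ is $\mathbf{d}_+^\vee$-Cauchy, in particular $\mathbf{d}_+^\mathrm{op}$-Cauchy. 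The delicate part is the bookkeeping with the truncation $(\cdot)_+$ on $[0,\infty)$ and justifying the iterated-limit manipulations once the radii are known to converge; everything else reduces to direct appeals to \autoref{toppredomaincompletion}, \autoref{toppdcomp}, \autoref{dNothdH}, \autoref{d^Hhemi} and the preliminaries.
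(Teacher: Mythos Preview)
Your proof is correct and follows essentially the same route as the paper: both establish \eqref{dNoth}$\Rightarrow$\eqref{hemispace} by lifting Noetherianness to $X_+$ and invoking \autoref{dNothdH}, both get \eqref{hemispace}$\Rightarrow$\eqref{Scomp} and \eqref{hemispace}$\Rightarrow$\eqref{EScomp} from \eqref{topdH=dH|} and \autoref{toppdcomp}, and both deduce \eqref{EScomp}$\Rightarrow$\eqref{dNoth} from the observation that Smyth completeness of a hemimetric forces Cauchy nets to be $\mathrm{op}$-Cauchy.

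The one genuine difference is how \eqref{Scomp} is tied in. You prove \eqref{Scomp}$\Rightarrow$\eqref{dNoth} directly, transporting a $\mathbf{d}$-Cauchy sequence along the isometry $x\mapsto x_0$ into the Smyth-complete hemimetric space $(\mathcal{P}_0^\mathbf{d}(X),\mathbf{d}_{+\mathcal{H}})$ and reusing the same ingredient as in \eqref{EScomp}$\Rightarrow$\eqref{dNoth}. The paper instead proves \eqref{Scomp}$\Rightarrow$\eqref{hemispace}: it notes that if $(\mathcal{P}_0^\mathbf{d}(X),\mathbf{d}_{+\mathcal{H}})$ is Smyth complete then it is a $\mathbf{d}_{+\mathcal{H}}{}^\bullet_\circ$-domain with $\underline{\mathbf{d}_{+\mathcal{H}}}=\mathbf{d}_{+\mathcal{H}}$, while by \autoref{toppredomaincompletion} it is also a $\mathbf{d}_+^\mathcal{H}{}^\bullet_\circ$-domain with $\underline{\mathbf{d}_+^\mathcal{H}}=\mathbf{d}_{+\mathcal{H}}$, and then the duality \autoref{Tdomaineqs} forces $\mathbf{d}_+^\mathcal{H}={}^\circ_\circ\mathbf{d}_{+\mathcal{H}}=\mathbf{d}_{+\mathcal{H}}$ on $\mathcal{P}_0^\mathbf{d}(X)$. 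Your argument is more elementary and self-contained; the paper's extracts the structural point that the way-below distance over a given hemimetric is uniquely determined, so the two candidate distances must coincide.

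One cosmetic remark: in your ``reusable ingredient'' the displayed bound $y_\gamma\mathbf{e}^\vee y_\delta\le y_\gamma\mathbf{e}y+y\mathbf{e}y_\delta$ only controls $y_\gamma\mathbf{e}y_\delta$; for the reverse you need $y_\delta\mathbf{e}y_\gamma\le y_\delta\mathbf{e}y+y\mathbf{e}y_\gamma$, but since you have already shown both $y_\lambda\mathbf{e}y\to0$ and $y\mathbf{e}y_\lambda\to0$ the conclusion stands.
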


\begin{proof}\
\begin{itemize}
\item[\eqref{EScomp}$\Rightarrow$\eqref{dNoth}]  We show that any $\mathbf{d}$-Cauchy $(x_n)$ in a Smyth complete hemimetric space $(X,\mathbf{d})$ is $\mathbf{d}^\mathrm{op}$-pre-Cauchy.  Indeed Smyth completeness yields $x=\mathbf{d}^\bullet_\circ$-$\lim x_n$ and then $\underline{\mathbf{d}}=\mathbf{d}$ and \cite[(8.13)]{Bice2019a} yield
\[\lim_j\lim_k x_k\mathbf{d}x_j=\lim_j x\mathbf{d}x_j=x\mathbf{d}x=0.\]

\item[\eqref{dNoth}$\Rightarrow$\eqref{hemispace}]  If every $\mathbf{d}$-Cauchy net in $X$ is $\mathbf{d}^\mathrm{op}$-Cauchy then every $\mathbf{d}_+$-Cauchy net in $X_+$ is ($\mathbf{d}_+)^\mathrm{op}$-Cauchy (as in the alternative proof of \eqref{Bcomp}).  Thus, by \autoref{dNothdH}, $\mathbf{d}_+^\mathcal{H}$ is a hemimetric on $\mathcal{P}^{\mathbf{d}_+}(X_+)$ and hence $\mathcal{P}_0^\mathbf{d}(X)$.

\item[\eqref{hemispace}$\Rightarrow$\eqref{Scomp}]  By \autoref{toppredomaincompletion}, $\mathcal{P}_0^\mathbf{d}(X)$ is $\mathbf{d}_+^\mathcal{H}{}^\bullet_\circ$-complete.  If $\mathbf{d}_+^\mathcal{H}$ is a hemimetric then
\[\mathbf{d}_+^\mathcal{H}=\underline{\mathbf{d}_+^\mathcal{H}}\leq\underline{\mathbf{d}}_{+\mathcal{H}}=\mathbf{d}_{+\mathcal{H}}\leq\mathbf{d}_+^\mathcal{H},\]
by \eqref{dH} and \eqref{dH+s}, so $\mathcal{P}_0^\mathbf{d}(X)$ is $\mathbf{d}_{+\mathcal{H}}{}^\bullet_\circ$-complete.

\item[\eqref{Scomp}$\Rightarrow$\eqref{hemispace}]  Say $\mathcal{P}_0^\mathbf{d}(X)$ is $\mathbf{d}_{+\mathcal{H}}{}^\bullet_\circ$-complete.  As $\mathbf{d}_{+\mathcal{H}}$ is a hemimetric on $\mathcal{P}^\mathbf{d}_0(X)$, this means $\mathcal{P}^\mathbf{d}_0(X)$ is a $\mathbf{d}_{+\mathcal{H}}{}^\bullet_\circ$-domain with $\mathbf{d}_{+\mathcal{H}}=\underline{\mathbf{d}_{+\mathcal{H}}}$ so \autoref{Tdomaineqs} yields
\[\mathbf{d}_{+\mathcal{H}}|_{\mathcal{P}_0^\mathbf{d}(X)}={}^\circ_\circ(\mathbf{d}_{+\mathcal{H}}|_{\mathcal{P}_0^\mathbf{d}(X)}).\]
But $\mathcal{P}_0^\mathbf{d}(X)$ is also a $\mathbf{d}_+^\mathcal{H}{}^\bullet_\circ$-domain with $\underline{\mathbf{d}_+^\mathcal{H}|_{\mathcal{P}_0^\mathbf{d}(X)}}\ =\ \mathbf{d}_{+\mathcal{H}}|_{\mathcal{P}_0^\mathbf{d}(X)}$, by \autoref{toppredomaincompletion}, so again by \autoref{Tdomaineqs}, $\mathbf{d}_+^\mathcal{H}|_{\mathcal{P}_0^\mathbf{d}(X)}={}^\circ_\circ(\mathbf{d}_{+\mathcal{H}}|_{\mathcal{P}_0^\mathbf{d}(X)})=\mathbf{d}_{+\mathcal{H}}|_{\mathcal{P}_0^\mathbf{d}(X)}$.

\item[\eqref{hemispace}$\Rightarrow$\eqref{EScomp}]  If $\mathbf{d}_+^\mathcal{H}$ is a hemimetric on $\mathcal{P}_0^\mathbf{d}(X)$ then $(X',\mathbf{d}')$ in the proof of \autoref{toppdcomp} \eqref{toppdcompp}$\Rightarrow$\eqref{toppdcompb} is a hemimetric Smyth completion of $X$.\qedhere
\end{itemize}
\end{proof}

Finally, as in \autoref{predomainuniversality}, we see that $\mathbf{d}^\bullet_\circ$-completions are unique.  Indeed, the following is saying that $\mathcal{I}_0^\mathbf{d}(B)$ is universal among $\mathbf{d}^\bullet_\circ$-predomain extensions of $B$, and unique among $\mathbf{d}^\bullet_\circ$-domain extensions, up to isometry (and $\mathbf{d}$-equivalence).

\begin{thm}\label{toppredomainuniversality}
If $\mathbf{d}$ is a distance and $B$ is a $\mathbf{d}^\bullet_\circ$-basis of $\mathbf{d}^\bullet_\circ$-predomain $X$,
\begin{equation}\label{topisocomp}
x\mapsto x_0\cap B_+
\end{equation}
is an isometry to $\mathbf{d}_+^\mathcal{H}{}^\bullet_\circ$-domain $\mathcal{I}_0^\mathbf{d}(B)$, which is onto iff $X$ is a $\mathbf{d}^\bullet_\circ$-domain.
\end{thm}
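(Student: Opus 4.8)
The plan is to reduce to \autoref{toppredomaincompletion} and then imitate the proof of \autoref{predomainuniversality}, with $\mathcal{P}_0^\mathbf{d}$ in place of $\mathcal{P}^\mathbf{d}$. First I would promote $B$ and $B_+$ to bases of the relevant objects. Since $B$ is a $\mathbf{d}^\bullet_\circ$-basis of $X$, the proof of \autoref{reflexbasis} gives $\mathbf{d}\circ B\circ\mathbf{d}\leq\mathbf{d}$, and a short computation using \autoref{Bfunc} turns this into $\mathbf{d}_+\circ B_+\circ\mathbf{d}_+=(\mathbf{d}\circ B\circ\mathbf{d})_+\leq\mathbf{d}_+$; since $X_+$ is $\mathbf{d}_+{}^\bullet_\circ$-continuous by \eqref{X+cts}, the basis criterion (the Proposition preceding \autoref{reflexbasis}, applied in $X_+$) shows $B_+$ is a $\mathbf{d}_+{}^\bullet_\circ$-basis of $X_+$, so \autoref{reflexbasis} yields that the upper and lower hemimetrics of $\mathbf{d}_+$ restricted to $B_+$ agree with the restrictions of $\overline{\mathbf{d}_+}$ and $\underline{\mathbf{d}_+}$. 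I would also observe that $B$ itself is $\mathbf{d}|_B{}^\bullet_\circ$-continuous, since a $\mathbf{d}$-Cauchy net in $B$ witnessing the basis property for a point of $B$ converges in the Smyth topology of $X$, hence in the coarser $\mathbf{d}|_B{}^\bullet_\circ$-topology on $B$.

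Next I would apply \autoref{toppredomaincompletion} to $(B,\mathbf{d}|_B)$; since the reverse Hausdorff distance of $(\mathbf{d}|_B)_+$ is just $\mathbf{d}_+^\mathcal{H}$ restricted to $\mathcal{P}(B_+)$, this makes $\mathcal{P}_0^\mathbf{d}(B)$ a $\mathbf{d}_+^\mathcal{H}{}^\bullet_\circ$-domain with basis $(x_0\cap B_+)_{x\in B}$ and $(x_0\cap B_+)\mathbf{d}_+^\mathcal{H}(y_0\cap B_+)\leq x\mathbf{d}y$ for $x,y\in B$. As in \autoref{predomainuniversality}, every element of $\mathcal{P}_0^\mathbf{d}(B)$ is $\mathbf{d}_+^\mathcal{H}$-equivalent to its $\overline{\mathbf{d}_+}^\bullet$-closure in $B_+$, which by \eqref{alphatri} still has zero aperture and so lies in $\mathcal{I}_0^\mathbf{d}(B)$; hence $\mathcal{I}_0^\mathbf{d}(B)$ is again a $\mathbf{d}_+^\mathcal{H}{}^\bullet_\circ$-domain.

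Then I would check well-definedness and the isometry. For $x\in X$ the set $x_0\cap B_+$ is $\mathbf{d}_+$-directed (from $\mathcal{F}\mathbf{d}\circ B\circ\Phi^\mathbf{d}\leq\mathcal{F}\mathbf{d}$, exactly as in \autoref{dbhcont}), has zero aperture because $\inf_{b\in B}b\mathbf{d}x=0$ (a basis net for $x$ is $\mathbf{d}$-Cauchy and $\mathbf{d}^\bullet_\circ$-converges to $x$), and is $\overline{\mathbf{d}_+}^\bullet$-closed in $B_+$ since, the closure being computed with the restriction of $\overline{\mathbf{d}_+}$, one has $(c',u')\overline{\mathbf{d}_+}(c,u)\geq(c'\mathbf{d}x-u')_+$ for every $(c,u)\in x_0\cap B_+$ (testing the defining supremum at $(x,0)\in X_+$); so $x\mapsto x_0\cap B_+$ maps into $\mathcal{I}_0^\mathbf{d}(B)$. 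For the isometry, I would show $x_0\cap B_+$ and $x_0$ have the same $\mathbf{d}_+$-maximum $(x,0)$ — the upper-bound half is immediate and the rest follows from the basis condition $\mathbf{d}_+\circ B_+\circ\Phi^{\mathbf{d}_+}\leq\mathbf{d}_+$ — whence they are $\mathbf{d}_+^\mathcal{H}$-equivalent by the directed-supremum identities \cite[(9.2) and (10.4)]{Bice2019a}; then $(x_0\cap B_+)\mathbf{d}_+^\mathcal{H}(y_0\cap B_+)=x_0\mathbf{d}_+^\mathcal{H}y_0$, which equals $x\mathbf{d}y$ exactly as in \autoref{toppdcomp} \eqref{toppdcompp}$\Rightarrow$\eqref{toppdcompb} (combine \eqref{topdHxdy} with \eqref{dHxdy=}, using $\overline{\mathbf{d}}\leq\underline{\mathbf{d}}$ to pass to $\overline{\mathbf{d}_+}\leq\underline{\mathbf{d}_+}$). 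Alternatively one can verify the two inequalities directly, ``$\leq$'' by choosing $c\in B$ near $y$ via $\mathbf{d}\circ B\circ\mathbf{d}\leq\mathbf{d}$ and ``$\geq$'' via $\mathbf{d}_+^\mathcal{H}\geq\underline{\mathbf{d}_+}^\mathcal{H}$ and a basis net approximating $x$ from below.

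Finally, the surjectivity criterion runs as in \autoref{predomainuniversality}: a given $Y\in\mathcal{I}_0^\mathbf{d}(B)$, being $\mathbf{d}_+$-directed of zero aperture, produces (as in the alternative proof of \eqref{Bcomp}) a $\mathbf{d}$-Cauchy net $(b_\lambda)\subseteq B$ with $(b_\lambda,r_\lambda)\subseteq Y\leq^{\mathbf{d}_+}(b_\lambda,r_\lambda)$ and $r_\lambda\to0$; if $X$ is $\mathbf{d}^\bullet_\circ$-complete this net has a Smyth limit $x$, one checks $(x,0)$ is the $\mathbf{d}_+$-maximum of both $Y$ and $x_0\cap B_+$, and as in \autoref{predomainuniversality} this forces $x_0\cap B_+=Y$; conversely, if the map is onto then $\mathcal{I}_0^\mathbf{d}(B)$ being $\mathbf{d}_+^\mathcal{H}{}^\bullet_\circ$-complete forces $X$ to be $\mathbf{d}^\bullet_\circ$-complete, hence a $\mathbf{d}^\bullet_\circ$-domain. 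I expect this surjectivity step to be the main obstacle: extracting the \emph{reverse} inclusion $x_0\cap B_+\subseteq Y$ from ``$(x,0)$ is the $\mathbf{d}_+$-maximum of both sets'' requires the delicate interplay between $\overline{\mathbf{d}_+}^\bullet$-closedness of $Y$, the double role of the basis net ($\mathbf{d}$-converging to $x$ while staying cofinal in $Y$), and the infimum-exchange identities of \cite{Bice2019a} that already underpin \autoref{predomainuniversality}.
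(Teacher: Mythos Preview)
Your proposal is correct and follows essentially the same route as the paper: apply \autoref{toppredomaincompletion} to $B$, pass from $\mathcal{P}_0^\mathbf{d}(B)$ to $\mathcal{I}_0^\mathbf{d}(B)$ via $\overline{\mathbf{d}_+}^\bullet$-closure, verify the isometry using the predomain condition $\overline{\mathbf{d}}\leq\underline{\mathbf{d}}$ exactly as in \autoref{toppdcomp}, and handle surjectivity by representing ideals via $\mathbf{d}$-Cauchy nets. You supply considerably more detail on intermediate steps (that $B_+$ is a $\mathbf{d}_+{}^\bullet_\circ$-basis, that $x_0\cap B_+$ is genuinely an ideal of zero aperture, that $x_0\cap B_+$ and $x_0$ are $\mathbf{d}_+^\mathcal{H}$-equivalent) which the paper simply asserts.

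Your closing worry about the reverse inclusion $x_0\cap B_+\subseteq Y$ is unfounded. The paper sidesteps it by the chain of equivalences
\[
x_\lambda\barrowc x\ \Leftrightarrow\ \mathbf{d}_+(x,0)=\mathbf{d}_+I\ \Leftrightarrow\ x_0\cap B_+=I,
\]
where the last equivalence holds because an $\overline{\mathbf{d}_+}^\bullet$-closed $\mathbf{d}_+$-directed $I\subseteq B_+$ is recovered from $\mathbf{d}_+I$ as $\{(b,r)\in B_+:(b,r)\mathbf{d}_+I=0\}$ (this is the content of \cite[Proposition~9.10]{Bice2019a}). In your formulation the same fact does the work: once $(x,0)$ is a $\mathbf{d}_+$-maximum of both $Y$ and $x_0\cap B_+$, both sets equal $\{(b,r)\in B_+:(b,r)\mathbf{d}_+(x,0)=0\}$ by $\overline{\mathbf{d}_+}^\bullet$-closedness, so no further ``delicate interplay'' is required.
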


\begin{proof}
As $B$ is a $\mathbf{d}^\bullet_\circ$-basis, $x_0\cap B_+\in\mathcal{I}_0^\mathbf{d}(B)$, for all $x\in X$.  Every $Y\in\mathcal{P}_0^\mathbf{d}(B)$ is $\mathbf{d}_+^\mathcal{H}$-equivalent to $I_Y=\overline{\mathbf{d}}_+^\bullet$-$\mathrm{cl}(Y)\in\mathcal{I}_0^\mathbf{d}(B)$ so, by \autoref{toppredomaincompletion}, $\mathcal{I}_0^\mathbf{d}(B)$ is a $\mathbf{d}_+^\mathcal{H}{}^\bullet_\circ$-domain and
\[(x_0\cap B_+)\mathbf{d}_+^\mathcal{H}(y_0\cap B_+)\ =\ x\mathbf{d}y,\]
i.e. \eqref{topisocomp} is an isometry.  Also, like in \eqref{Cideal}, for any $\mathbf{d}$-Cauchy $(x_\lambda)\subseteq X$, take
\[I=\{(x,r):x\in B\text{ and }x\mathbf{d}(x_\lambda)\leq r\},\]
so $I\in\mathcal{I}_0^\mathbf{d}(B)$ (and every $I\in \mathcal{I}_0^\mathbf{d}(B)$ is of this form).  Also, as $B$ is a $\mathbf{d}^\bullet_\circ$-basis,
\[x_\lambda\barrowc x\quad\Leftrightarrow\quad\text{$(x,0)=\mathbf{d}_+$-$\max I$}\quad\Leftrightarrow\quad\mathbf{d}_+(x,0)=\mathbf{d}I\quad\Leftrightarrow\quad(x_0\cap B_+)=I,\]
so \eqref{topisocomp} is onto iff $X$ is $\mathbf{d}{}^\bullet_\circ$-complete and hence a $\mathbf{d}^\bullet_\circ$-domain.
\end{proof}

\bibliography{maths}{}
\bibliographystyle{alphaurl}

\end{document}